%%%%%%%% ICML 2021 EXAMPLE LATEX SUBMISSION FILE %%%%%%%%%%%%%%%%%

\documentclass{article}

% Recommended, but optional, packages for figures and better typesetting:
\usepackage{microtype}
\usepackage{graphicx}
\usepackage{subfigure}
\usepackage{booktabs} % for professional tables

% hyperref makes hyperlinks in the resulting PDF.
% If your build breaks (sometimes temporarily if a hyperlink spans a page)
% please comment out the following usepackage line and replace
% \usepackage{icml2021} with \usepackage[nohyperref]{icml2021} above.
\usepackage{hyperref}

% Attempt to make hyperref and algorithmic work together better:

% Use the following line for the initial blind version submitted for review:
\usepackage[accepted]{icml2023_off}

%\usepackage{natbib}
% \usepackage{amsaddr}

% For theorems and such
\usepackage{amsmath}
\usepackage{amssymb}
\usepackage{mathtools}
\usepackage{amsthm}

% if you use cleveref..
\usepackage[capitalize,noabbrev]{cleveref}

\usepackage{url}            % simple URL typesetting
\usepackage{booktabs}       % professional-quality tables
\usepackage{amsfonts}       % blackboard math symbols
\usepackage{nicefrac}       % compact symbols for 1/2, etc.
\usepackage{microtype}      % microtypography

\theoremstyle{plain}

\usepackage{color}
\usepackage{amssymb}
\usepackage{amsthm}
\usepackage{graphicx}
\usepackage{dsfont}

\usepackage{subfigure}

\usepackage[utf8]{inputenc} % allow utf-8 input
\usepackage[T1]{fontenc}    % use 8-bit T1 fonts
\usepackage{url}            % simple URL typesetting
\usepackage{booktabs}       % professional-quality tables
\usepackage{amsfonts}       % blackboard math symbols
\usepackage{nicefrac}       % compact symbols for 1/2, etc.
\usepackage{microtype}      % microtypography
\usepackage{xcolor}         % colors
\usepackage{graphicx}
\usepackage{subfigure}
\usepackage{dsfont}

\usepackage{todonotes}

\usepackage{amsmath,amsthm,amsfonts, amssymb}

\usepackage{threeparttable}

% If you use BibTeX in apalike style, activate the following line:
%\bibliographystyle{apalike}

\usepackage{amsfonts}
\usepackage{amsthm}
\usepackage{amsmath}
\usepackage{amssymb}
\usepackage{mathrsfs}
\usepackage{amscd}
\usepackage{caption}
\usepackage{indentfirst}
\usepackage{cleveref}

\usepackage{graphicx}
\usepackage{subfigure}

\providecommand{\ee}{\mathbf{e}}

\renewcommand{\gg}{\mathbf{g}}
\providecommand{\GG}{\mathbf{G}}
\providecommand{\hh}{\mathbf{h}}

\providecommand{\xx}{\mathbf{x}}
\providecommand{\yy}{\mathbf{y}}

\newcommand{\E}{\mathbb{E}}

\newcommand{\cO}{\mathcal O}

\newcommand{\R}{\mathbb R}

\newcommand{\cE}{\mathcal{E}}

\def\<#1,#2>{\langle #1,#2\rangle}

% \definecolor{myblue}{RGB}{55,126,184}
% \newcommand{\bluebox}[1]{\colorbox{myblue!30}{#1}}

\usepackage{dsfont}

\renewcommand{\leq}{\leqslant}
\renewcommand{\geq}{\geqslant}

\def\<{\langle}
\def\>{\rangle}
\def\|{\Vert}

% \definecolor{NavyBlue}{rgb}{0.1,0.1,0.6}

\def\eps{\varepsilon}
\def\var{{\rm var\,}}

\newcommand{\esp}[1]{\mathbb{E}\left[#1\right]}

\newcommand{\NRM}[1]{{{\left\| #1\right\|}}} % ||1||% Operators
\newcommand{\set}[1]{{{\left\{ #1\right\}}}} % ||1||% Operators
\newcommand{\proba}[1]{\mathbb{P}\left(#1\right)}

\newcommand{\cB}{\mathcal{B}}
\newcommand{\cN}{\mathcal{N}}

\newcommand{\cM}{\mathcal{M}}
\newcommand{\cV}{\mathcal{V}}

\newcommand{\dd}{{\rm d}}
\newcommand{\cF}{\mathcal{F}}

\newcommand{\N}{\mathbb{N}}
\newcommand{\cD}{\mathcal{D}}
\newcommand{\one}{\mathds{1}}

% from Radu

%\hypersetup{colorlinks=true, urlcolor= blue, linkcolor=blue, citecolor=blue}

\usepackage{amsfonts}
\usepackage{amsthm}
\usepackage{amsmath}
\usepackage{amssymb}
\usepackage{mathrsfs}
\usepackage{amscd}
\usepackage{caption}
\usepackage{indentfirst}
%\usepackage{cleveref}

% Use \Name{Author Name} to specify the name.
% If the surname contains spaces, enclose the surname
% in braces, e.g. \Name{John {Smith Jones}} similarly
% if the name has a "von" part, e.g \Name{Jane {de Winter}}.
% If the first letter in the forenames is a diacritic
% enclose the diacritic in braces, e.g. \Name{{\'E}louise Smith}

% Two authors with the same address
% \coltauthor{\Name{Author Name1} \Email{abc@sample.com}\and
%  \Name{Author Name2} \Email{xyz@sample.com}\\
%  \addr Address}

% Three or more authors with the same address:
% \coltauthor{\Name{Author Name1} \Email{an1@sample.com}\\
%  \Name{Author Name2} \Email{an2@sample.com}\\
%  \Name{Author Name3} \Email{an3@sample.com}\\
%  \addr Address}

% Authors with different addresses:

\newcommand{\edgeuv}{{\{u,v\}}}
\newcommand{\edgevw}{{\{v,w\}}}

\newcommand{\vtow}{{v\to w}}

\newcommand{\mix}{{\rm mix}}
\newcommand{\hit}{{\rm hit}}
\newcommand{\cov}{{\rm cov}}
\newcommand{\TV}{{\rm TV}}

\newtheorem{theorem}{Theorem}
\newtheorem{assumption}{Assumption}
\newtheorem{definition}{Definition}
\newtheorem{lemma}{Lemma}
\newtheorem{proposition}{Proposition}
\newtheorem{remark}{Remark}
\newtheorem{corollary}{Corollary}
% \crefname{assumption}{assumption}{assumptions}

\makeatletter

% If accepted, instead use the following line for the camera-ready submission:
%\usepackage[accepted]{icml2021}

% The \icmltitle you define below is probably too long as a header.
% Therefore, a short form for the running title is supplied here:

\icmltitlerunning{Stochastic Gradient Descent under Markovian Sampling Schemes}

\begin{document}

\twocolumn[
\icmltitle{Stochastic Gradient Descent under Markov-Chain Sampling Schemes}

% It is OKAY to include author information, even for blind
% submissions: the style file will automatically remove it for you
% unless you've provided the [accepted] option to the icml2021
% package.

% List of affiliations: The first argument should be a (short)
% identifier you will use later to specify author affiliations
% Academic affiliations should list Department, University, City, Region, Country
% Industry affiliations should list Company, City, Region, Country

% You can specify symbols, otherwise they are numbered in order.
% Ideally, you should not use this facility. Affiliations will be numbered
% in order of appearance and this is the preferred way.
\icmlsetsymbol{equal}{*}

\begin{icmlauthorlist}
\icmlauthor{Mathieu Even}{in}
\end{icmlauthorlist}

\icmlaffiliation{in}{Inria - ENS Paris}
% \icmlaffiliation{goo}{Googol ShallowMind, New London, Michigan, USA}
% \icmlaffiliation{ed}{School of Computation, University of Edenborrow, Edenborrow, United Kingdom}

\icmlcorrespondingauthor{Mathieu Even}{mathieu.even@inria.fr}
%\icmlcorrespondingauthor{Eee Pppp}{ep@eden.co.uk}

% You may provide any keywords that you
% find helpful for describing your paper; these are used to populate
% the "keywords" metadata in the PDF but will not be shown in the document
\icmlkeywords{Optimization, random walks, decentralized, distributed, convex, stochastic, markov chain}

\vskip 0.3in
]

% this must go after the closing bracket ] following \twocolumn[ ...

% This command actually creates the footnote in the first column
% listing the affiliations and the copyright notice.
% The command takes one argument, which is text to display at the start of the footnote.
% The \icmlEqualContribution command is standard text for equal contribution.
% Remove it (just {}) if you do not need this facility.

%
\printAffiliationsAndNotice{}  % leave blank if no need to mention equal contribution
%\printAffiliationsAndNotice{\icmlEqualContribution} % otherwise use the standard text.

\begin{abstract} 
We study a variation of vanilla stochastic gradient descent where the optimizer only has access to a ‘‘Markovian sampling scheme''. These schemes encompass applications that range from decentralized optimization with a random walker (\emph{token algorithms}), to RL and online system identification problems. We focus on obtaining rates of convergence under the least restrictive assumptions possible on the underlying Markov chain and on the functions optimized. We first unveil the theoretical lower bound for methods that sample stochastic gradients along the path of a Markov chain, making appear a dependency in the hitting time of the underlying Markov chain. We then study \emph{Markov chain SGD} (MC-SGD) under much milder regularity assumptions than prior works. We finally introduce MC-SAG, an alternative to MC-SGD with variance reduction, that only depends on the hitting time of the Markov chain, therefore obtaining a communication-efficient token algorithm.
\end{abstract}

\section{Introduction}

In this paper, we consider a stochastic optimization problem that takes root in decentralized optimization, estimation problems, and Reinforcement Learning.
Consider a function $f$ defined as:
\begin{equation}\label{eq:f_gen}
    f(\xx)=\E_{v\sim\pi}\left[ f_v(\xx)\right]\,,\quad \xx\in\R^d\,,
\end{equation}
where $\pi$ is a probability distribution over a set $\cV$, and $f_v$ are smooth functions on $\R^d$ for all $v$ in~$\cV$.
%We focus on the smooth-non convex setting, and therefore, as is common in the literature, we aim at finding approximate stationary points for $f$.
Classicaly, this represents the loss of a model parameterized by $\xx$ on data parameterized by $v$.
If \emph{i.i.d.}~samples $(v_t)_{t\geq0}$ of law $\pi$ and their corresponding gradient estimates $(\nabla f_{v_t})$ were accessible, one could directly apply SGD-like algorithms, that have proved to be efficient in large scale machine learning problems \citep{Bottou2018}.
We however consider in this paper a different setting: we assume the existence of a Markov chain $(v_t)$ of state space $\cV$ and stationary distribution $\pi$.
The optimizer may then use biased stochastic gradients along the path of this Markov chain to perform incremental updates. 
She may for instance use the \emph{Markov chain SGD} (MC-SGD) algorithm, defined through the following recursion:
\begin{equation}\label{eq:mcsgd}
    \xx_{t+1}=\xx_t-\gamma \nabla f_{v_t}(\xx_t)\,.
\end{equation}
Being ‘‘ergodically unbiased'', such iterates should behave closely to those of vanilla SGD.
The analysis is however notoriously difficult, since in~\eqref{eq:mcsgd}, variable $\xx_t$ and the current state of the Markov chain $v_t$ are not independent, so that $\esp{\nabla f_{v_t}(\xx_t)|\xx_t}$ can be arbitrarily far from $\nabla f(\xx_t)$.
This paper focuses on analyzing algorithms that incrementally sample stochastic gradients alongside the Markov chain $(v_t)$, motivated by the following applications.% that give rise to different types of results and algorithms.

\subsection{Token algorithms}
Traditional machine learning optimization algorithms require data centralization, raising scalability and privavy issues, hence the alternative of Federated Learning, where users' data is held on device, and the training is orchestrated at a server level.
Decentralized optimization goes further, by removing the dependency over a central entity, leading to increased scalability, privacy and robustness to node failures, broadening the range of applications to IoT (Internet of Things) networks.
In decentralized optimization, users (or agents) are represented as nodes of a connected graph $G=(\cV,\cE)$ over a finite set of users $\cV$ (of cardinality $n$). 
The problem considered is then the minimization of
\begin{equation}\label{eq:f}
    f(\xx)=\frac{1}{n}\sum_{v\in\cV}f_v(\xx)\,,\quad \xx\in\R^d\,,
\end{equation} 
where each $f_v$ is locally held by user $v\in\cV$, using only communications between neighboring agents in the graph.
%In this paper, we consider the non-convex and smooth case.
There are several known decentralized algorithmic approaches to minimize $f$ under these constrains. 
The prominent one consists in alternating between communications using gossip matrices \citep{boyd2006gossip,dimakis2010synchgossip} and local gradient computations, until a consensus is reached.
These gossip approaches suffer from a high synchronization cost (nodes in the graph are required to perform simultaneous operations, or to be aware of operations at the other end of the communication graph) that can be prohibitive if we aim at removing the dependency on a centralized orchestrator.
Further, a high number of communications are required to reach consensus, whether all nodes in the graph (as in synchronous gossip) or only two neighboring ones (as in randomized gossip) communicate at each iteration.
To alleviate these communication burdens, based on the original works of \citet{lopes2007incremental,johansson2007simple,johansson2010randomizedincremental}, we study algorithms based on Markov chain SGD: a variable $\xx$ performs a random walk on graph $G$, and is incrementally updated at each step of the random walk, using the local function available at its location.
This approach thus boils down to the one presented above with the function defined in \eqref{eq:f_gen}, where $\cV$ is the (finite) set of agents, $\pi$ is the uniform distribution over $\cV$, and $(v_t)$ is the Markov chain consisting of the consecutive states of the random walk performed on graph $G$.
The random walk guarantees that every communications are spent on updating the global model, as opposed to gossip-based algorithms, where communications are used to reach a running consensus while locally performing gradient steps.

These algorithms are referred to as \emph{token algorithms}: a token (that represents the model estimate) randomly walks the graph and performs updates during its walk.
There are two directions to design and analyze token algorithms. 
\citet{johansson2007simple} designed and analyzed its algorithm using, based on SGD with subdifferentials and a Markov chain sampling (consisting of the random walk).   
Following works \citep{duchi2012ergodic,tao2018mcsgd} tried to improve convergence guarantees of such stochastic gradient algorithms with Markov chain sampling, under various scenarii (mirror SGD \emph{e.g.}).
However, all these analyses rely on overly strong assumption: bounded gradients and/or bounded domains are assumed, and the rates obtained are of the form $\tau_\mix/T+\sqrt{\tau_\mix/T}$ for a number $T$ of steps, where $\tau_\mix$ is the mixing time of the underlying Markov chain.
More recently, \citet{pmlr-v162-dorfman22a} obtained similar rates under similar assumptions (bounded losses and gradients), but without requiring any prior knowledge of $\tau_\mix$, using adaptive stepsizes.

A more recent approach consists in deriving token algorithms from Lagrangian duality and from variants of coordinate gradient methods or ADMM algorithms with Markov chain sampling. \citet{walkman} introduce the \emph{Walkman} algorithm, whose analysis works on any graph, and obtain rates of $\frac{\tau_\mix^2 n}{T}$ to reach approximate-stationary points, while \citet{hendrikx2022tokens} introduced a more general framework, but whose analysis only works on the complete graph (and is thus equivalent to an \emph{i.i.d.} sampling). 
Yet, \citet{hendrikx2022tokens} extend their analysis to arbitrary graph, by performing gradient updates every $\tau_\mix$ steps of the random walk, obtaining a a dependency on $\tau_\mix n$, making their algorithm state of the art for these problems. 
Altenatively, \citet{wang2022stability} studies the algorithm stability of MC-SGD in order to derive generalization upper-bounds for this algorithm, and \citet{pmlr-v162-sun22b} provides and studies adaptive token algorithms.
Recently, and concurrently to this work, \citet{doan2023mcsgd} also studies MC-SGD without smoothness; however, their dependency on the mixing time of the random walk (in their Theorem 1) scales as $\exp(c \tau_{\mix})$: this is prohibitive as soon as the mixing time becomes larger than $\cO(1)$.

In summary, current token algorithms and their analyses either rely on strong noise and regularity assumptions (\emph{e.g.} bounded gradients), or suffer from an overly strong dependency on Markov chain-related quantities (as in \cite{walkman,hendrikx2022tokens}).

The token algorithms we consider are to be put in contrast with \emph{consensus-based decentralized} algorithms, or \emph{gossip} algorithms (with fixed gossip matrices \citep{dimakis2010synchgossip} or with randomized pairwise communications \citep{boyd2006gossip}). They originally were introduced to compute the global average of local vectors through peer-to-peer communication. Among the classical decentralized optimization algorithms, some alternate between gossip communications and local steps \citep{nedic2008distributed,koloskova2019decentralized,pmlr-v119-koloskova20a}, others use dual formulations and formulate the consensus constraint using gossip matrices to obtain decentralized dual or primal-dual algorithms \citep{scaman17optimal,hendrikx2019accelerated,even2021continuized,kovalev2021adom, alghunaim2019primaldual}, and benefit from natural privacy amplification mechanisms \citep{cyffers2022muffliato}.  Other approaches include non-symetric communication matrices \citep{Assran_2021} that are more scalable. We refer the reader to~\citet{nedic2018network} for a broader survey on decentralized optimization.  The works we relate to in this line of research are \citet{pmlr-v119-koloskova20a}, where a unified analysis of decentralized SGD is performed (the ‘‘gossip equivalent'' of our algorithm MC-SGD), and in particular contains rates for convex-non-smooth functions, and \citet{lian2019decentralizedmomemtum}, that performs an analysis of decentralized SGD with momentum in the smooth-non-convex case, which is the ‘‘gossip equivalent'' of our algorithm MC-SAG.

\subsection{Reinforcement Learning problems and online system identification}
In several applications (RL, time-series analysis \emph{e.g.}), a statistician may have access to values $(X_t)_{t\geq0}$ generated sequentially along the path of a Markov chain, observations from which she wishes to estimate a parameter For instance, \citet{kowshik2021streaming} consider a sequence of observations  $X_{t+1}=A_\star X_t+\xi_t$ for $\xi_t$ \emph{i.i.d.} centered noise, and $A_\star$ to estimate, and aim at finding $\hat A$ minimizing the MSE $\esp{\NRM{\sum_{t<T}(\hat{A}-A_\star)X}}$ where $X\sim\pi$ is the stationary distribution. 
Studying this problem under the lens of stochastic optimization, this boils down to building efficient strategis for SGD under Markov chain sampling, beyond the case of linear mean-squared regressions studied in \cite{kowshik2021streaming}.
While optimal offline policies have extensively been studied in this setting \citep{proutiere_sample,pmlr-v75-simchowitz18a_learning_without_mixing}, online algorithms that take the form of SGD-like algorithms have received little attention, and only focus on the case of quadratic losses with Markov chain as described above.
However, these analyses only focus on least squares and Markov chains of the form $X_{t+1}=A_\star X_t+\eta_t$. Under these specific assumptions, \citet{nagaraj_least_squares_markovian} prove that a dependency on $\tau_\mix$ for MC-SGD is inevitable, while using reverse-experince replay, \citet{kowshik2021streaming} breaks this and obtain sample-optimal online algorithms. Their algorithm however require to store a number of iterates that grow linearly with $\tau_\mix$.

The convergence guarantees we prove in the sequel for SGD under Markov chain sampling fit in this online framework, and refine previous analyses by removing strong regularity assumptions such as bounded iterates or bounded gradients \citep{tao2018mcsgd}, or strong assumptions on the Markovian structure data and least-squares problems \citep{nagaraj_least_squares_markovian,kowshik2021streaming}.

Finally, note that in our setting, the iterates of the algorithms considered (denoted as $(\xx_k)_{k\geq0}$) and the Markov chain $(v_k)_{k\geq 0}$ are dependent of each other. More precisely, $(v_k)_{k\geq0}$ is a Markov chain whose states do not depend on the iterate sequence, while $\xx_k$ is $(v_\ell)_{\ell\leq k-1}$-measurable. 
This setting is sometimes referred to as \emph{exogenous Markov noise} \cite{rust1986exogenous}.
Another line of works, pioneered by \cite{Benveniste1990}, considers Markov transitions for $(v_k)$ where  $v_{k+1}|v_k$ is sampled using a Markov transition kernel $P_{\xx_0,\ldots,\xx_{k-1}}$ that is directly linked to the iterates.
This orthogonal line of work of stochastic approximation with Markovian noise is related to sampling (through the MCMC algorithm), adaptive filtering, and other related problems that involve exploration \cite{Brown1985,Andrieu2005,Andrieu2006,Fort2016,blanke2023flex}.
Our work aims at finding precise rates of convergence as in the convex or non-convex optimization literature \cite{bubeck2014convex,carmon2021lower}, under the mildest assumptions on the exogenous Markov-chain $(v_k)_{k\geq0}$.

Before unrolling our results, we start by reminding definitions and results related to Markov chain theory in Section~\ref{sec:mc}, before presenting our contributions in Section~\ref{sec:related_motiv}

\section{Markov chains preliminaries}\label{sec:mc}

We refer the interested reader to \citet{LevinPeresWilmer2006} for a thorough introduction to Markov chain theory.
%From now on, $\cV$ is assumed to be a finite state space, of cardinality $n$.
In this section, we define mixing, hitting and cover times for a Markov chain on a finite space set $\cV$ of cardinality $n$. However, note that these definitions can be extended to the more general setting where $\cV$ is infinite (either countable or not). 
We focus on Markov chain on finite state spaces, but note that the mixing time of a Markov chain can similarly be defined on inifinite state spaces (countable and continuous state spaces).
In this paper, all results that involve only the mixing time of the Markov chain (the results from Section~\ref{sec:mcsgd}) easily generalize to infinite state spaces.

\begin{definition}
    Let $P\in \R^{\cV\times\cV}$ be a stochastic matrix (\emph{i.e.}~$P_{v,w}\geq0$ for all $v,w\in\cV$ and $\sum_{w\in\cV}P_{v,w}=1$ for all $v\in\cV$).
    A time-homogeneous Markov chain on $\cV$ of transition matrix $P$ is a stochastic process $(X_t)_{t\geq 0}$ with values in $\cV$ such that, for any $t\geq 0$ and $w,v_0,\ldots,v_{t-1},v\in\cV$,
    \begin{equation*}
        \proba{X_{t+1}\!\!=w|X_t\!=\!v,X_{t-1}\!=\!v_{t-1},\ldots,X_0\!=\!v_0}=P_{v,w}\,.
    \end{equation*}
\end{definition}
A Markov chain of transition matrix $P$ is irreducible if, for any $v,w\in\cV$, there exists $t\geq0$ such that $(P^t)_{vw}>0$. A Markov chain of transition matrix $P$ is aperiodic if there exists $t_0>0$ such that for all $t\geq t_0$ and $v,w\in\cV$, $(P^t)_{v,w}>0$.
Any irreducible and aperiodic Markov chain on $\cV$ admits a stationary distribution $\pi$, that verifies $\pi P= \pi$. 
It finally holds that, if $P$ is reversible ($\pi_vP_{v,w}=\pi_wP_{w,v}$ for all $v,w\in\cV$), denoting as $\lambda_P=1-\max_{\lambda\in{\rm Sp}(P)\setminus \set{1}}|\lambda|>0$ the absolute spectral gap of $P$, where ${\rm Sp}(P)$ is the spectrum of $P$, for any stochastic vector $\pi_0\in\R^\cV$:
\footnote{we write $\NRM{z}_\pi^2=\sum_{v\in\cV}\pi_vz_v^2$ for $z\in\R^\cV$, and $\NRM{\cdot}$ always stands for the Euclidean norm}
\begin{equation*}
    \NRM{\pi_0 P^t -\pi}_\pi\leq (1-\lambda_P)^t\NRM{\pi_0-\pi}_\pi\,.
\end{equation*}
If the chain is not reversible, there is still a linear decay, but in terms of total variation distance rather than in the norm $\NRM{\cdot}_\pi$ (Chapter 4.3 of \citet{LevinPeresWilmer2006}).
In the sequel, $(v_t)_{t\geq0}$ is any irreducible aperiodic Markov chain of transition matrix $P$ on $\cV$ of stationary distribution $\pi$ (not necessarily the uniform distribution on $\cV$).

Furthermore, we define the graph $G=(\cV,\cE)$ over the state space $\cV$ through the relation $\edgevw\in\cE \iff P_{v,w}>0$ for $v$ and $w$ two distinct states.
Consequently, the Markov chain $(v_t)_t$ can also be seen as a random walk on graph $G$, with transition probability $P$.
In the random walk decentralized optimization case, this graph coincides with the communication graph.
In the sequel, for $t\geq0$ and $v\in\cV$, $\esp{\cdot|v_t=v}$ and $\proba{\cdot|v_t=v}$ respectively denote the expectation and probability conditioned on the event $v_t=v$.
Similarly for $\pi_t$ a probability distribution on $\cV$, $\esp{\cdot|v_t\sim\pi_t}$ and $\proba{\cdot|v_t\sim\pi_t}$ refers to conditioning on the law of $v_t$.

\begin{definition}[Mixing, hitting and cover times]\label{def:times}
    For $w\in\cV$, let $\tau_w=\inf\set{t\geq1\,|\, v_t=w}$ be the time the chain reaches $w$ (or returns to $w$, in the case $v_0=w$). We define the following quantities.
    \begin{enumerate}
        \item \emph{Mixing time.} For $\eps>0$, the mixing time $\tau_\mix(\eps)$ of $(v_t)$ is defined as, where $\dd_{\rm TV}$ is the total-variation distance:
        \begin{equation*}
            \tau_\mix(\eps)=\inf\set{t\geq1\,|\, \forall \pi_0\,, \dd_{\rm TV}(P^t\pi_0,\pi)\leq \eps}\,,
        \end{equation*}
        and we define the mixing time $\tau_\mix$ as $\tau_\mix=\tau_\mix(\pi_{\min}/2)$ 
        \footnote{this definition of mixing time is not standard: \citet{LevinPeresWilmer2006} define it as $\tau_\mix(1/4)$, \citet{walkman} define it as we do; however, as explained in Chapter 4.5 of \citet{LevinPeresWilmer2006}, these definitions are equivalent up to a factor $\ln(1/\pi_{\min})$}
        where $\pi_{\min}=\min_{v\in\cV}\pi_v$.
        \item \emph{Hitting and cover times.} The hitting time $\tau_\hit$ and cover time $\tau_\cov$ of $(v_t)$ are defined as:
        \begin{equation*}
            \begin{aligned}
                \tau_\hit&=\max_{(v,w)\in\cV^2} \esp{\tau_w|v_0=v}\,,\\
                \tau_\cov&=\max_{v\in\cV}\esp{\max_{w\in\cV}\tau_w|v_0=v}\,.
            \end{aligned}
        \end{equation*}
    \end{enumerate}
    The mixing time is the number of steps of the Markov chain required for the distribution of the current state to be close to the stationary probability $\pi$.
    Starting from any arbitrary $v_0$, the hitting time bounds the time it takes to reach any fixed $w$, while the cover time bounds the number of steps required to visit all the nodes in the graph.
\end{definition}

Note that if the chain is reversible, $\tau_\mix(\eps)$ is closely related to $\lambda_P$ through $\tau_\mix(\eps)\leq \lceil \lambda_P^{-1}\ln(\pi_{\min}^{-1}\eps^{-1})\rceil$. Under reversibility assumptions, we defined the \emph{relaxation time} of the Markov chain as $\tau_{\rm rel}=1/\lambda_P$.
More generally without reversibility, $\tau_\mix(\eps)\leq \lceil\log_2(1/\eps)\rceil \tau_\mix(1/4)$.
Then, as we prove in Appendix~\ref{app:basic}, $\tau_\hit$ always satisfies $\tau_\hit\leq 2\pi_{\min}^{-1}\tau_\mix$.
Finally, using \citet{matthews-covering}' method (detailed in Chapter 11.4 of \citet{LevinPeresWilmer2006}), we have $\tau_\cov\leq \ln(n)\tau_\hit$.

\section{%Related works discussion and
Contributions}\label{sec:related_motiv}

%\subsection{Related works}

In our paper, we analyze theoretically stochastic gradient methods with Markov chain sampling (such as MC-SGD in Equation~\eqref{eq:mcsgd}), and aim at deriving complexity bounds under the mildest assumptions possible.
We first derive in Section~\ref{sec:lower} complexity lower bounds for such methods, making appear $\tau_\hit$ as the Markov chain quantity that slows down such algorithms.

We then study MC-SGD under various regularity assumptions in Section~\ref{sec:mcsgd}: we remove the bounded gradient assumption of all previous analyses, obtain rates under a $\mu$-PL assumption, and prove a linear convergence in the interpolation regime, where noise and function dissimilarities only need to be bounded at the optimum.

In the data-heterogeneous setting (functions $f_v$ that can be arbitrarily dissimilar) and in the case where $\cV$ (the state space of the Markov chain) is finite, we introduce MC-SAG in Section~\ref{sec:mcSAG}, a variance-reduced alternative to MC-SGD, that is perfectly suited to decentralized optimization. Using time adaptive stepsizes, this algorithm has a rate of convergence of $\tau_\hit/T$ and thus matches that of our lower bound, up to acceleration.

We discuss in Section~\ref{sec:discussion} the implications of our results. 
In particular, we prove that random-walk based decentralization is more communication efficient than consensus-based approaches; prior to our analysis, this was only shown empirically \citep{walkman,johansson2010randomizedincremental}.
Further, our results formally prove that using all gradients along the Markov chain trajectory leads to faster rates; as in the previous case, this was only empirically observed before \citep{tao2018mcsgd}.
These two consequences are derived from the fact that MC-SAG depends only on $\tau_\hit$ rather than the traditionally used quantity $n\tau_\mix$, that can be arbitrarily bigger (Table~\ref{tb:times}).

\section{Oracle complexity lower bounds under Markov chain sampling
%Optimal convergence rates for SGD with Markov chain sampling
}\label{sec:lower}

In this section, we provide oracle complexity lower bounds for finding stationary points of the function $f$ defined in~\eqref{eq:f}, for a class of algorithms that satisfy a ‘‘Markov sampling scheme''.
For a given Markov chain $(v_t)$ on $\cV$, we consider algorithms verifying the following procedural constraints, for some fixed initialization $\cM_0=\set{\xx_0}$ an then for $t\geq0$,
\begin{enumerate}
    \item A iteration $t$, the algorithm has access to function $f_{v_t}$ and may extend its memory:
    \begin{equation*}
        \cM_{t+1}={\rm Span}(\set{\xx\,,\,\nabla f_{v_t}(\xx)\,,\quad \xx\in\cM_{t}})\,.
    \end{equation*}
    \item \textbf{Output:} the algorithm specifies an output value $\xx_t\in\cM_t$.
\end{enumerate}
We call algorithms verifying such constraints ‘‘black box procedures with Markov sampling~$(v_t)$''.
Such procedures as well as the result below are inspired by the distributed black-box procedures defined in~\citet{scaman17optimal}.
We use the notation $a(\cdot)=\Omega(b(\cdot))$ for $\exists C>0$ such that $a(\cdot)\geq C b(\cdot)$ in the theorem below, and classically consider the limiting situation $d\to\infty$, by assuming we are working in $\ell_2=\set{(\theta_k)_{k\in\N}\in\R^\N:\sum_k\theta_k^2<\infty}$.
\begin{theorem}\label{thm:lower}
    Assume that $\tau_v$ (see Definition \ref{def:times}) has finite second moment for any $v\in\cV$.
	Let $\Delta,B>0$, $L>0$ and $\mu>0$, denote $\kappa=L/\mu$.
    Let $\xx_0$ be fixed.
	\begin{enumerate}
		\item \emph{Non-convex lower bound:} there exist functions $(f_v)_{v\in\cV}$ such that $f=\sum_{v\in\cV}\pi_v f_v$ is $L$-smooth, and $f(\xx_0)-\min_\xx f(\xx)\leq \Delta$ and such that for any $T$ and any Markov black-box algorithm that outputs $\xx_T$ after $T$ steps, we have:
		\begin{equation*}
			\NRM{\nabla f(\xx_T)}^2= \Omega\left(L\Delta\left(\frac{\tau_{\rm hit}}{T}\right)^2\right)\,.
		\end{equation*}
		\item \emph{Convex lower bound:} there exist functions $(f_v)_{v\in\cV}$ such that $f=\sum_{v\in\cV}\pi_v f_v$ is convex and $L$-smooth and minimized at some $\xx^\star$ that verifies $\NRM{\xx^0-\xx^\star}^2\leq B^2$, and such that for any $T$ and any Markov black-box algorithm that outputs $\xx_T$ after $T$ steps, we have:
		\begin{equation*}
			f(\xx_T)-f(\xx^\star)= \Omega\left(LB^2\left(\frac{\tau_{\rm hit}}{T}\right)^2\right)\,.
		\end{equation*}
		\item \emph{Strongly convex lower bound:} there exist functions $(f_v)_{v\in\cV}$ such that $f=\sum_{v\in\cV}\pi_v f_v$ is $\mu$-strongly convex and $L$-smooth and minimized at some $\xx^\star$ that verifies $\NRM{\xx^0-\xx^\star}^2\leq B^2$, and such that for any $T$ and any Markov black-box algorithm that outputs $\xx^T$ after $T$ steps, we have:
		\begin{equation*}
			f(\xx_T)-f(\xx^\star)=\Omega\left(LB^2\exp\left(-\frac{T}{\sqrt\kappa\tau_{\rm hit}}\right)\right)\,.
		\end{equation*}
	\end{enumerate}
\end{theorem}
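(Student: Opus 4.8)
The plan is to adapt the distributed black-box lower bound of \citet{scaman17optimal} to the Markov-sampling setting, the only genuinely new ingredient being a hitting-time counting argument replacing their communication-delay argument. First I would build the hard instance from the classical chained quadratic. In the convex case, work in $\ell_2$ and take $f(\xx)=\frac{L}{8}(\frac12\xx^\top A\xx-\lr{\ee_1,\xx})$, where $A$ is the infinite tridiagonal operator with $2$ on the diagonal and $-1$ on the off-diagonals; this $f$ is convex and $L$-smooth, and a suitable truncation has a minimizer $\xx^\star$ with $\NRM{\xx^0-\xx^\star}\le B$ after rescaling. I then split $A=A_u+A_w$, where $A_u$ gathers the couplings between coordinates $(2j-1,2j)$ and $A_w$ those between $(2j,2j+1)$, and set $f_u$ to carry the $A_u$ part plus the linear term (divided by $\pi_u$), $f_w$ to carry the $A_w$ part (divided by $\pi_w$), and $f_v=0$ for all remaining $v$, so that $f=\sum_v\pi_v f_v$. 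For the strongly convex case I would add a $\frac{\mu}{2}\NRM{\xx}^2$ term and use the standard $\kappa$-dependent chained construction; for the non-convex case, the analogous chained hard instance of \citet{carmon2021lower}.

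The key structural property, inherited verbatim from the chained construction, is a span/progress invariant: if at time $t$ the memory satisfies $\cM_t\subseteq{\rm Span}(\ee_1,\ldots,\ee_k)$, then querying $\nabla f_{v_t}$ can enlarge it to ${\rm Span}(\ee_1,\ldots,\ee_{k+1})$ only when $v_t$ is the unique node (either $u$ or $w$) holding the coupling incident to coordinate $k+1$; otherwise $\cM_{t+1}=\cM_t$. Because odd and even couplings live on $u$ and $w$ respectively, the node required to unlock the next coordinate alternates between $u$ and $w$. Writing $m_t$ for the largest unlocked index, so that $\xx_t\in{\rm Span}(\ee_1,\ldots,\ee_{m_t})$, the sequence $(m_t)$ increases by at most one per step and only at the currently-required node; in particular $m_T\le T$, so taking the truncation length larger than $T$ (or working in $\ell_2$) makes the chained value lower bounds applicable for every realization.

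The Markovian heart of the argument is to show $\esp{m_T}=O(T/\tau_\hit)$. I would choose $(u,w)$ to be a pair maximizing $\esp{\tau_w\,|\,v_0=u}=\tau_\hit$; its commute time then satisfies $C(u,w):=\esp{\tau_w\,|\,v_0=u}+\esp{\tau_u\,|\,v_0=w}\ge\tau_\hit$. Let $S_m$ be the first time $m$ coordinates are unlocked. Unlocking two consecutive coordinates forces the chain to travel from $u$ to $w$ and back, and by the strong Markov property each such round trip costs, in expectation, exactly $C(u,w)$ irrespective of the starting state, so an optional-stopping / Wald-type bookkeeping gives $\esp{S_m}\ge\tfrac12(m-1)\,C(u,w)\ge\tfrac12(m-1)\tau_\hit$. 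The finite-second-moment hypothesis on $\tau_v$ is precisely what makes these hitting-time sums and their expectations well defined and controls their fluctuations. Since $\{m_T\ge m\}=\{S_m\le T\}$, Markov's inequality then yields $\esp{m_T}\le 2T/\tau_\hit+O(1)$.

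Finally I would assemble the three bounds. Conditionally on $m_T$, the chained construction gives the deterministic guarantees $\NRM{\nabla f(\xx_T)}^2\ge c\,L\Delta/(m_T+1)^2$, $f(\xx_T)-f(\xx^\star)\ge c\,LB^2/(m_T+1)^2$, and $f(\xx_T)-f(\xx^\star)\ge c\,LB^2\exp(-c'm_T/\sqrt\kappa)$ in the three regimes. Since $x\mapsto 1/(x+1)^2$ and $x\mapsto\exp(-c'x/\sqrt\kappa)$ are convex, Jensen's inequality together with $\esp{m_T}=O(T/\tau_\hit)$ delivers the stated $\Omega(L\Delta(\tau_\hit/T)^2)$, $\Omega(LB^2(\tau_\hit/T)^2)$, and $\Omega(LB^2\exp(-T/(\sqrt\kappa\,\tau_\hit)))$ rates. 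The main obstacle I anticipate is the Markov-chain step: rigorously lower-bounding $\esp{S_m}$ against an \emph{adaptive} algorithm that decides when to query, which requires coupling the (deterministic-given-the-trajectory) progress counter $m_t$ to the strong Markov property of $(v_t)$ and handling the second-moment hypothesis with care, rather than the hard instance itself, which is a direct transcription of known constructions.
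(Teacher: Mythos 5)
Your construction and overall architecture coincide with the paper's proof: the paper also splits the chained hard instance (Carmon et al.\ in the non-convex case, Nesterov's ``most difficult function'' in the convex cases) between two nodes $v,w$ chosen to realize $\tau_\hit$, establishes the same alternating span/progress invariant, and finishes with Jensen applied to $\esp{k(t)}$. The one genuine gap is the Markov-chain counting step, which you yourself flag as the anticipated obstacle but then dispatch incorrectly. From $\set{m_T\geq m}=\set{S_m\leq T}$ and the first-moment bound $\esp{S_m}\geq\frac{1}{2}(m-1)\tau_\hit$, Markov's inequality cannot yield $\esp{m_T}\leq 2T/\tau_\hit+O(1)$: Markov and Wald control the \emph{upper} tail of $S_m$, whereas you need the \emph{lower} tail $\proba{S_m\leq T}$ to be small when $m\gg T/\tau_\hit$, and no first-moment information rules out $S_m$ collapsing far below its mean. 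Concretely, if each trip time equals $1$ with probability $1-1/\tau_\hit$ and is of order $\tau_\hit^2$ otherwise, the mean is of order $\tau_\hit$ yet $\proba{S_m\leq T}$ stays near $1$ for $m$ up to order $T$, so $\esp{m_T}=\Theta(T)$, not $O(T/\tau_\hit)$. Relatedly, $m_T$ is not a stopping time for the trip sequence ($\set{m_T=m}$ depends on $X_{m+1}$); Wald applies to $m_T+1$ and then gives $\esp{m_T}\geq T/\tau_\hit-1$ --- the wrong direction (the renewal inspection paradox). Finite-horizon upper bounds on renewal counts genuinely require second moments (cf.\ Lorden's inequality), which is precisely why the theorem assumes $\tau_v$ has finite second moment; the paper even remarks that the na\"ive independence/first-moment argument fails because of tail effects.

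The repair is the paper's Chebyshev argument, which slots directly into your outline. Take $(X_\ell)$ i.i.d.\ with the law of $\tau_w$ conditioned on $v_0=v$, so $\esp{X_\ell}=\tau_\hit$ and $\var(X_\ell)<\infty$ by hypothesis; by the strong Markov property $T_k$ stochastically dominates $S_{\lfloor k/2\rfloor}$, and for $\ell\tau_\hit\geq t$ the lower-tail bound
\begin{equation*}
    \proba{S_\ell\leq t}\leq \frac{\ell\,\var(X_0)}{(t-\ell\tau_\hit)^2}
\end{equation*}
follows from Chebyshev. Summing $\esp{k(t)}=\sum_k\proba{T_k\leq t}\leq 2\sum_{\ell\leq t/2}\proba{S_\ell\leq t}$ and splitting the sum at $\ell=2t/\tau_\hit$ gives $\esp{k(t)}\leq \frac{4t}{\tau_\hit}+\frac{2\var(\tau_v)}{\tau_\hit^2}\bigl(\ln(\tau_\hit)+1+\frac{\tau_\hit}{t}\bigr)$, hence $\esp{k(t)}\leq 5t/\tau_\hit$ for $t$ large enough, after which Jensen concludes exactly as you propose. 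With Chebyshev substituted for ``Markov,'' your plan is the paper's proof; your use of commute times in place of the paper's one-way $v\to w$ trip times is only a cosmetic difference.
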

%\begin{proof}[Proof overview, non-convex case] 
%    For $\xx\in\ell_2$ and $k\in\N$, denote by $\xx(k)$ its $k^{\rm th}$ coordinate.We split the function defined in Section 3.2 of \citet{carmon2021lower} (inspired by the ‘‘most difficult function in the world'' of \citet{nesterov_book}) between two nodes $v,w\in\cV$ maximizing $\esp{\tau_w|v_0=v}$, by setting $\pi_vf_v(\xx)=\frac{1}{2}\sum_{k\geq 1} 2\xx(2k)^2-2\xx(2k-1)\xx(2k) + \frac{1}{2}\alpha \xx(0)^2 -b \xx(0) + \frac{\alpha}{2}$ and $\pi_wf_w(\xx)=\frac{1}{2}\sum_{k\geq0} 2\xx(2k+1)^2 - 2\xx(2k+1)\xx(2k)$ for some $b,\alpha>0$. Then, defining $T_0=\tau_v$ and for~$t\geq0$,
%    \begin{align*}
 %       &T_{2k+1}=\inf\set{t\geq T_{2k}\,|\, v_t=w}\quad\text{and}\\
  %      & T_{2k+2}=\inf\set{t\geq T_{2k+1}\,|\, v_t=w}\,,
   % \end{align*}
    %so that any black box procedure with Markov sampling~$v_t$ with $\xx_0=0$ satisfies, for any $t\leq T_k$, $\xx_{t}\in{\rm Span}(\ee_0,\ldots,\ee_{k-1})$ (where $(\ee_i)$ is the canonical basis of $\ell_2$) for any $k\geq0$.  Consequently, for $t\leq T_k$, one has, using results from \citet{carmon2021lower}:
%    \begin{equation*}
 %       \NRM{\nabla f(\xx_{t})}^2\geq \frac{L \Delta}{16k^2}\,.
  %  \end{equation*}
   % For any $t\geq0$, there exists some $k(t)\geq0$ such that $T_{k(t)}\leq t < T_{k(t)+1}$, yielding $\esp{\NRM{\nabla f(\xx_t)}^2}\geq \esp{\frac{L\Delta}{16k(t)^2}}\geq \frac{L\Delta}{16\esp{k(t)}^2}$ using Jensen inequality.
    %Finally, we prove a bound of the form $\E k(t)=\cO(t/\tau_\hit)$, concluding the proof.
%\end{proof}
A complete proof can be found in Appendix~\ref{app:lower}. 
The hitting time of the Markov chain bounds, starting from any point in $\cV$, the mean time it takes to reach any other state in the graph.
Making no other assumptions than smoothness, having rates that depend on this hiting time is thus quite intuitive.

\section{Analysis of Markov-Chain SGD}\label{sec:mcsgd}

%\begin{algorithm}[h]
 %   \caption{Markov Chain Stochastic Gradient Descent for decentralized optimization}
  %  \label{algo:mcsgd}
   % \begin{algorithmic}[1]
    %\State \textbf{Input:} $\xx_0\in \R^{d}$, $\gamma>0$, $v_0\in\cV$
%    \For{$t = 0,1,\dots$}
 %       \State Upon reception of $\xx_t$,  $v_t$ computes $\nabla f_{v_t}(\xx_t)$
  %      \State $\xx_{t+1} = \xx_t - \gamma_t \nabla f_{v_t}(\xx_t)$
   %     \State Agent $v_t$ sends $\xx_{t+1}$ to $v_{t+1}$ 
    % \EndFor
    %\end{algorithmic}	
%\end{algorithm}

We have shown in last subsection that, in order to reach an $\eps$-stationary point with Markov sampling, the optimizer is slowed down by the hitting time of the Markov chain; this lower bound being worst-case on the functions $(f_v)$, we here add additional similarity assumptions, that are still milder than classical ones in this setting \cite{tao2018mcsgd}.% to encompass \emph{infinite state space} $\cV$.
Studying the iterates generated by~\eqref{eq:mcsgd}, we obtain in this section a dependency on $\tau_\mix$, provided bounded gradient dissimilarities (Assumptions~\ref{hyp:dissimilarities} and~\ref{hyp:local-sto}). 

We here assume that $(v_t)_{t\geq0}$ is a Markov chain on $\cV$ of invariant probability $\pi$ (not necessarily the uniform measure on $\cV$).
Our analysis of Markov chain SGD \textbf{does not} rely on finite state spaces: $\cV$ is not assumed to be finite (it can be any infinite countable, or continuous space).
In this section, the function $f$ studied is defined as $$f(\cdot)=\E_{v\sim\pi}[f_v(\cdot)]\,,$$ as in~\eqref{eq:f_gen}.
Consequently, for the MC-SGD algorithm for decentralized optimization over a given graph $G$ to minimize the averaged function over all nodes (as in \eqref{eq:f}), $\pi$ needs to be the uniform probability over $\cV$.

We first derive convergence rates under smoothness assumptions with or without a $\mu$-PL inequality that holds, before improving our results under strong convexity assumptions, under which we prove a linear convergence rate in the interpolation regime.
We finally add local noise (due to sampling, or additive gaussian noise to enforce privacy) in the final paragraph of this Section.

\subsection{Analysis under bounded gradient dissimilarities}

\begin{assumption}\label{hyp:dissimilarities}
	There exists $(\sigma^2_{v})_{v\in\cV}$ such that for all $v\in\cV$ and all $\xx\in\R^d$, we have\footnote{this assumption could be replaced by a more relaxed noise assumption of the form $\NRM{\nabla f_v(\xx)-\nabla f(\xx)}^2\leq M\NRM{\nabla f(\xx)}^2+\sigma^2_v$}:
	\begin{equation*}
		\NRM{\nabla f_v(\xx)-\nabla f(\xx)}^2\leq \sigma^2_v\,,
	\end{equation*}
	and we denote $\bar\sigma^2=\E_{v\sim\pi}\left[\sigma_v^2\right]$ and $\sigma_{\max}^2=\max_{v\in\cV}\sigma_v^2$.
\end{assumption}

\begin{assumption}\label{hyp:smooth}
    Each $f_v$ is $L$-smooth, $f$ is lower bounded, its minimum is attained at some $\xx^\star\in\R^d$.
\end{assumption}

%Under these two assumptions, we obtain the following result.

\begin{theorem}[MC-SGD] \label{thm:mcsgd}
	Assume that Assumptions~\ref{hyp:dissimilarities} and \ref{hyp:smooth} hold, and let $\Delta\geq f(\xx_0)-f(\xx^\star) + \sigma_{\max}^2/L$.
	\begin{enumerate}
	    \item \label{thm:mcsgd_nonsmooth} For a constant time-horizon dependent step size $\gamma$ (\emph{i.e.}, $\gamma$ is a functiion of $T$), the iterates generated by Equation~\eqref{eq:mcsgd} satisfy, for $T\geq 2\tau_{\mix}\ln(\tau_\mix)$:
    \footnote{$\Tilde\cO$ hides logarithmic factors}
	\begin{equation*}
		\E\NRM{\nabla f(\hat\xx_T)}^2\!=\!\Tilde\cO\!\left(\frac{\Delta L\tau_{\rm mix}}{T}+\frac{\sqrt{L\Delta\bar\sigma^2\tau_{\rm mix}}+\bar\sigma^2}{\sqrt{T}}\right)\,,
	\end{equation*}
	where $\hat \xx_T$ is drawn uniformly at random amongst $\xx_0,\ldots,\xx_{T-1}$.
	\item \label{thm:mcsgd_PL} If $f$ additionally verifies a $\mu$-PL inequality (for any $\xx\in\R^d$, $\NRM{\nabla f(\xx)}^2)\geq 2\mu(f(\xx)-f(\xx^\star))$), for a constant time-horizon dependent step size $\gamma$, the iterates generated by Equation~\eqref{eq:mcsgd} satisfy, for $T\geq 2\tau_{\mix}\ln(\tau_\mix)$ a numerical constant $c>0$, and $\kappa=L/\mu$, with $F_T=\esp{f(\xx_T)-f(\xx^\star)}$:
	\begin{equation*}
		F_T\leq e^{-\frac{cT}{\kappa \tau_\mix\ln(T)}}\Delta + \Tilde\cO\left(\frac{\tau_\mix\bar\sigma^2}{\mu T}\right)\,,
	\end{equation*}
	\end{enumerate}
\end{theorem}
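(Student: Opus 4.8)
\emph{Proof plan.} The plan is to run the $L$-smoothness descent inequality and isolate the single problematic quantity, the gradient bias $\E\langle\nabla f(\xx_t),\,\nabla f_{v_t}(\xx_t)-\nabla f(\xx_t)\rangle$, which is nonzero precisely because $v_t$ and $\xx_t$ are dependent. Applying the descent lemma to \eqref{eq:mcsgd} and splitting $\langle\nabla f(\xx_t),\nabla f_{v_t}(\xx_t)\rangle=\NRM{\nabla f(\xx_t)}^2+\langle\nabla f(\xx_t),\nabla f_{v_t}(\xx_t)-\nabla f(\xx_t)\rangle$ gives
\begin{equation*}
f(\xx_{t+1})\leq f(\xx_t)-\gamma\NRM{\nabla f(\xx_t)}^2-\gamma\langle\nabla f(\xx_t),\nabla f_{v_t}(\xx_t)-\nabla f(\xx_t)\rangle+\tfrac{L\gamma^2}{2}\NRM{\nabla f_{v_t}(\xx_t)}^2,
\end{equation*}
where, by Assumption~\ref{hyp:dissimilarities}, $\NRM{\nabla f_{v_t}(\xx_t)}^2\leq 2\NRM{\nabla f(\xx_t)}^2+2\sigma_{v_t}^2$, so the last term contributes the usual noise floor once summed (using that $\E\,\sigma_{v_t}^2$ is close to $\bar\sigma^2$ up to a mixing correction harmless for $t\ge\tau_\mix$).

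The core step is to bound the bias by looking back a window of length $\tau=\tau_\mix$. First I would introduce the reference iterate $\xx_{t-\tau}$ and condition on $\cF_{t-\tau}=\sigma(v_0,\dots,v_{t-\tau})$, decomposing
\begin{equation*}
\nabla f_{v_t}(\xx_t)-\nabla f(\xx_t)=\bigl[\nabla f_{v_t}(\xx_t)-\nabla f_{v_t}(\xx_{t-\tau})-\nabla f(\xx_t)+\nabla f(\xx_{t-\tau})\bigr]+\bigl[\nabla f_{v_t}(\xx_{t-\tau})-\nabla f(\xx_{t-\tau})\bigr].
\end{equation*}
The first (location) bracket is bounded by $2L\NRM{\xx_t-\xx_{t-\tau}}\leq 2L\gamma\sum_{s=t-\tau}^{t-1}\NRM{\nabla f_{v_s}(\xx_s)}$ using $L$-smoothness of $f$ and of each $f_v$. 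For the second (distribution) bracket, $\xx_{t-\tau}$ is $\cF_{t-\tau}$-measurable and $v_t\mid\cF_{t-\tau}$ has law $P^\tau(v_{t-\tau},\cdot)$; centering and using $\sum_w(P^\tau(v_{t-\tau},w)-\pi_w)=0$ gives
\begin{equation*}
\NRM{\E[\nabla f_{v_t}(\xx_{t-\tau})-\nabla f(\xx_{t-\tau})\mid\cF_{t-\tau}]}\leq\sigma_{\max}\sum_{w\in\cV}|P^\tau(v_{t-\tau},w)-\pi_w|\leq\sigma_{\max}\pi_{\min},
\end{equation*}
which is negligible by the choice $\tau=\tau_\mix=\tau_\mix(\pi_{\min}/2)$, since $\sum_w|P^\tau(v_{t-\tau},w)-\pi_w|=2\,\dd_{\rm TV}(P^\tau(v_{t-\tau},\cdot),\pi)\le\pi_{\min}$.

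Next I would insert these bounds, apply Cauchy--Schwarz and Young's inequality to peel off $\NRM{\nabla f(\xx_t)}$, so that $-\gamma\times$(bias) is at most $\tfrac{\gamma}{8}\NRM{\nabla f(\xx_t)}^2$ plus an error of order $\gamma^3L^2\tau\sum_{s=t-\tau}^{t-1}\NRM{\nabla f_{v_s}(\xx_s)}^2$ (the $\sigma_{\max}\pi_{\min}$ piece being absorbed harmlessly). Summing over $t$ and noting each index $s$ appears in at most $\tau$ windows, the accumulated error is $\cO(\gamma^3L^2\tau^2)\sum_s(\NRM{\nabla f(\xx_s)}^2+\sigma_{v_s}^2)$. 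Telescoping $f(\xx_t)$ and imposing the step-size cap $\gamma=\cO(1/(L\tau_\mix))$ keeps the coefficient of $\sum_t\E\NRM{\nabla f(\xx_t)}^2$ at least $\gamma/4$, absorbing the look-back gradient terms and leaving
\begin{equation*}
\frac{1}{T}\sum_{t=0}^{T-1}\E\NRM{\nabla f(\xx_t)}^2\leq\frac{c\,\Delta}{\gamma T}+c\,\gamma L\tau_\mix\bar\sigma^2 .
\end{equation*}
Optimizing $\gamma$ over $(0,\cO(1/(L\tau_\mix))]$ then produces the three-term rate of Part~\ref{thm:mcsgd_nonsmooth} (the cap giving $\Delta L\tau_\mix/T$, the interior optimum the $\sqrt{L\Delta\bar\sigma^2\tau_\mix/T}$ and $\bar\sigma^2/\sqrt T$ terms), and since $\hat\xx_T$ is uniform over $\xx_0,\dots,\xx_{T-1}$, the left side equals $\E\NRM{\nabla f(\hat\xx_T)}^2$. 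For Part~\ref{thm:mcsgd_PL} I would feed the PL inequality $\NRM{\nabla f(\xx_t)}^2\ge2\mu(f(\xx_t)-f(\xx^\star))$ into the same one-step bound; because of the $\tau$-step coupling this is not a clean one-step contraction, so I would use a Lyapunov function combining $F_t$ with a weighted tail of past $\E\NRM{\nabla f(\xx_s)}^2$ (or argue over blocks of length $\tau$), yielding a per-step contraction factor $1-\Theta(\gamma\mu)=1-\Theta(1/(\kappa\tau_\mix))$; the extra $\ln(T)$ in the exponent arises because linear convergence forces the distribution error to be dominated by the current suboptimality, i.e.\ mixing to an accuracy that shrinks with the horizon, costing a factor $\tau_\mix(\eps)\le\lceil\log_2(1/\eps)\rceil\tau_\mix(1/4)$.

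The hard part will be the unbounded-gradient bookkeeping in the look-back step: since previous analyses assumed bounded gradients, the terms $\NRM{\nabla f_{v_s}(\xx_s)}$ appearing in $\NRM{\xx_t-\xx_{t-\tau}}$ were trivially controlled, whereas here they must be carried through the sum and reabsorbed into the descent term, which is exactly what pins down the step-size constraint $\gamma=\cO(1/(L\tau_\mix))$ and hence the $\tau_\mix$ (rather than $\tau_\mix^2$ or $n\tau_\mix$) dependence. Secondary technical points are the boundary handling of the first $\tau_\mix$ iterates, where no look-back exists (their contribution is a bounded additive term, negligible once $T\ge2\tau_\mix\ln\tau_\mix$), and the control of $\E\,\sigma_{v_t}^2$ versus $\bar\sigma^2$ for the non-stationary chain.
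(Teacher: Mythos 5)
Your plan follows the paper's proof essentially step for step: the paper likewise starts from the smoothness descent inequality, introduces the delayed reference point $\xx_{t-\tau}$ and conditions on the filtration at time $t-\tau$, splits the inner product into a frozen-point bias term plus two Lipschitz ``location'' terms controlled via $\NRM{\xx_t-\xx_{t-\tau}}\le\gamma\sum_{s=t-\tau}^{t-1}\NRM{\nabla f_{v_s}(\xx_s)}$, absorbs the look-back sums into the descent term under the cap $\gamma=\cO(1/(L\tau))$, handles the burn-in window by $F_\tau\le F_0+\sigma_{\max}^2/L$ (which is exactly why $\Delta$ contains $\sigma_{\max}^2/L$), and for Part~2 runs precisely the weighted-sum/Lyapunov contraction you sketch, with weights $P_t=(1-\alpha\gamma\mu/4)^{-t}$.

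One step, as written, would fail — though you already hold the fix. In Part~1 you freeze the window at $\tau=\tau_\mix=\tau_\mix(\pi_{\min}/2)$, so your distribution bracket is only bounded by the \emph{constant} $\sigma_{\max}\pi_{\min}$. After Young's inequality and telescoping, this leaves a non-vanishing additive floor of order $\sigma_{\max}^2\pi_{\min}^2$ in $\frac1T\sum_{t<T}\E\NRM{\nabla f(\xx_t)}^2$, which is eventually larger than $\Delta L\tau_\mix/T+\cdots/\sqrt T$ as $T$ grows; it is a constant, not a logarithmic factor, so $\Tilde\cO$ does not hide it, and the claim that it is ``absorbed harmlessly'' is not justified. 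The paper makes the mixing accuracy horizon-dependent already in Part~1: it takes $\eps=1/\sqrt T$ and $\tau=\tau_\mix\ln T$, so the frozen-point bias contributes $\cO(\eps^2\bar\sigma^2)=\cO(\bar\sigma^2/T)$; this choice is precisely the source of the logarithms hidden in $\Tilde\cO$ and of the hypothesis $T\geq 2\tau_\mix\ln(\tau_\mix)$ (which guarantees $\tau\le T$). You invoke exactly this device ($\tau_\mix(\eps)\le\lceil\log_2(1/\eps)\rceil\tau_\mix(1/4)$) in your discussion of Part~2, so you only need to apply it in Part~1 as well. A secondary, cosmetic difference: the paper bounds the frozen-point bias through the multiplicative closeness $|\proba{v_t=v\mid\cF_{t-\tau}}-\pi_v|\le\eps\pi_v$, yielding $\eps^2\big(\bar\sigma^2+\E\NRM{\nabla f(\xx_{t-\tau})}^2\big)$ with the \emph{average} noise $\bar\sigma^2$ rather than your $\sigma_{\max}^2$; with your additive total-variation bound the resulting $\sigma_{\max}^2/T$ term is still dominated, since $\Delta\ge\sigma_{\max}^2/L$ implies $\sigma_{\max}^2/T\le\Delta L\tau_\mix/T$, so this only affects which noise quantity appears in intermediate constants.
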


Theorem~\ref{thm:mcsgd} is proved in Appendix~\ref{app:mcsgd}, by enforcing a delay of order $\tau_\mix$ and relying on recent analyses of delayed SGD and SGD with biased gradients. 
As explained in the introduction, removing the bounded gradient assumption present in previous works \citep{johansson2010randomizedincremental,tao2018mcsgd,duchi2012ergodic} that study Markov chain SGD (in the mirror setting, or with subdifferentials), and replacing it by a much milder and classical assumption of bounded gradient dissimilarities \citep{scaffold}, we thus still managed to obtain similar rates.
Further, if $f$ verifies a $\mu$-PL inequality (if for any $\xx\in\R^d$, $\NRM{\nabla f(\xx)}^2)\geq 2\mu(f(\xx)-f(\xx^\star))$), we have an almost-linear rate of convergence:
this is the first rate under $\mu$-PL or strong convexity assumptions for MC-SGD-like algorithms, that we even refine further in next subsection.

\subsection{Tight rates and linear convergence in the interpolation regime}

%We now consider the Markov-chain SGD iterations \eqref{eq:mcsgd}, with local stochasticity, \emph{i.e.}~the sequence generated by:
%\begin{equation}\label{eq:mcsgd_localsto}
 %   \xx_{t+1}=\xx_t-\gamma\gg_t\,,
%\end{equation}
%where $\esp{\gg_t|\xx_t,v_t}=\nabla f_{v_t}(\xx_t)$. Next assumption precisely defines these local stochastic gradients.

We now study MC-SGD under the following assumptions, to derive faster rates, that only depend on the sampling noise at the optimum. The interpolation regime -- often related to overparameterization -- refers to the case where there exists a model $\xx^\star\in\R^d$ minimizing all $f_v$ for $v\in\cV$, leading to $\sigma_\star^2=0$ in Assumption~\ref{hyp:noise_opti}, and to a linear convergence rate below.

\begin{assumption}\label{hyp:local-sto}
Functions $f_v$ are 
%of the form $f_v(\xx)=\esp{F_v(\xx,\xi_v)}$, where $\xi_v$ is a random variable, and stochastic gradients are $\gg_t=\nabla_\xx F_{v_t}(\xx_t,\xi_t)$ where $\xi_t\sim\xi_{v_t}|v_t$, independent of the past. Furthermore, for all $\xi_v$, $F(\cdot,\xi_v)$ is
$L$-smooth and $\mu$-strongly convex. We denote $\kappa=L/\mu$.
\end{assumption}
\begin{assumption}[Noise at the optimum]\label{hyp:noise_opti}
    Let $\xx^\star$ be a minimizer of $f$. We assume that for some $\sigma_\star\geq0$, we have for all $v\in\cV$:
    \begin{equation*}
        \NRM{\nabla f_v(\xx^\star)}^2\leq \sigma_\star^2\,.
        %,\quad \esp{\NRM{\nabla_\xx F_v(\xx^\star,\xi_v)}^2}\leq \rho_\star^2\,.
    \end{equation*}
\end{assumption}
\begin{theorem}[Unified analysis]\label{thm:mcsgd_interpolation}
    Under Assumptions~\ref{hyp:local-sto} and \ref{hyp:noise_opti}, the sequence generated by \eqref{eq:mcsgd} satisfies, if $\gamma L<1$:
    \begin{equation*}\begin{aligned}
        &\esp{\NRM{\xx_T-\xx^\star}^2}\leq 2(1-\gamma\mu)^T\NRM{\xx_0-\xx^\star}^2 \\
%        &\quad +2(1-\gamma\mu)^T\gamma^2\esp{\NRM{\sum_{t<T}\nabla f_{v_t}(\xx^\star)}^2}\\
        &+2\frac{\gamma^3TL}{\mu}\sum_{0\leq s\leq T}(1-\gamma\mu)^{T-s}\esp{\NRM{\sum_{s\leq t<T}\nabla f_{v_t}(\xx^\star)}^2}\,.
        \end{aligned}
    \end{equation*}
    In the interpolation regime, $\nabla f_v(\xx^\star)=0$ for all $v\in\cV$, so that:
    \begin{equation*}
        \esp{\NRM{\xx_T-\xx^\star}^2}\leq 2(1-\gamma\mu)^T\NRM{\xx_0-\xx^\star}^2\,.
    \end{equation*}
\end{theorem}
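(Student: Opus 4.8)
The plan is to linearise the recursion around $\xx^\star$ and reduce the analysis to controlling partial sums of the gradients $\nabla f_{v_t}(\xx^\star)$ along the chain. Write $\ee_t = \xx_t - \xx^\star$ and $b_t = \nabla f_{v_t}(\xx^\star)$. Since each $f_v$ is $L$-smooth and $\mu$-strongly convex (Assumption~\ref{hyp:local-sto}), the fundamental theorem of calculus gives $\nabla f_{v_t}(\xx_t) - \nabla f_{v_t}(\xx^\star) = H_t \ee_t$ with $H_t = \int_0^1 \nabla^2 f_{v_t}(\xx^\star + \theta \ee_t)\,\dd\theta$ a symmetric operator satisfying $\mu I \preceq H_t \preceq L I$ (when $f_v$ is only $C^1$ the same representation holds with a suitable symmetric $H_t$ obtained by mollification or co-coercivity, so I lose no generality). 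Plugging this into~\eqref{eq:mcsgd} yields the affine recursion
\[
    \ee_{t+1} = M_t \ee_t - \gamma b_t, \qquad M_t = I - \gamma H_t,
\]
and the condition $\gamma L < 1$ ensures $0 \preceq M_t \preceq (1-\gamma\mu) I$, hence $\NRM{M_t}_{\rm op} \le 1-\gamma\mu$.

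Next I would unroll this linear recursion. Writing $\Phi_{s:T} = M_{T-1}M_{T-2}\cdots M_s$ (with the empty product equal to $I$), one gets
\[
    \ee_T = \Phi_{0:T}\,\ee_0 - \gamma \sum_{s=0}^{T-1} \Phi_{s+1:T}\, b_s .
\]
The crucial step is a \emph{summation by parts} that turns the individual gradients $b_s$ into the suffix sums $S_s = \sum_{s \le t < T} b_t$ appearing in the statement. Using $b_s = S_s - S_{s+1}$ with $S_T = 0$, together with the telescoping identity $\Phi_{s+1:T}(I - M_s) = \gamma\,\Phi_{s+1:T} H_s$, the noise term rewrites as a combination of the $S_s$ weighted by contractive operators. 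This is exactly what makes the Markovian correlations manageable: rather than controlling each $b_s$ (which is strongly coupled to $\ee_s$ and to $v_s$ through the chain), I only ever need to control the aggregated quantities $\esp{\NRM{S_s}^2}$, which are the natural objects governed by the mixing of $(v_t)$ and can be estimated separately.

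I would then pass to norms, using $\NRM{\Phi_{s:T}}_{\rm op} \le (1-\gamma\mu)^{T-s}$ and $\NRM{H_s}_{\rm op}\le L$. Young's inequality peels off the initial-condition contribution, giving the leading $2(1-\gamma\mu)^T\NRM{\xx_0 - \xx^\star}^2$, while a weighted Cauchy--Schwarz applied to the summation-by-parts expression, with weights proportional to the geometric factors $(1-\gamma\mu)^{T-s}$ whose total mass is $\sum_{s}(1-\gamma\mu)^{T-s} \le 1/(\gamma\mu)$, produces the remaining $1/\mu$ factor and the geometrically weighted double sum $\sum_s (1-\gamma\mu)^{T-s}\esp{\NRM{S_s}^2}$. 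Taking expectations only at the very end then yields the stated bound. In the interpolation regime $b_t \equiv 0$ (Assumption~\ref{hyp:noise_opti} with $\sigma_\star = 0$), so every $S_s$ vanishes and $\ee_T = \Phi_{0:T}\ee_0$, which collapses the estimate to the pure geometric contraction $\esp{\NRM{\xx_T - \xx^\star}^2}\le (1-\gamma\mu)^{2T}\NRM{\xx_0-\xx^\star}^2$, a fortiori the claimed inequality.

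The main obstacle is twofold, and both parts are about bookkeeping rather than a single hard idea. First, every manipulation must be carried out pathwise, for a fixed realisation of the chain, because $M_t$ and $H_t$ depend on $\xx_t$ and are therefore correlated with both $v_t$ and the noise $b_t$; taking the expectation only after the deterministic rearrangement is precisely what lets me avoid any independence assumption between the iterate and the current state. Second, the summation by parts generates a boundary term and non-uniform coefficients, so the delicate point is choosing the Cauchy--Schwarz weights so that the contraction factors recombine into the clean geometric weighting $(1-\gamma\mu)^{T-s}$ and the prefactor matches the stated one; this constant-chasing, together with the verification that $\NRM{M_t}_{\rm op}\le 1-\gamma\mu$ holds uniformly along the trajectory, is where the care is required.
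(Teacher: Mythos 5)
Your proof is correct and takes a genuinely different route from the paper's. The paper uses a perturbed-iterate (fictitious-sequence) argument in the style of the random-reshuffling analysis it cites: it introduces an auxiliary sequence $\yy_{t+1}=\yy_t-\gamma\nabla f_{v_t}(\xx^\star)$ with the terminal condition $\yy_T=\xx^\star$, so that $\yy_s-\xx^\star=\gamma\sum_{s\le t<T}\nabla f_{v_t}(\xx^\star)$ (exactly your suffix sums $S_s$), and contracts $\NRM{\xx_t-\yy_t}^2$ via a three-point Bregman-divergence identity using only first-order consequences of $\mu$-strong convexity and $L$-smoothness; your Abel summation with $S_T=0$ plays precisely the role of the paper's choice of $\yy_0$, so both proofs track the same objects, but your mechanism --- exact affine unrolling $\ee_{t+1}=M_t\ee_t-\gamma b_t$ followed by summation by parts, operator-norm bounds on the products $\Phi_{s:T}$, and a weighted Cauchy--Schwarz --- is a different decomposition. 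What your route buys: the appearance of the suffix sums is a transparent algebraic identity rather than an ansatz, and your weights give a noise prefactor of order $\gamma^3L^2/\mu$ rather than the stated $\gamma^3 TL/\mu$, i.e., no spurious factor $T$ (for the record, the paper's own proof also yields a sharper constant than its theorem statement, so the residual constant mismatches in your last paragraph are of the same cosmetic kind as in the paper itself). What it costs: you need the pathwise representation $\nabla f_{v_t}(\xx_t)-\nabla f_{v_t}(\xx^\star)=H_t\ee_t$ with symmetric $\mu I\preceq H_t\preceq LI$, which is immediate for $C^2$ functions but, for merely $C^1$ functions, does not follow from co-coercivity alone as stated; it does hold, via the two-point interpolation fact that the standard $(\mu,L)$ inequality $\langle g-\mu u, Lu-g\rangle\ge 0$ with $u=\ee_t$, $g=\nabla f_{v_t}(\xx_t)-\nabla f_{v_t}(\xx^\star)$ is exactly the condition for a symmetric $H$ with spectrum in $[\mu,L]$ and $Hu=g$ to exist (a small lemma you should state, or else run a mollification limit); the paper's Bregman route sidesteps this issue entirely. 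Both arguments are, as you rightly emphasize, deterministic up to the final expectation, which is what makes arbitrary (in particular Markovian) orderings admissible --- a point the paper also makes explicitly.
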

The result in Theorem~\ref{thm:mcsgd_interpolation} is in fact true irrespectively of the sequence $(v_t)$ chosen: it does not require $(v_t)$ to specifically be a Markov chain. This property is used in the next Corollary, that also highlights the fact that by studying distance to the optimum, a condition number is lost in the process. This is the case in many previous analyses of other different algorithms (\emph{e.g.}, Bregman/Mirror-SGD \citep{dragomir2021fast} or SGD with random-resfhuffling \citep{mischenko2020randomreshuffling}, which is in fact a particular instance of MC-SGD, that our analysis recovers), that study distances to the optimum (with respect to some mirror map, in the case of Mirror SGD), and therefore obtain an extra $\kappa$ factor in the noise term.
Theorem~\ref{thm:mcsgd_interpolation} is proved by generalizing the proof technique of \cite{mischenko2020randomreshuffling} to arbitrary orderings and for unbounded time horizons.

\begin{remark}[Random resfhuffling]
    A special case of Theorem~\ref{thm:mcsgd_interpolation} is SGD with random reshuffling.
    By analyzing SGD with random-reshuffling as SGD with a Markovian ordering (on an extended state space), Theorem~\ref{thm:mcsgd_nonsmooth},\ref{thm:mcsgd_PL} also recover rates for SGD with random reshuffling for which we have $\tau_\mix=n$.
    Moreover, since Theorem~\ref{thm:mcsgd_interpolation} generalizes Theorem 1 of \cite{mischenko2020randomreshuffling}, we also recover their rate as a special case by bounding each term $\esp{\NRM{\sum_{s\leq t<T}\nabla f_{v_t}(\xx^\star)}^2}$.
\end{remark}

We specify Theorem~\ref{thm:mcsgd_interpolation} under a Markovian sampling scheme in next corollary: the noise term at the optimum takes the form $\tau_\mix/T$.

\begin{corollary}[MC-SGD, interpolation]\label{thm:mcsgd_interpolation_step}
    Under Assumptions~\ref{hyp:local-sto} and \ref{hyp:noise_opti}, for $T\geq1$ and for a well chosen stepsize $\gamma>0$, the iterates generated by~\eqref{eq:mcsgd} satisfy:
    \begin{equation*}
    \begin{aligned}
        \esp{\NRM{\xx_T-\xx^\star}^2}&\leq 2e^{-\frac{T}{\kappa}}\NRM{\xx_0-\xx^\star}^2 \\ &+\Tilde\cO\left(\frac{L\tau_\mix\big(\frac{1}{4}\big)\sigma_\star^2}{\mu^3T}\right)\,.
        \end{aligned}
    \end{equation*}
\end{corollary}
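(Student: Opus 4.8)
The plan is to begin from the closed-form inequality of Theorem~\ref{thm:mcsgd_interpolation}, which already isolates the only chain-dependent object, the weighted sum of partial-sum second moments $\esp{\NRM{\sum_{s\le t<T}\nabla f_{v_t}(\xx^\star)}^2}$, and to bound this object through the mixing of $(v_t)$. The key starting remark is that $\xx^\star$ minimizes $f=\E_{v\sim\pi}[f_v]$, so $\nabla f(\xx^\star)=\E_{v\sim\pi}[\nabla f_v(\xx^\star)]=0$; writing $g_t=\nabla f_{v_t}(\xx^\star)$, the summands thus form a functional of the chain that is centered under the stationary law $\pi$ and bounded in norm by $\sigma_\star$ (Assumption~\ref{hyp:noise_opti}). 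Controlling $\esp{\NRM{\sum_{s\le t<T}g_t}^2}$ is therefore exactly the problem of bounding the variance of an additive functional of the Markov chain over a window of length $T-s$.

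The central step I would establish is a lemma of the form
\begin{equation*}
    \esp{\NRM{\sum_{s\le t<T}\nabla f_{v_t}(\xx^\star)}^2}\le C\,\tau_\mix(1/4)\,(T-s)\,\sigma_\star^2\,,
\end{equation*}
for a numerical constant $C$. I would expand the square as $\sum_{s\le t,t'<T}\esp{\lr{g_t,g_{t'}}}$ and treat each cross term with $t'\ge t$ by conditioning on the earlier state: $\esp{\lr{g_t,g_{t'}}}=\esp{\lr{g_t,\E[g_{t'}\mid v_t]}}$, where $\E[g_{t'}\mid v_t=v]=\sum_w\big((P^{t'-t})_{v,w}-\pi_w\big)\nabla f_w(\xx^\star)$ since the $\pi$-average of the gradients vanishes. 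Bounding this by the total-variation distance $\dd_{\rm TV}(P^{t'-t}\delta_v,\pi)$, which decays geometrically at a rate governed by $\tau_\mix(1/4)$ (Section~\ref{sec:mc}), gives $|\esp{\lr{g_t,g_{t'}}}|\le C'\sigma_\star^2\rho^{|t'-t|}$ with $(1-\rho)^{-1}$ of order $\tau_\mix(1/4)$; summing this geometric series over $t'$ produces the factor $\tau_\mix(1/4)$ and the linear dependence on $T-s$. Some care is needed because the chain need not be stationary at time $s$, but the initial transient is itself controlled by the same mixing estimate and only affects constants.

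With the lemma in hand, substituting into the second term of Theorem~\ref{thm:mcsgd_interpolation} and setting $u=T-s$ reduces the chain-dependent sum to a geometric series,
\begin{equation*}
    \sum_{0\le s\le T}(1-\gamma\mu)^{T-s}(T-s)\le\sum_{u\ge0}u(1-\gamma\mu)^u\le\frac{1}{(\gamma\mu)^2}\,,
\end{equation*}
so that the noise contribution is of order $\gamma^3TL\mu^{-1}\cdot\tau_\mix(1/4)\sigma_\star^2(\gamma\mu)^{-2}=\gamma TL\tau_\mix(1/4)\sigma_\star^2\mu^{-3}$, while the transient stays $2(1-\gamma\mu)^T\NRM{\xx_0-\xx^\star}^2$. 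It then remains to pick the stepsize: a value $\gamma$ of order $1/L$ contracts the transient at rate $(1-\mu/L)^T\le e^{-T/\kappa}$, and balancing the residual contraction against the accumulated noise with the appropriate time-horizon-dependent choice of $\gamma$ is what yields the advertised $\Tilde\cO\big(L\tau_\mix(1/4)\sigma_\star^2\mu^{-3}T^{-1}\big)$ term.

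I expect the Markov-chain variance lemma to be the main obstacle, since it is the only place where the chain's structure enters and where the correct power of $\tau_\mix(1/4)$ and the linear-in-window scaling must be pinned down, including the non-stationary first segment. The subsequent geometric summation is routine; the genuinely delicate bookkeeping is the stepsize selection, which must simultaneously realize the exponential transient decay and the $\Tilde\cO(1/T)$ noise floor, and I would expect this to require a horizon-dependent (or two-stage) schedule rather than a single fixed constant step.
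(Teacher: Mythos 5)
Your overall route coincides with the paper's own: the Markov-chain variance lemma you propose is exactly Lemma~\ref{lem:noise-optim} in Appendix~\ref{app:mcsgd_interpolation} (same expansion into diagonal and cross terms, same conditioning $\esp{\langle g_t,g_{t'}\rangle}=\esp{\langle g_t,\E[g_{t'}\mid v_t]\rangle}$ using $\E_{v\sim\pi}[\nabla f_v(\xx^\star)]=0$, same total-variation bound on the cross terms with geometric decay $\dd_{\rm TV}(r)\leq 2^{-r/\tau_\mix(1/4)+1}$ summed into a factor $\tau_\mix(1/4)$, and the same observation that non-stationarity at the start of the window only affects constants). The subsequent geometric summation and horizon-dependent stepsize are also the paper's steps.

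There is, however, a genuine gap at the final step, and your own arithmetic exposes it: from the noise term you derive, $\gamma TL\tau_\mix(1/4)\sigma_\star^2\mu^{-3}$, no stepsize can ``balance'' its way to the advertised bound. Making this term $\Tilde\cO\big(L\tau_\mix(1/4)\sigma_\star^2/(\mu^3T)\big)$ would force $\gamma=\Tilde\cO(1/T^2)$, under which $(1-\gamma\mu)^T\geq 1-\Tilde\cO(\mu/T)$ and the transient does not decay at all; conversely, any $\gamma\gtrsim\ln(\cdot)/(\mu T)$, needed for the contraction, leaves a noise floor $\Tilde\Omega\big(L\tau_\mix(1/4)\sigma_\star^2/\mu^4\big)$ that is constant in $T$. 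The culprit is the prefactor $\frac{\gamma^3TL}{\mu}$ in the main-text display of Theorem~\ref{thm:mcsgd_interpolation}, which you took at face value; the inequality actually established in the appendix proof is
\begin{equation*}
\E\NRM{\xx_T-\xx^\star}^2\leq 2(1-\gamma\mu)^T\Big(\NRM{\xx_0-\xx^\star}^2+\gamma^2\esp{\NRM{\textstyle\sum_{t<T}\nabla f_{v_t}(\xx^\star)}^2}\Big)+\gamma^3L\sum_{t<T}(1-\gamma\mu)^{T-t}\esp{\NRM{\textstyle\sum_{t\leq s<T}\nabla f_{v_s}(\xx^\star)}^2}\,,
\end{equation*}
with prefactor $\gamma^3L$ and no $T/\mu$. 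Plugging your lemma into this version, the last sum contributes $\gamma^3L\cdot C\tau_\mix(1/4)\sigma_\star^2/(\gamma\mu)^2=C\gamma L\tau_\mix(1/4)\sigma_\star^2/\mu^2$, the $(1-\gamma\mu)^T\gamma^2T$ term is of lower order via $(1-\gamma\mu)^TT\leq 1/(e\gamma\mu)$, and the paper's choice $\gamma=\min\big(1/L,\Tilde\Theta(1/(\mu T))\big)$ then simultaneously gives the $e^{-T/\kappa}$ transient and the $\Tilde\cO\big(L\tau_\mix(1/4)\sigma_\star^2/(\mu^3T)\big)$ floor. Your plan is otherwise sound; the missing move was to rederive (or at least sanity-check) the theorem's prefactor rather than inherit the spurious factor $T$, since with it the corollary is simply not a consequence of the stated inequality.
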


This result is stronger than Theorem~\ref{thm:mcsgd}.\ref{thm:mcsgd_PL}, for \emph{(i)} noise amplitude and gradient dissimilarities only need to be bounded at the optimum;
\emph{(ii)} the ‘‘optimization term'' (the first one) is not slowed down by the mixing time. 
This comes at the cost of strong convexity assumptions, stronger than a $\mu$-PL inequality for $f$.
The term $\frac{\tau_\mix\sigma_\star}{T}$ cannot be removed in the general case, as next proposition shows. Hence, since the two other terms have optimal dependency in terms of Markov-chain and noise related quantities, our analysis ends up being sharp. 

Corollary~\ref{thm:mcsgd_interpolation} together with the following proposition are an extension of \citet{nagaraj_least_squares_markovian}, who proved similar results for MC-SGD with constant stepsize on least square problems on Markovian data of a certain form (for linear online system identification).

\begin{proposition}\label{prop:lower_mcsgd}
For any $\cV$ (such that $|\cV|\geq 2$) and $\tau>1$, there exists a Markov chain on $\cV$ of relaxation time $\tau$, functions $(f_v)_{v\in\cV}$ and $\xx_0\in\R^d$ such that given any stepsize $\gamma$, the iterates of Equation~\eqref{eq:mcsgd} output $\xx_T$ for any $T>0$ verifying $\NRM{\xx_T-\xx_0}^2=\Tilde\Omega(\tau \sigma_\star^2/T)$, and the assumptions of Theorem~\ref{thm:mcsgd_interpolation} hold.
\end{proposition}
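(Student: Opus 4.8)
The plan is to build one hard instance---a one-dimensional quadratic coupled with a ``lazy restart'' chain of relaxation time exactly $\tau$---and to show that the accumulated Markovian noise forces an error of order $\tau\sigma_\star^2/T$ \emph{for every} constant stepsize $\gamma$. Take $\pi$ uniform on $\cV$ and $P=(1-\tfrac1\tau)\Id+\tfrac1\tau\,\tfrac{\one\one^\top}{n}$; all its entries are positive, so it is irreducible, aperiodic and has finite hitting-time moments, while its only non-trivial eigenvalue is $1-\tfrac1\tau$, giving $\tau_{\rm rel}=\tau$. I then pick weights $(\eta_v)\in\{-1,0,1\}$ with $\sum_v\eta_v=0$, $\max_v|\eta_v|=1$ and $\beta\eqdef\tfrac1n\sum_v\eta_v^2\geq\tfrac12$, and set $f_v(x)=\tfrac12(x-\sigma_\star\eta_v)^2$. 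Each $f_v$ is then $1$-smooth and $1$-strongly convex (Assumption~\ref{hyp:local-sto} with $\kappa=1$), $\xx^\star=0$, and $|\nabla f_v(\xx^\star)|^2=\sigma_\star^2\eta_v^2\leq\sigma_\star^2$ (Assumption~\ref{hyp:noise_opti}); I initialize at $x_0$ with $x_0^2=\tau\sigma_\star^2$ and start the chain from $\pi$.

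Writing $\epsilon_t=\eta_{v_t}$, MC-SGD reduces to the affine recursion $x_{t+1}=(1-\gamma)x_t+\gamma\sigma_\star\epsilon_t$, so $x_T=(1-\gamma)^Tx_0+\gamma\sigma_\star\sum_{t<T}(1-\gamma)^{T-1-t}\epsilon_t$. Since $\eta$ is orthogonal to $\one$ we have $P^k\eta=(1-\tfrac1\tau)^k\eta$, which yields $\esp{\epsilon_t}=0$ and the \emph{nonnegative} correlations $\esp{\epsilon_s\epsilon_t}=\beta\rho^{|s-t|}$ with $\rho=1-\tfrac1\tau$. Because $\esp{\epsilon_t}=0$, the error splits into a bias and a variance term, $\esp{(x_T-\xx^\star)^2}=(1-\gamma)^{2T}x_0^2+\gamma^2\sigma_\star^2\beta\sum_{s,t<T}(1-\gamma)^{2T-2-s-t}\rho^{|s-t|}$, and it suffices to show that each $\gamma$ makes at least one of the two terms $\Omega(\tau\sigma_\star^2/T)$.

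The crux is a lower bound on the variance that is uniform in $\gamma$. All summands are nonnegative, so I may keep only the block $I$ of the last $N\eqdef\min(T,\lfloor1/\gamma\rfloor)$ indices, on which every weight satisfies $(1-\gamma)^{T-1-t}\geq(1-\gamma)^{1/\gamma}\geq\tfrac14$. Using $\rho^k\geq\tfrac14$ for $k\leq\tau$ gives $\sum_{s,t\in I}\rho^{|s-t|}\gtrsim\min(N,\tau)\,N$---this is exactly the $\tau$-fold inflation of variance caused by the chain's memory---hence the variance is $\gtrsim\sigma_\star^2\min(1,\gamma\tau)$ whenever $\gamma\geq1/T$.

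I conclude by a case split, assuming $T\geq\tau$ (for $T<\tau$ the bound must fail, since $\gamma=1$ already attains error $\sigma_\star^2<\tau\sigma_\star^2/T$). If $\gamma<1/T$ the bias alone is $\geq(1-1/T)^{2T}x_0^2\geq\tfrac1{16}\tau\sigma_\star^2$; if $1/T\leq\gamma\leq1/\tau$ the variance is $\gtrsim\sigma_\star^2\gamma\tau\geq\sigma_\star^2\tau/T$; and if $\gamma>1/\tau$ the variance is $\gtrsim\sigma_\star^2\geq\sigma_\star^2\tau/T$. Thus $\esp{(x_T-\xx^\star)^2}=\Omega(\tau\sigma_\star^2/T)$ for all $\gamma$, and balancing bias against variance at $\gamma\asymp\log(T)/T$ even recovers the extra logarithm, matching the noise term of Corollary~\ref{thm:mcsgd_interpolation_step} up to logarithmic factors. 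The main obstacle is precisely this uniform-in-$\gamma$ control: one cannot initialize at $\xx^\star$ (a vanishing stepsize would then drive the error to $0$), so the bias must be kept alive to penalize small $\gamma$, while the geometrically weighted quadratic form $\sum_{s,t}w_sw_t\rho^{|s-t|}$ is handled by the positive-correlation restriction to an $\Omega(1/\gamma)$-length block.
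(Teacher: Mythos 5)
Your proof is correct, and it rests on the same core mechanism as the paper's own proof: one-dimensional quadratics $\tfrac12(x\mp\sigma_\star)^2$ split across a slow chain, so that MC-SGD becomes an affine AR(1) recursion whose exogenous noise has nonnegative covariance $\esp{\epsilon_s\epsilon_t}=\beta\rho^{|t-s|}$ and accumulates to a variance of order $\min(1,\gamma\tau)\,\sigma_\star^2$, which together with the bias $(1-\gamma)^{2T}x_0^2$ forces the $\tau\sigma_\star^2/T$ trade-off. The execution differs in two ways, both to your credit. The paper fixes $\cV=\set{0,1}$ with the symmetric two-state chain ($p_{01}=p_{10}=p$), computes the covariance sums in closed form, and then argues asymptotically: to reach precision $\eps$ the cross term $\sim\gamma/(4p)$ forces $\gamma\lesssim p\eps$, and the bias then forces $T=\Tilde\Omega(1/(p\eps))$ as $\gamma T\to\infty$. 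Your lazy restart chain $P=(1-\tfrac1\tau)\Id+\tfrac1\tau\,\one\one^\top/n$ handles every $|\cV|=n\geq2$ as the statement literally requires (the paper only treats $n=2$; note your chain coincides with the paper's when $n=2$ and $p=1/(2\tau)$), and your block restriction $\sum_{s,t\in I}\rho^{|s-t|}\gtrsim N\min(N,\tau)$ with $N=\min(T,\lfloor1/\gamma\rfloor)$ replaces the exact geometric sums and asymptotics by a clean nonasymptotic, uniform-in-$\gamma$ case split, with the scaled initialization $x_0^2=\tau\sigma_\star^2$ making the small-stepsize penalty explicit rather than implicit in an $\eps\to0$ limit. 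Two minor points to tidy: the weight bound $(1-\gamma)^{1/\gamma}\geq\tfrac14$ only holds for $\gamma\leq\tfrac12$ (for $\gamma>\tfrac12$ take $N=1$, where the diagonal term alone already gives $\gamma^2\beta\sigma_\star^2\gtrsim\sigma_\star^2$, and similarly the bound $\rho^k\geq\tfrac14$ for $k\leq\tau$ needs $\tau\geq2$, the regime $1<\tau<2$ being covered by the diagonal alone); and your restriction to $T\geq\tau$ is consistent with the paper, whose argument is likewise asymptotic in $T$ --- indeed your observation that $\gamma=1$ beats the claimed bound when $T<\tau$ correctly identifies the regime in which the $\Tilde\Omega$ statement is meaningful.
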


%\subsection{The case of online linear system identification}
%We now present applications of Theorem~\ref{thm:mcsgd_interpolation} to online linear system identification. We borrow the framework of \citet{kowshik2021streaming}, and consider streaming observations $(v_t)_{t\geq0}$ in $\R^p$, following the recursion
%\begin{equation}    v_{t+1}=\xx^\star v_t +\eta_t\,,\end{equation}
%where $\xx^\star\in\R^{p\times p}=\R^d$ (for $d=p^2$) is a matrix to identify, and $(\eta_t)_{t\geq0}$ are \emph{i.i.d.} centered random variables. $(v_t)$ is a Markov chain of state space $\cV=\R^p$, and we denote $\pi$ its stationary distribution (whose existence we assume). Let:
%\begin{equation*}\begin{aligned}f(\xx)&=\esp{\frac{1}{2}\NRM{(\xx-\xx^\star)v}^2|v\sim\pi}\\&=\esp{\frac{1}{2}\NRM{\xx v_0-v_1}^2|v_0\sim\pi}-\esp{\frac{1}{2}\NRM{\eta_0}^2}\,,\end{aligned}\end{equation*}
%for $\xx\in\R^d$, and its regularized version, $f^\lambda(\xx)=f(\xx)+\frac{\lambda}{2}\NRM{\xx}^2$, that we aim at minimizing.

\subsection{MC-SGD with local noise}\label{sec:localnoise}

In the two previous subsections, we analyzed SGD with Markovian sampling schemes, where the stochasticity only came from the Markov chain $(v_k)_{k\geq0}$.
We now generalize the analysis and results to SGD with both Markovian sampling, and local noise, by studying the sequence:
\begin{equation}\label{eq:localnoise}
    \xx_{t+1}=\xx_t - \gamma_t \gg_t\,.
\end{equation}
We now formulate the form stochastic gradients $\gg_t$ can take.

\begin{assumption}\label{hyp:local_noise}
    For all $v\in\cV$, the function $f_v$ satisfies $f_v(\xx)=\esp{F_v(\xx,\xi_v)}$ for all $\xx\in\R^d$, where $\xi_v\sim\cD_v$.
    Furthermore, there exists a Markov-chain $(v_t)_{t\geq0}$ such that for all $t\geq 0$, \[\gg_t= \nabla_\xx F_{v_t}(\xx_t,\xi_t)\,,\] where $\xi_t\sim \cD_{v_t} | v_t$ is independent from $v_0,\ldots,v_{t-1}$ and $\xi_0,\ldots,\xi_{t-1}$.
\end{assumption}

A direct consequence of Assumption~\ref{hyp:local-sto} is that $\esp{\gg_t|\xx_t,v_t}=\nabla f_{v_t}(\xx_t)$.
Two main applications of Assumption~\ref{hyp:local-sto} are:
\begin{enumerate}
    \item \textbf{Local sampling.} If $f_v(\xx)=\frac{1}{m}\sum_{i=1}^m f_{v,i}(\xx)$ (agent $v$ has $m$ local samples), agent $m$ may use only a batch $\cB\subset[m]$ of its samples, leading to stochastic gradients $\gg_t$ in \eqref{eq:localnoise} of the form:
    \begin{equation*}
        \gg_t= \frac{1}{|\cB_t|}\sum_{i\in\cB_t}\nabla f_{v_t,i}(\xx_t)\,,
    \end{equation*}
    for random batches $(\cB_t)_{t\geq0}$.
    \item \textbf{Differential privacy.} Adding local noise (\emph{e.g.}, additive Gaussian random noise) enforces differential privacy under suitable assumptions.
    A private decentralized token algorithm is then \textbf{Differentially Private MC-SGD} (DP-MC-SGD), with iterates \eqref{eq:localnoise} where $\gg_t$ satisfies
    \begin{equation}
        \gg_t= \nabla f_{v_t}(\xx_t) + \eta_t\,,
    \end{equation} 
    where $(v_t)$ is the Markov chain (random walk performed by the token on the communication graph), and $\eta_t\sim\cN(0,\sigma_t^2 I_d)$ is sampled independently from the past, to enforce differential privacy.
\end{enumerate}

Under Assumption~\ref{hyp:local-sto}, a direct generalization of Theorem~\ref{thm:mcsgd_interpolation} and Corollary~\ref{thm:mcsgd_interpolation_step} is the following.

\begin{theorem}[MC-SGD with local noise]\label{thm:local_noise}
    Assume that Assumptions~\ref{hyp:local_noise},\ref{hyp:noise_opti} holds, each $F_v(\cdot,\xi)$ is $\mu$-strongly convex $L$-smooth, and there exists $\sigma_{\rm local}^2$ such that:
    \begin{equation*}
        \esp{\NRM{\gg_t-\nabla f_{v_t}(\xx_t)}^2|\xx_t,v_t}\leq \sigma^2_{\rm local}\,.
    \end{equation*}
    Then, for a well chosen $\gamma>0$, the iterates generated by \eqref{eq:localnoise} satisfy:
    \begin{align*}
        \esp{\NRM{\xx_T-\xx^\star}^2}&\leq 2e^{-\frac{T}{\kappa}}\NRM{\xx_0-\xx^\star}^2 \\ 
        &+\Tilde\cO\left(\frac{L}{\mu^3T}\big(\sigma_{\rm local}^2+\tau_\mix\big(\frac{1}{4}\big)\sigma_\star^2 \big)\right)\,.
    \end{align*}
\end{theorem}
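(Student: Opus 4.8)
The plan is to piggyback on Theorem~\ref{thm:mcsgd_interpolation} and Corollary~\ref{thm:mcsgd_interpolation_step}, treating the local noise $\zeta_t\eqdef\gg_t-\nabla f_{v_t}(\xx_t)$ as an additive perturbation that enters the recursion exactly parallel to the ``gradient-at-the-optimum'' term $\nabla f_{v_t}(\xx^\star)$. Since each $F_v(\cdot,\xi)$ (hence $f_v$) is $\mu$-strongly convex and $L$-smooth and, by Assumption~\ref{hyp:local_noise}, $\esp{\gg_t\mid\xx_t,v_t}=\nabla f_{v_t}(\xx_t)$, I would first rewrite one step of~\eqref{eq:localnoise} as
\[
\xx_{t+1}-\xx^\star=\big(\xx_t-\gamma\nabla f_{v_t}(\xx_t)-(\xx^\star-\gamma\nabla f_{v_t}(\xx^\star))\big)-\gamma\nabla f_{v_t}(\xx^\star)-\gamma\zeta_t\,,
\]
where, for $\gamma L<1$, the bracketed gradient-map difference contracts $\xx_t-\xx^\star$ with factor at most $1-\gamma\mu$. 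Thus the only change relative to the noiseless recursion analysed in Theorem~\ref{thm:mcsgd_interpolation} is that the additive term $-\gamma\nabla f_{v_t}(\xx^\star)$ becomes $-\gamma(\nabla f_{v_t}(\xx^\star)+\zeta_t)$.

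Since Theorem~\ref{thm:mcsgd_interpolation} is established for an arbitrary ordering $(v_t)$, and inspecting its proof one sees it tolerates an arbitrary additive perturbation of the update, I would re-run it with the perturbation $-\gamma(\nabla f_{v_t}(\xx^\star)+\zeta_t)$. This yields the bound of Theorem~\ref{thm:mcsgd_interpolation} with the driving term $\esp{\NRM{\sum_{s\le t<T}\nabla f_{v_t}(\xx^\star)}^2}$ replaced by $\esp{\NRM{\sum_{s\le t<T}(\nabla f_{v_t}(\xx^\star)+\zeta_t)}^2}$. The crux is then to show this squared norm splits additively into the original Markov-chain term plus a pure local-variance term.

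For that I would condition on the entire chain trajectory $\cG\eqdef\sigma\big((v_t)_{t\ge0}\big)$ and observe that, under Assumption~\ref{hyp:local_noise}, $(\zeta_t)_t$ is then a mean-zero martingale-difference sequence: given $\cG$ and $\xi_0,\dots,\xi_{t-1}$ the iterate $\xx_t$ and the state $v_t$ are determined, so $\esp{\zeta_t\mid\cG,\xi_0,\dots,\xi_{t-1}}=0$ and $\esp{\NRM{\zeta_t}^2\mid\cG}\le\sigma_{\rm local}^2$. As the vectors $\nabla f_{v_t}(\xx^\star)$ are $\cG$-measurable, all cross terms $\esp{\lr{\nabla f_{v_s}(\xx^\star),\zeta_t}\mid\cG}$ and $\esp{\lr{\zeta_s,\zeta_t}\mid\cG}$ with $s\neq t$ vanish, giving
\[
\esp{\NRM{\sum_{s\le t<T}(\nabla f_{v_t}(\xx^\star)+\zeta_t)}^2\,\Big|\,\cG}=\NRM{\sum_{s\le t<T}\nabla f_{v_t}(\xx^\star)}^2+\sum_{s\le t<T}\esp{\NRM{\zeta_t}^2\mid\cG}\le\NRM{\sum_{s\le t<T}\nabla f_{v_t}(\xx^\star)}^2+(T-s)\sigma_{\rm local}^2\,.
\]
Taking expectation over $\cG$ recovers the term already controlled in Corollary~\ref{thm:mcsgd_interpolation_step} (which produces the $\tau_\mix(1/4)\sigma_\star^2$ contribution via the Markov-chain bound) plus the extra $(T-s)\sigma_{\rm local}^2$. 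Carrying this extra term through the same geometric summation over $s$ and the same stepsize choice as in Corollary~\ref{thm:mcsgd_interpolation_step} yields the announced additional $\Tilde\cO\big(\tfrac{L}{\mu^3T}\sigma_{\rm local}^2\big)$, hence the stated bound.

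I expect the only genuinely delicate point to be the vanishing of the cross terms $\esp{\lr{\nabla f_{v_s}(\xx^\star),\zeta_t}}$ for $s>t$: there $\nabla f_{v_s}(\xx^\star)$ depends on \emph{future} states of the chain, so orthogonality does not follow from ``$\zeta_t$ is independent of the past'' alone. It relies on the exogeneity of the Markov chain---its transitions never use $\xi_t$---which is exactly what makes $\zeta_t$ independent of $(v_{t+1},v_{t+2},\dots)$ given $v_t$, and therefore lets the single conditioning on $\cG$ annihilate all cross terms simultaneously. Everything else is a direct transcription of the proofs of Theorem~\ref{thm:mcsgd_interpolation} and Corollary~\ref{thm:mcsgd_interpolation_step}.
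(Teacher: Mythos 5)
Your proposal is correct, and it reaches the stated bound through the same overall architecture as the paper --- a perturbed version of the auxiliary-sequence argument behind Theorem~\ref{thm:mcsgd_interpolation}, followed by the summed-noise bound of Lemma~\ref{lem:noise-optim} and the stepsize choice of Corollary~\ref{thm:mcsgd_interpolation_step} --- but it differs in one genuine respect: where and how the local noise enters. The paper drives the comparison sequence by the \emph{stochastic gradient at the optimum}, $\yy_{t+1}=\yy_t-\gamma\nabla_\xx F_{v_t}(\xx^\star,\xi_t)$, so the contraction step compares the gradient maps of each realization $F_{v_t}(\cdot,\xi_t)$ and therefore needs the realizationwise $\mu$-strong convexity and $L$-smoothness assumed in the theorem; the Markov term and the local-variance term are then separated by a generic factor-$2$ inequality (Lemma~15 of \citet{stich2021errorfeedback}), which implicitly uses the noise bound at $\xx^\star$ rather than at $\xx_t$ as literally stated in the assumption. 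You instead keep the noise at the iterate, $\zeta_t=\gg_t-\nabla f_{v_t}(\xx_t)$, and inject it into \emph{both} the actual and comparison updates, so that $\zeta_t$ cancels in $\xx_{t+1}-\yy_{t+1}$ and the contraction lemma applies verbatim to the mean functions $f_{v_t}$ --- requiring only their regularity, not that of each realization --- while the split of $\esp{\NRM{\sum_{s\le t<T}(\nabla f_{v_t}(\xx^\star)+\zeta_t)}^2}$ is done exactly (no factor $2$) via the martingale-difference orthogonality of $(\zeta_t)$ conditionally on the whole chain trajectory. You correctly identify that this last step is the delicate one: the vanishing of cross terms with \emph{future} $\nabla f_{v_s}(\xx^\star)$ does not follow from independence of the past alone, but from exogeneity of the chain (its transitions never use the $\xi_t$), which is precisely the setting the paper works in and which makes $\esp{\zeta_t\mid\cG,\xi_0,\dots,\xi_{t-1}}=0$ with $\cG=\sigma((v_t)_{t\ge0})$. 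Net effect: your route uses the noise assumption exactly as stated (at $\xx_t$) and weaker convexity hypotheses, at the cost of a more careful conditioning argument; the paper's route trades that conditioning for realizationwise convexity and a black-box lemma with a factor $2$ that is absorbed by the $\Tilde\cO$. Both are valid and yield the same rate.
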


Importantly, and as one would have expected, local noise is \textbf{not} impacted by the mixing time of the underlying random walk.
While we did not pursue in this direction, this observation could easily be made under other regularity assumptions, and such a result would hold for instance under the assumptions of Theorem~\ref{thm:mcsgd_nonsmooth} or \ref{thm:mcsgd_PL}.
While Differentially Private MC-SGD sounds appealing for performing decentralized and differentially private optimization, we here only provided a utility analysis, the privacy analysis being left for future works.

\section{Analysis of Markov-Chain SAG}\label{sec:mcSAG}

\begin{algorithm}[h]
    \caption{Markov Chain SAG (MC-SAG)}
    \label{algo:mcSAG}
    \begin{algorithmic}[1]
    \STATE \textbf{Input:} 
    %$\xx_0\in \R^{d}$, $\hh_v=\nabla f_v(\xx_0)$ for $v\in\cV$ and $\bar\hh_0=0$, stepsizes $\gamma_t>0$, $v_0\in\cV$
    $\xx_0\in \R^{d}$, $\hh_v\in\R^d$ for $v\in\cV$ and $\bar\hh_0\in\R^d$, stepsizes $\gamma_t>0$, $v_0\in\cV$
%    \While{all agents have not been visited}
 %       \State $\Bar h_0\leftarrow \Bar h_0 + \frac{1}{n}h_{v_t}$ if $v_t$ has not yet been visited
  %  \EndWhile
    %\State $v_0\in\cV$ the current state of the random walk
   % \State $v_0\longleftarrow v$ where $v$ is the current state of the random walk
    \FOR{$t = 0,1,\dots$}
        \STATE %Upon reception of $(\xx_t,\bar\hh_t)$, 
        Compute $\nabla f_{v_t}(\xx_t)$
        \STATE $\bar\hh_{t+1}=\bar\hh_t+\frac{1}{n}\big(\nabla f_{v_t}(\xx_t)-\hh_{v_t}\big)$
        \STATE $\xx_{t+1}=\xx_t - \gamma_t\bar\hh_{t+1}$% where $\GG_t=\nabla f_{v_t}(\xx_t) -\hh_{v_t} + \bar\hh_t$ \mathieu{A CHANGER !!!!}
        \STATE $\hh_{v_t}\longleftarrow\nabla f_{v_t}(\xx_t)$
        \STATE Sample $v_{t+1}\sim P_{v_t,\cdot}$
     \ENDFOR
    \end{algorithmic}	
\end{algorithm}

After providing convergence guarantees for the most natural algorithm (MC-SGD) under a Markov chain sampling on the set $\cV$, we prove that one can achieve a rate of order $1/T$ (rather than the $1/\sqrt{T}$ previously obtained) in the smooth setting, by applying the variance reduction techniques present in \citet{SAG}, that first introduced the Stochastic Averaged Gradient algorithm, together with a time-adaptive stepsize policy described below.
Our faster rate with variance reduction leads of a dependency on $\tau_\hit$ instead of $\tau_\mix$; since we do not make any other assumption other than smoothness, this is unavoidable in light of our lower bound (Theorem~\ref{thm:lower}).

\paragraph{MC-SAG}
The MC-SAG algorithm is described in Algorithm~\ref{algo:mcSAG}.
%Before performing gradient steps, an initialization phase requires the random walk to cover all nodes in the graph, to initialize variable $\bar\hh_0$. Since the cover time (Definition~\ref{def:times}) satisfies $\tau_\cov\leq\ln(n)\tau_\hit$, in light of the convergence guarantees we prove next, this initialization phase does not impact the total number of communications.
The recursion leading to the iterate $\xx_t$ can then be summarized as, for stepsizes $(\gamma_t)_{t\geq0}$, under the initialization $\hh_v=\nabla f_v(\xx_0)$ and $\bar \hh=\nabla f(\xx_0)$:
\begin{equation}\label{eq:mcSAG}
    \xx_{t+1}=\xx_t -\frac{\gamma_t}{n}\sum_{v\in\cV} \nabla f_v(\xx_{d_v(t)})\,,
\end{equation}
where for $v\in\cV$, we define $d_v(t)=\sup\set{s\leq t\,|\, v_s=v}$ as the last previous iterate at which $v$ was the current state of the Markov chain. By convention, if the set over which the supremum is taken is empty, we set $d_v(t)=0$.
We handle both the initialization described just above for $\hh_v,\bar\hh$ and arbitrary initialization in our analysis below.

In the same way that MC-SGD reduces to vanilla SGD if $(v_t)$ is an \emph{i.i.d.} uniform sampling over $\cV$, MC-SAG boils down to the SAG algorithm \citep{SAG} in that case and under the initialization $\hh_v=\nabla f_v(\xx_0)$ and $\bar \hh=\nabla f(\xx_0)$.
%In Algorithm~\ref{algo:mcSAG}, the MC-SAG recursion of Equation~\eqref{eq:mcSAG} is presented i
In a decentralized setting, nodes keep in mind their last gradient computed (variable $\hh_v$ at node $v$). At all times, $\bar\hh_t$ is an average of these $\hh_v$ over the graph, and is, in the same way as $\xx_t$, updated along the random walk.
The MC-SAG algorithm is thus perfectly adapted to decentralized optimization.

\paragraph{Time-adaptive stepsize policy}
To obtain our convergence guarantees, a time-adaptive stepsize policy $(\gamma_t)$ is used, as in Asynchronous SGD \citep{mischenko_asynchronous_sgd} to obtain delay-independent guarantees. For $t\geq0$, let the stepsize $\gamma_t$ be defined as:
\begin{equation}\label{eq:stepsizes}
    \gamma_t=  \frac{1}{2L\big(\tau_\hit + \max_{v\in\cV}(t-d_v(t))\big)}\,.
\end{equation}
Denoting $\tau_t=\max_{v\in\cV}(t-d_v(t))$, this quantity can be tracked down during the optimization process. Indeed, if agent $v_t$ receives $\tau_{t-1}$ together with $(\xx_t,\bar\hh_t)$, she may compute $\tau_{t}$ as:
\begin{equation*}
    \tau_t= \max\big(\tau_{t-1}+1\,,\, t-d_{v_t}(t)\big)\,,
\end{equation*}
where $t-d_{v_t}(t)$ is the number of iterations that took place since the last time the Markov chain state was $v_t$.
Hence, if agents keep track of the number of iterations, the adaptive stepsize policy \eqref{eq:stepsizes} can be used in Algorithm~\ref{algo:mcSAG}, as long as agent $v_t$ sends $(\tau_t,t)$ to $v_{t+1}$, yielding the following result.

We now present the convergence results for MC-SAG. 
$(v_t)$ is in this section assumed to be a Markov chain on $\cV$ of finite hitting time $\tau_\hit$. Importantly, the next Theorem does not require any additional assumption on $(v_t)$ such as reversibility, or even that it has a stationary probability that is the uniform distribution: the non-symmetric but easily implementable transition probabilities $P_{v,w}=1/(d_v+1)$ for $w=v$ or $w\sim v$ can be used here, as well as non-reversible random walks than can have much smaller mixing and hitting times. 
The function $f$ studied is here independent of the Markov chain, and is defined as in \eqref{eq:f}, the uniformly averaged function over all states $v\in\cV$ (or over all agents in the network).

\begin{theorem}[MC-SAG]\label{thm:mcSAG_varying}
	Assume that Assumption~\ref{hyp:smooth} holds and that the Markov chain has a finite hitting time (for an arbitrary invariant probability).
	\begin{enumerate}
	    \item \label{thm:mcSAG_varying_init} Under the initialization: 
	    \begin{equation*}
	        \begin{aligned}
	            \hh_v&=\nabla f_v(\xx_0)\,,\\
	            \bar\hh&=\nabla f(\xx_0)\,,
	        \end{aligned}
	    \end{equation*}
     using the adaptive stepsize policy defined in Equation~\eqref{eq:stepsizes}, the sequence generated by Algorithm~\ref{algo:mcSAG} satisfies, for any $T>0$:
	\begin{equation*}
		\esp{\min_{t<T}\NRM{\nabla f(\xx_t)}^2}\leq 8L\big(f(\xx_0)-f(\xx^\star)\big)\frac{\tau_\hit\ln(n)}{T}\,.
	\end{equation*}
    	  
    	  \item \label{thm:mcSAG_varying_arbitrary} Under any arbitrary initialization that satisfies $\bar\hh=\frac{1}{n}\sum_{v\in\cV} \hh_v$,
     using the adaptive stepsize policy defined in Equation~\eqref{eq:stepsizes}, the sequence generated by Algorithm~\ref{algo:mcSAG} satisfies, for any $T>0$:
	\begin{equation*}
		\esp{\min_{t<T}\NRM{\nabla f(\xx_t)}^2}\leq 16L\Delta\frac{\tau_\hit\ln(n)}{T}\,,
	\end{equation*}
    where \[\Delta=f(\xx_0)-f(\xx^\star)+\frac{1}{8n}\sum_{v\in\cV}\NRM{\nabla f_v(\xx_0)-\hh_v}^2\,.\]
    \end{enumerate}
\end{theorem}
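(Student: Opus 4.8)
The plan is to run the standard $L$-smoothness descent argument on $f$ along the iterates \eqref{eq:mcSAG}, treating the averaged stale gradient $\gg_t=\frac1n\sum_{v\in\cV}\nabla f_v(\xx_{d_v(t)})$ as a biased surrogate for $\nabla f(\xx_t)$, and showing that the adaptive stepsize \eqref{eq:stepsizes} is calibrated precisely so that the bias is paid for by the hitting time. First I would write the descent inequality and apply the polarization identity $\langle a,b\rangle=\tfrac12(\NRM{a}^2+\NRM{b}^2-\NRM{a-b}^2)$; since $f$ is $L$-smooth and $\gamma_t\leq\tfrac1{2L}$, this gives
\begin{equation*}
f(\xx_{t+1})\leq f(\xx_t)-\frac{\gamma_t}{2}\NRM{\nabla f(\xx_t)}^2-\frac{\gamma_t}{2}(1-L\gamma_t)\NRM{\gg_t}^2+\frac{\gamma_t}{2}\NRM{\nabla f(\xx_t)-\gg_t}^2 .
\end{equation*}
The first term is the quantity we want to accumulate, the second is a negative reserve, and the third is the delay-induced bias to be absorbed by the reserve.

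Second, I would bound the bias using the $L$-smoothness of each $f_v$, Jensen, and Cauchy--Schwarz over each delay window: writing $\delta_v(t)=t-d_v(t)$,
\begin{equation*}
\NRM{\nabla f(\xx_t)-\gg_t}^2\leq\frac{L^2}{n}\sum_{v\in\cV}\delta_v(t)\sum_{s=d_v(t)}^{t-1}\gamma_s^2\NRM{\gg_s}^2 .
\end{equation*}
Summing over $t<T$ (telescoping the left side into $f(\xx_0)-f(\xx_T)\leq f(\xx_0)-f(\xx^\star)$) and swapping the order of summation, the coefficient multiplying each $\gamma_s^2\NRM{\gg_s}^2$ becomes $\frac{L^2}{2n}\sum_{t>s}\gamma_t\sum_{v\,:\,d_v(t)\leq s}\delta_v(t)$. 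Using that $\gamma_t\delta_v(t)\leq\gamma_t\tau_t\leq\frac1{2L}$ by the very definition \eqref{eq:stepsizes}, this coefficient is at most $\frac{L}{4n}\sum_{t>s}N_s^t$, where $N_s^t$ is the number of states not visited during $(s,t]$; and $\sum_{t>s}N_s^t=\sum_{v\in\cV}(H_v^{(s)}-1)$, where $H_v^{(s)}$ is the first time after $s$ at which the chain visits $v$.

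Third --- and this is the crux --- I would absorb the bias into the negative reserve in expectation. By the strong Markov property, conditionally on the trajectory up to $s$ the quantity $\sum_{v\in\cV}H_v^{(s)}$ depends only on the current state $v_s$, and $\esp{\sum_{v\in\cV}H_v^{(s)}\mid v_s}\leq n\tau_\hit$ by the definition of the hitting time. Since $\gamma_s^2\NRM{\gg_s}^2$ is measurable with respect to that trajectory, taking total expectation shows the accumulated bias is at most $\frac{L\tau_\hit}{4}\sum_s\esp{\gamma_s^2\NRM{\gg_s}^2}$, and the bound $\gamma_s\leq\frac1{2L\tau_\hit}$ (again from \eqref{eq:stepsizes}) makes this no larger than the reserve $\sum_s\esp{\frac{\gamma_s}{2}(1-L\gamma_s)\NRM{\gg_s}^2}$. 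The $\NRM{\gg_s}^2$ terms therefore cancel in expectation and we are left with $\esp{\sum_{t<T}\gamma_t\NRM{\nabla f(\xx_t)}^2}\leq 2\big(f(\xx_0)-f(\xx^\star)\big)$. I expect this absorption to be the main obstacle: it requires both the summation-order bookkeeping that turns the random delays into per-node hitting times and the Markovian conditioning that replaces $\sum_{v\in\cV}H_v^{(s)}$ by its mean $n\tau_\hit$ --- which is exactly why the stepsize is built around $\tau_\hit$ rather than the cover time. Care is also needed because the random weights $\gamma_s$ and the gradients are correlated through the same trajectory.

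Finally, I would convert this weighted bound into the stated $\min$-bound. Since $\min_{t<T}\NRM{\nabla f(\xx_t)}^2\cdot\sum_{t<T}\gamma_t\leq\sum_{t<T}\gamma_t\NRM{\nabla f(\xx_t)}^2$ and $\gamma_t=\frac1{2L(\tau_\hit+\tau_t)}$, a harmonic-/arithmetic-mean argument applied to the total weight, together with $\esp{\tau_t}\leq\tau_\cov\leq\ln(n)\tau_\hit$ (Matthews' bound, recalled in Section~\ref{sec:mc}), shows that the effective number of full-strength steps is of order $T/(\tau_\hit\ln n)$, which yields the claimed rate $8L(f(\xx_0)-f(\xx^\star))\tau_\hit\ln(n)/T$. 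For part~\ref{thm:mcSAG_varying_arbitrary}, the only change is that an arbitrary initialization $\hh_v$ (with $\bar\hh=\frac1n\sum_{v\in\cV}\hh_v$) leaves an extra initial staleness $\frac1{8n}\sum_{v\in\cV}\NRM{\nabla f_v(\xx_0)-\hh_v}^2$ in the first window of the telescoped bias sum; carrying it through the same computation replaces $f(\xx_0)-f(\xx^\star)$ by $\Delta$ and costs one extra factor of $2$, giving the constant $16$.
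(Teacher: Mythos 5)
Your steps (1)--(3) coincide with the paper's proof: the polarization-based descent lemma, the window Cauchy--Schwarz using $\gamma_t\delta_v(t)\leq 1/(2L)$, the summation swap that turns the double sum into per-node next-visit times $p_v(s)-s$, and the strong-Markov conditioning $\esp{p_v(s)-s\mid \cF_s}\leq\tau_\hit$ absorbed by $\gamma_s\leq 1/(2L\tau_\hit)$ are exactly the paper's Lemma on descent and its $K_t$ argument. The genuine gap is in your final step. You cancel the $\NRM{\GG_s}^2$ terms in the \emph{unnormalized} expectation, obtaining $\esp{\sum_{t<T}\gamma_t\NRM{\nabla f(\xx_t)}^2}\leq 2F_0$ (this intermediate bound is valid), and then try to divide by the total weight. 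But $\min_{t<T}\NRM{\nabla f(\xx_t)}^2\leq \big(\sum_t\gamma_t\NRM{\nabla f(\xx_t)}^2\big)/\big(\sum_t\gamma_t\big)$ only holds pathwise, and $\sum_t\gamma_t$ is random and correlated with the numerator (slow excursions of the chain simultaneously shrink $\sum_t\gamma_t$ and inflate the gradients), so from $\esp{\min\cdot\sum\gamma_t}\leq 2F_0$ you cannot extract $\esp{\min}$ by dividing through by $\esp{\sum\gamma_t}$ or a lower bound on it --- that would need a positive-correlation inequality you do not have, and the plausible correlation here points the wrong way. The paper resolves this by never forming the unnormalized bound: it divides by $\sum_{t<T}\gamma_t$ \emph{before} taking expectations and invokes a dedicated martingale-type lemma (Lemma~\ref{lem:frac_rdm}: if $\esp{a_t\mid\cF_t}\leq0$ and $b_t>0$ is $\cF_t$-measurable, then $\esp{\sum_t a_t/\sum_t b_t}\leq0$) so that the $\NRM{\GG_t}^2$ cancellation survives the random normalization; only then does it apply Cauchy--Schwarz in the form $1/\sum_t\gamma_t\leq \sum_t\gamma_t^{-1}/T^2$.

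A second, smaller elision: you assert $\esp{\tau_t}\leq\tau_\cov$ as if immediate, but $\tau_t=\max_{v}(t-d_v(t))$ is a \emph{backward} quantity, while the Markov property only controls forward times. The paper's Lemma~\ref{lem:stepsizes_bound} converts one into the other via the event identity $\set{A_t\geq k}=\set{B_{t-k}\geq k}$ (both saying some state is unvisited on an interval of length $k$) and only establishes the Ces\`aro bound $\sum_{t<T}\esp{A_t}\leq T\tau_\cov$, which suffices for $\sum_{t<T}\esp{\gamma_t^{-1}}\leq 2LT(\tau_\hit+\tau_\cov)$; a pointwise bound on $\esp{\tau_t}$ is not what is proved nor obviously true for a non-stationary start. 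Your treatment of part~\ref{thm:mcSAG_varying_arbitrary} (initial staleness active until the first cover time $\tilde\tau_\cov$, with $\esp{\tilde\tau_\cov}\leq\tau_\cov$ and a halved stepsize costing the factor $2$) matches the paper, but it inherits the same normalization gap.
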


Theorem~\ref{thm:mcSAG_varying} is proved in Appendix~\ref{thm:mcSAG_varying}.
Up to the logarithmic factor in $n$, the rates in Theorem~\ref{thm:mcSAG_varying} are the non-accelerated versions of the lower-bound in Theorem~\ref{thm:lower}.

%We now argue that the naive approach consisting in running the Markov chain to sample all stochastic gradients along the graph before performing a full gradient step fails: let $\tau_0=0$ and for $k\geq1$, $\tau_k=\inf\set{t\geq\tau_{k-1}|\cV=\set{v_{\tau_{k-1},\ldots,v_{t-1}}}}$. Between $\tau_k$ and $\tau_{k+1}$, the optimizer may aggregate local gradients to form a full gradient $\nabla f(\xx_{\tau_k})$, mimicking gradient descent and leading to aBefore performing gradient steps and in order to use the initialization of Theorem~\ref{thm:mcSAG_varying}.\ref{thm:mcSAG_varying_init}, an initialization phase requires the random walk to cover all nodes in the graph, to initialize variable $\bar\hh_0$.
%Since the cover time (Definition~\ref{def:times}) satisfies $\tau_\cov\leq\ln(n)\tau_\hit$. %in light of the convergence guarantees we proved, this initialization phase does not impact the total number of communications.

%We now argue that the naive approach consisting in running the Markov chain to sample all stochastic gradients along the graph before performing a full gradient step fails: let $\tau_0=0$ and for $k\geq1$, $\tau_k=\inf\set{t\geq\tau_{k-1}|\cV=\set{v_{\tau_{k-1},\ldots,v_{t-1}}}}$. Between $\tau_k$ and $\tau_{k+1}$, the optimizer may aggregate local gradients to form a full gradient $\nabla f(\xx_{\tau_k})$, mimicking gradient descent and leading to rates of the form 

\section{Discussion of our results}\label{sec:discussion}

\subsection{Communication efficiency: comparison of our results with consensus-based approaches}

We summarize the communication efficiencies in Table~\ref{tb:rates} (in terms of total number of communications required to reach an $\eps$-stationary point), of classical gossip-based decentralized gradient methods (non-accelerated, since no accelerated method is known under our regularity assumptions).
We consider the algorithm of \cite{lian2019decentralizedmomemtum} (decentralized SGD with momemtum, state of the art decentralized gossip-based algorithm for this problem) with fixed communication matrix $W$ on the graph $G$  %randomized gossip matrices \cite{boyd2006gossip} with edge probabilities of activation $(p_\edgevw)_{\edgevw\in\cE}$,
together with the Walkman algorithm \cite{walkman} and our algorithms, for a Markov chain with transition matrix $P$.
For the sake of comparison, we take as gossip matrix $W=P$.%, and $p_\edgevw=(P_{vw}+P_{wv})/(2n)$ for randomized gossip activation weights. 
Consequently as shown in Table~\ref{tb:rates}, \emph{our algorithm (MC-SAG) always outperforms non-accelerated gossip-based decentralized gradient descent algorithms in terms of number of communications required to reach $\eps$-stationary points}.
Note that we do not claim the ‘‘overall superiority'' of our approach over classical decentralized optimization algorithms (the latter benefit from parallelization while ours do not), but a superiority in terms of communication efficiency.  

\begin{table}[h]
	\caption{
        Number of communications required (\# comm. below) to obtain an $\eps$-stationary point $\xx$ (verifying $\NRM{\nabla f(\xx)}^2\leq \eps$). % using decentralized SGD as in \citet{pmlr-v119-koloskova20a} or \citet{lian2019decentralizedmomemtum}, Walkman \cite{walkman} or our MC-SGD and MC-SAG algorithms. 
        Logarithmic/constant factors hidden.
    }\label{tb:rates}
	\centering
	\begin{threeparttable}
		\begin{tabular}{c c c c c}
			\toprule[.1em]
			\begin{tabular}{c} \end{tabular} & {\color{blue} A}  &  {\color{blue} B}  & Our work \\
			\midrule
			\# comm.%\begin{tabular}{c} \# communications\\ required to \\ find an \\ $\eps$-stationary point \end{tabular} 
			& $\eps^{-1}|\cE|\tau_{\rm mix}$
			%\begin{tabular}{c} $\eps^{-1}|\cE|\tau_{\rm mix}$ $^{\color{blue} \rm a}$\\ $(\eps^{-1}n + \eps^{-\frac{3}{2}})\tau_{\rm mix}$ $^{ \color{blue}\rm b}$\end{tabular}  
			& $\eps^{-1}n\tau_{\rm mix}^2$ & \begin{tabular}{c} $ \eps^{-2}\tau_\mix$  $^{\color{blue} c}$\\ $\eps^{-1}\tau_{\rm hit}$ $^{\color{blue} d}$\end{tabular} & \\ \cmidrule{1-1}
		\end{tabular} % default value: 6pt
	\begin{tablenotes}
		{\scriptsize % $^{\color{blue} a}$  constant gossip matrices \cite{lian2019decentralizedmomemtum}. 
		\item {\color{blue} A}: \cite{lian2019decentralizedmomemtum}.
		%$^{\color{blue} b}$ randomized gossip matrices \cite{pmlr-v119-koloskova20a}, under Assumption \ref{hyp:dissimilarities}. 
		\item {\color{blue} B}: \cite{walkman}.
		\item $^{\color{blue} c}$ MC-SGD under Assumption \ref{hyp:dissimilarities}. 
		\item $^{\color{blue} d}$ MC-SAG.
	    }
	\end{tablenotes}  		
	\end{threeparttable}
\end{table} 
\begin{table}[h]
	\caption{
        Hitting and mixing times of some known graphs, for the simple random walk. 
    }\label{tb:times}
	\centering
	\begin{threeparttable}
		\begin{tabular}{c  c c c}
			\toprule[.1em]
			\begin{tabular}{c} \end{tabular} & Cycle  & $d$-dim. torus & Complete graph \\
			\midrule
			\begin{tabular}{c} $\tau_\hit$ \end{tabular} &  $\cO(n^2)$  &  $\cO(n^{1+\frac{1}{d}})$ & $\cO(n)$\\
			\midrule
			\begin{tabular}{c} $n\tau_\mix$ \end{tabular} &  $\cO(n^3)$   &$\cO(n^{1+\frac{2}{d}})$& $\cO(n)$ \\ \cmidrule{1-1}
		\end{tabular} % default value: 6pt  		
	\end{threeparttable}
\end{table} 

The dependency on the quantity $\tau_\hit$ we obtain (under no other assumptions than smoothness) is always better than the dependency on $n\tau_\mix$ of previous works (using gossip communications or a random walker), since $\tau_\hit\leq 2n\tau_\mix$ always holds.
As illustrated in Table~\ref{tb:times} on some known graphs, this inequality is rather loose when the connectivity decreases (\emph{i.e.}~the mixing time increases), so that the speedup our results lead to is even more effective on ill-connected graphs; the difference between the two can scale up to a factor $n$.
In fact, we prove in Appendix~\ref{app:basic} that for $d$-regular and symetric graphs, we have:
\begin{equation*}
    \tau_\hit\leq \frac{2|\cE|{\rm Diam}(G)}{d}\,,
\end{equation*}
where ${\rm Diam}$ is the diameter of $G$. The dependency $n\tau_\mix^2$ obtained in \cite{walkman} (the only work that does not make bounded gradient assumptions) is prohibitive when graph connectivity decreases ($n^3$ on the grid, $n^5$ on the cycle).
Our analysis does not rely on a reversibility assumption of the Markov chain, so that non symetric random walks can be used, therefore accelerating mixing; on the cycle for a non-symmetric random walk for instance, the hitting time decreases to $\cO(n)$.

\subsection{Using all gradient along the trajectory of $(v_t)$ is provably more efficient }

\citet{tao2018mcsgd} empirically motivated through empirical evidence the use of all gradients $\nabla f_{v_t}$ sampled along the trajectory of the Markov chain rather than waiting for the chain to mix before every stochastic gradient step in order to mimic the behavior of vanilla SGD.
%All previous analyses obtain rates of the form $\sqrt{\tau_\mix/T}$ under bounded gradient assumptions \cite{johansson2010randomizedincremental,duchi2012ergodic,tao2018mcsgd} (we obtain same results, but with the milder Assumption~\ref{hyp:dissimilarities}), or of the form $n\tau_\mix^2$ \cite{walkman}.
%However, since vanilla SGD under the same assumptions yields rates of the form $1/\sqrt{T}$ waiting $\tau_\mix$  for approximate \emph{i.i.d.}~samples 
However, their rates (as well as those of \cite{johansson2010randomizedincremental,duchi2012ergodic} and ours for MC-SGD) are functions of $S=T/\tau_\mix$, and of order $1/S+1/\sqrt{S}$.
These are exactly what one would obtain, by waiting for $\tau_\mix$ steps of the chain in order to have an approximate uniform sampling before each update!
Consequently, there are no theoretical ground or evidence for using all the gradients along the trajectory of the Markov chain with these results, other than by doing so, one does not do worse than waiting for the chain to mix to mimic vanilla SGD. This is exactly the approach taken by \citet{hendrikx2022tokens}: a gradient step is performed every $\tau_\mix$ random walk steps.
This is where MC-SAG and its guarantees that depend on $T/\tau_\hit$ come in place. 
Under our assumptions, the rate of SAG for finding approximate stationary points when waiting for the chain to mix before using a stochastic gradient is of order $n/S=n\tau_\mix/T$ where $S=T/\tau_\mix$ is the number of stochastic gradients used.
We obtain $\tau_\hit/T$ instead: hence, in cases where $\tau_\hit=o(n\tau_\mix)$, using all stochastic gradients along the trajectory of the Markov chain - instead of waiting for mixing before performing a stochastic gradient step - provably helps.
Hence, we here provided a realistic scenario where using all stochastic gradients proves to accelerate the rate; this was previously noticed in another setting with RER-SGD (SGD with reverse-experience replay, \cite{kowshik2021streaming}).

\subsection{Running-time complexity and robustness to ‘‘stragglers''}

%We proved the iteration-complexity efficiency of our random walk-based algorithm MC-SAG in Table~\ref{tb:rates}, illustrated in Figure~\ref{fig:exp}. 
The total time it takes to run random walk-based decentralized algorithms depends on $T_\vtow$, the time it takes to compute a gradient at $v$, and then the communication time to send it to $w$.
Using ergodicity of the Markov chain, the time ${\rm time}^{\rm MC}(T)$ it takes to run MC-SAG or MC-SGD for $T$ iterations verifies:
\begin{equation*}
    \frac{{\rm time}^{\rm MC}(T)}{T}\to\sum_{(v,w)\in\cV^2}\pi_v P_{v,w}T_\vtow\,,
\end{equation*}
where the limit is a weighted sum of the local computation/communication times, with weights summing to 1. Random-walk based decentralized algorithms are therefore robust to slow edges or nodes (‘‘stragglers''), a property that synchronous gossip algorithms do not verify (their time complexity depends on $\max_{v,w}T_{\vtow})$, while studying asynchronous gossip is notoriously difficult \citep{even2021delays}.

\paragraph{Numerical illustration of our theory}

We present in Appendix~\ref{sec:exp} two experiments on synthetic problems, comparing MC-SAG and MC-SGD to gossip-based and token baselines. We consider two settings (a well-connected graph with homogeneous functions,  an ill-connected graph with heterogeneous functions) in an effort to illustrate how these two difficulties (graph connectivity and data-heterogeneity) are both bypassed by the MC-SAG algorithm.
%\section{Conclusion}

%Studying stochastic gradient descent under Markov sampling schemes, we unveiled a dependency on $\tau_\hit$ the hitting time of the Markov chain in our black-box lower bound, as opposed to previous analyses that only used the mixing time of the Markov chain. From this observation, we derived MC-SAG, a variance-reduced version of MC-SGD that requires no other regularity assumptions than smoothness, and is to the best of our knowledge the only algorithm that benefits from a dependency on $\tau_\hit$ only, leading to increased communication-efficiency, through a better use of the stochastic gradients sampled along the Markov chain trajectory. We also refined previous analyses of MC-SGD, and removed the bounded gradient assumptions, by mimicking the proof of delayed SGD algorithms.

\paragraph{Conclusion}
Without variance reduction and under bounded data-heterogeneity assumptions, SGD under MC sampling is slowed down by a factor $\tau_\mix$, due to increased sampling variance. Using variance-reduction techniques, we obtain faster rates, that depend on $\tau_\hit$ rather than $n\tau_\mix$, which one would have expected by directly extending known results in the \emph{i.i.d.}~setting to our MC sampling schemes. Leveraging such a dependency yields a fast token algorithm (MC-SAG), robust to both ill-connectivity of the graph and data-heterogeneity.

\paragraph{Aknowledgments}
I deeply thank Hadrien Hendrikx for many interesting discussions on the subject and for all his help in writing this paper, as well as Anastasiia Koloskova and Edwige Cyffers for helpful discussions.

%\nocite{*}
\bibliographystyle{plainnat} %plainnat après mais à changer !!!!
\bibliography{refs}

\newpage

\appendix

\onecolumn

\section{Preliminary results}\label{app:basic}

\subsection{Mixing time and relaxation time, mixing time and hitting time}

We first begin by the two following lemmas. The first one is very classical, and bounds the mixing time in terms of $1/\lambda_P$, in the case where the chain is reversible; we provide a proof for completeness. Note that if the chain is reversible, \emph{we still have a linear decay} \cite{LevinPeresWilmer2006}. 
The second lemma we provide bounds the hitting time of the Markov chain with the mixing time.
This result is somewhat less classical, and is not present in the classical Markov chain literature surveys.

\begin{lemma}[$\tau_\mix$ and $\lambda_P$]
    For any $\eps>0$, if the chain is reversible:
    \begin{equation*}
        \tau_\mix(\eps)\leq\left\lceil \frac{1}{\lambda_P}\ln(\eps^{-1}\pi_{\min}^{-1})\right\rceil\,,
    \end{equation*}
    so that $\tau_\mix\leq \left\lceil \frac{1}{\lambda_P}\ln(\pi_{\min}^{-2}/2)\right\rceil$.
\end{lemma}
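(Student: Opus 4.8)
The plan is to turn the spectral contraction in the $\NRM{\cdot}_\pi$ norm stated just above into a total-variation bound, and then invert it for $t$. The key is to read $\NRM{\mu-\pi}_\pi$ as the $L^2(\pi)$ norm of the centered density $v\mapsto (\mu)_v/\pi_v-1$, i.e.\ a $\chi^2$-type quantity, which is exactly the object on which $P$ acts self-adjointly under reversibility.

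First I would record the elementary comparison between total variation and this norm. Setting $g_v=((\mu)_v-\pi_v)/\pi_v$, Cauchy--Schwarz with weights $\pi_v$ gives
\[ 2\dd_{\rm TV}(\mu,\pi)=\sum_{v\in\cV}|(\mu)_v-\pi_v|=\sum_{v\in\cV}\pi_v|g_v|\leq\Big(\sum_{v}\pi_v\Big)^{1/2}\Big(\sum_v\pi_v g_v^2\Big)^{1/2}=\NRM{\mu-\pi}_\pi, \]
so $\dd_{\rm TV}(\mu,\pi)\leq\tfrac12\NRM{\mu-\pi}_\pi$. Next I would bound the worst-case starting distance. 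The map $\pi_0\mapsto\NRM{\pi_0-\pi}_\pi^2=\sum_v((\pi_0)_v-\pi_v)^2/\pi_v$ is convex, so its maximum over the probability simplex is reached at a vertex $\delta_w$; a one-line computation gives $\NRM{\delta_w-\pi}_\pi^2=(1-\pi_w)/\pi_w\leq\pi_{\min}^{-1}$, hence $\sup_{\pi_0}\NRM{\pi_0-\pi}_\pi\leq\pi_{\min}^{-1/2}$.

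Chaining these two facts with the contraction $\NRM{\pi_0P^t-\pi}_\pi\le(1-\lambda_P)^t\NRM{\pi_0-\pi}_\pi$ and $(1-\lambda_P)^t\le e^{-\lambda_Pt}$ yields, uniformly in $\pi_0$,
\[ \dd_{\rm TV}(\pi_0P^t,\pi)\leq\tfrac12(1-\lambda_P)^t\NRM{\pi_0-\pi}_\pi\leq\tfrac12\,\pi_{\min}^{-1/2}\,e^{-\lambda_Pt}. \]
Forcing the right-hand side below $\eps$ gives $t\geq\lambda_P^{-1}\ln\big(\pi_{\min}^{-1/2}/(2\eps)\big)$; since $\pi_{\min}^{-1/2}/2\leq\pi_{\min}^{-1}$ (as $\pi_{\min}\le1$), the simpler requirement $t\geq\lambda_P^{-1}\ln(\eps^{-1}\pi_{\min}^{-1})$ is sufficient, and taking the least integer $t$ gives the ceiling, proving the first bound. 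The second bound follows by specializing $\eps=\pi_{\min}/2$.

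The main (and essentially only) obstacle is conceptual rather than computational: one must use the density/$\chi^2$ normalization of $\NRM{\cdot}_\pi$ so that both the Cauchy--Schwarz comparison and the invoked spectral contraction apply; with the literal measure norm $\sum_v\pi_v z_v^2$ neither step is valid. Once this normalization is fixed, the remaining inequalities are routine and the stated constants are recovered by direct substitution.
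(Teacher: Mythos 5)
Your proof is correct, and its skeleton is the same as the paper's: compare total variation to a weighted-$L^2$ quantity, apply the spectral contraction, bound the worst-case initial distance, and invert in $t$. The genuine difference is the normalization. The paper works with the literal measure norm of its footnote, $\NRM{z}_\pi^2=\sum_v\pi_v z_v^2$ applied to $z=P^t\pi_0-\pi$, comparing $\dd_{\rm TV}\leq\frac{1}{2\pi_{\min}}\NRM{\cdot}_\pi$ and bounding $\NRM{\pi_0-\pi}_\pi\leq 2$, which yields $\dd_{\rm TV}(P^t\pi_0,\pi)\leq\pi_{\min}^{-1}(1-\lambda_P)^t$; you instead work with the $\chi^2$/density normalization $g_v=(\mu_v-\pi_v)/\pi_v$ throughout, getting the cleaner chain $\dd_{\rm TV}\leq\frac{1}{2}\NRM{g}_{L^2(\pi)}\leq\frac{1}{2}\pi_{\min}^{-1/2}e^{-\lambda_P t}$, with the nice convexity argument that the supremum of the initial distance over the simplex is attained at a Dirac, where it equals $(1-\pi_w)/\pi_w\leq\pi_{\min}^{-1}$. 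Your normalization is the right one in a precise sense: for reversible chains, left multiplication by $P$ on signed measures is self-adjoint with respect to the $1/\pi$-weighted inner product (equivalently, densities evolve by $P$ in $L^2(\pi)$), so the contraction with factor $(1-\lambda_P)^t$ and constant $1$ is only guaranteed in the $\chi^2$ norm — read literally against the footnote, the paper's displayed contraction is an abuse of notation that your version repairs, and you also gain a factor ($\pi_{\min}^{-1/2}$ versus $\pi_{\min}^{-1}$). One side remark of yours is overstated, though: you claim that with the literal norm \emph{neither} step is valid, but the Cauchy--Schwarz comparison survives perfectly well there at the cost of the $\frac{1}{2\pi_{\min}}$ factor — that is exactly the paper's second line; only the contraction step truly requires the density normalization. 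Finally, specializing $\eps=\pi_{\min}/2$ in your relaxed bound gives $\lceil\lambda_P^{-1}\ln(2\pi_{\min}^{-2})\rceil$ rather than the stated $\lceil\lambda_P^{-1}\ln(\pi_{\min}^{-2}/2)\rceil$, an additive $\lambda_P^{-1}\ln 4$ slack; this is immaterial (your sharper pre-relaxation bound gives $\lceil\tfrac{3}{2}\lambda_P^{-1}\ln\pi_{\min}^{-1}\rceil$, better than the stated constant once $\pi_{\min}\leq 1/4$, and the paper's own substitution is itself off by $\ln 2$ from its proof).
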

\begin{proof}
    We have:
    \begin{align*}
        \dd_{\rm TV}(P^t\pi_0,\pi)&=\frac{1}{2}\sum_{w\in\cV}|(P^t\pi_0)_{w}-\pi_w|\\
        &\leq\frac{1}{2\pi_{\min}}\sum_{w\in\cV}\pi_w|(P^t\pi_0)_{w}-\pi_w|\\
        &\leq \frac{1}{2\pi_{\min}}\sqrt{\NRM{P^t\pi_0 -\pi}_\pi^2}\\
        &\leq \frac{(1-\lambda_P)^{t}}{2\pi_{\min}}\NRM{\pi_0-\pi}_\pi\\
        &\leq \frac{(1-\lambda_P)^{t}}{\pi_{\min}}\,,
    \end{align*}
    so that $|(P^t)_{v,w}-\pi_w|\leq \eps$ for $t\geq \lambda_P^{-1}\ln(\pi_{\min}^{-1}\eps^{-1}/2)$.
\end{proof}

\begin{lemma}[Mixing times and hitting times]
    \begin{equation*}
        \tau_\hit\leq 2 \pi_{\min}^{-1}\tau_\mix\,,
    \end{equation*}
    so that if $\pi$ is the uniform distribution over $\cV$, $\tau_\hit\leq 2n\tau_\mix$.
\end{lemma}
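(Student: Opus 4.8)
The plan is to bound, for an arbitrary source $v$ and target $w$, the expected hitting time $\esp{\tau_w\,|\,v_0=v}$ by a geometric renewal argument over consecutive blocks of $\tau_\mix$ steps, and then take the maximum over $(v,w)$.

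First I would convert the mixing-time guarantee into a per-state lower bound. By definition $\tau_\mix=\tau_\mix(\pi_{\min}/2)$, so for any initial distribution $\pi_0$ the iterate $P^{\tau_\mix}\pi_0$ satisfies $\dd_{\rm TV}(P^{\tau_\mix}\pi_0,\pi)\leq \pi_{\min}/2$. Since the total-variation distance dominates the discrepancy on any single state (taking the set $A=\set{w}$ in $\dd_{\rm TV}=\max_A|\mu(A)-\nu(A)|$), this yields, for every target $w\in\cV$ and every starting state,
\[
    (P^{\tau_\mix}\pi_0)_w \;\geq\; \pi_w-\tfrac{\pi_{\min}}{2}\;\geq\;\tfrac{\pi_{\min}}{2}\,,
\]
using $\pi_w\geq\pi_{\min}$. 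In words: from \emph{any} state, the chain sits at the fixed target $w$ after exactly $\tau_\mix$ further steps with probability at least $p\eqdef\pi_{\min}/2$.

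Next I would partition time into blocks of length $\tau_\mix$ and record whether the chain is at $w$ at a block boundary. Let $B_k=\set{v_{k\tau_\mix}=w}$. The uniform lower bound above together with the Markov property gives $\proba{B_k^c\,|\,v_{(k-1)\tau_\mix}=u}\leq 1-p$ irrespective of $u$, so iterating the conditioning produces $\proba{B_1^c,\ldots,B_k^c}\leq (1-p)^k$. The point that makes this clean is precisely that $p$ is uniform over the state at the start of each block, so the exact position at block boundaries never needs to be tracked. Letting $N=\inf\set{k\geq1\,:\,B_k\text{ holds}}$ be the first block ending at $w$, we have $\tau_w\leq N\tau_\mix$ and $\proba{N>k}\leq(1-p)^k$, whence $\esp{N}\leq\sum_{k\geq0}(1-p)^k=1/p=2\pi_{\min}^{-1}$.

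Combining the last two displays gives $\esp{\tau_w\,|\,v_0=v}\leq \tau_\mix\esp{N}\leq 2\pi_{\min}^{-1}\tau_\mix$, and maximizing over $v$ and $w$ yields $\tau_\hit\leq 2\pi_{\min}^{-1}\tau_\mix$; specializing to the uniform stationary distribution ($\pi_{\min}=1/n$) gives $\tau_\hit\leq 2n\tau_\mix$. There is no serious obstacle here: the argument is a standard block/renewal domination, and the only point requiring care is the uniform-over-starting-state application of the Markov property in the second step, which also furnishes the finiteness of $\esp{\tau_w}$ for free.
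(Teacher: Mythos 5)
Your proof is correct and is essentially the paper's argument: both partition time into blocks of length $\tau_\mix$, use the definition $\tau_\mix=\tau_\mix(\pi_{\min}/2)$ to get a uniform-in-starting-state probability of at least $\pi_{\min}/2$ of sitting at $w$ at each block boundary, and conclude via geometric decay of $\proba{\tau_w>\ell\tau_\mix}$. The only cosmetic differences are that the paper keeps the per-target constant $\pi_w/2$ (yielding the slightly sharper intermediate bound $\esp{\tau_w|v_0=v}\leq 2\tau_\mix/\pi_w$ before maximizing over $w$) and bounds $\esp{\tau_w}$ by the tail sum grouped into blocks rather than via your stopping index $N$ with $\tau_w\leq N\tau_\mix$.
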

\begin{proof}
    for any  $v,w\in\cV$,
    \begin{align*}
        \esp{\tau_w|v_0=v}&=\sum_{k\geq 1}\proba{\tau_w\geq k|v_0=v}\\
        &\leq \sum_{\ell\geq 0}\proba{\tau_w > \ell\tau_\mix|v_0=v}\,.
    \end{align*}
    Then, for $\ell\geq 0$, 
    \begin{align*}
        \proba{\tau_w > (\ell+1)\tau_\mix|v_0=v}=\proba{\tau_w > (\ell+1)\tau_\mix|\tau_w>\ell\tau_\mix, v_0=v}\proba{\tau_w>\ell\tau_\mix| v_0=v}\,,
    \end{align*}
    and, conditioning on $v_{\ell\tau_\mix}$, $\proba{\tau_w > (\ell+1)\tau_\mix|\tau_w>\ell\tau_\mix, v_{\ell\tau_\mix}}\leq \proba{v_{(\ell+1)\tau\mix}\ne w|v_{\ell\tau_\mix}}$. 
    By definition of $\tau_\mix$, we have that $\proba{v_{(\ell+1)\tau\mix}\ne w|v_{\ell\tau_\mix}}\leq (1-\pi_w/2)$, so that:
    \begin{equation*}
        \proba{\tau_w > (\ell+1)\tau_\mix|v_0=v}\leq (1-\pi_w/2)\proba{\tau_w > \ell\tau_\mix|v_0=v} \,,
    \end{equation*}
    and $\proba{\tau_w > \ell\tau_\mix|v_0=v}\leq (1-\pi_w/2)^\ell$ by recursion. Finally,
    \begin{align*}
        \esp{\tau_w|v_0=v}&\leq \tau_\mix\sum_{\ell\geq 0}(1-\pi_w/2)^\ell\\
        &\leq \frac{2\tau_\mix}{\pi_w}\,,
    \end{align*}
    concluding the proof by taking the maximum over $w$.
\end{proof}

\subsection{Matthews' bound for cover times}

The following result bounds the cover time of the Markov chain: it is in fact closely related to its hitting time, and the two differ with a most a factor $\ln(n)$.
This surprising result is proved in a very elegant way in the survey \citet{LevinPeresWilmer2006}, using the famous Matthews' method \cite{matthews-covering}.

\begin{theorem}[Matthews' bound for cover times]
    The hitting and cover times of the Markov chain verify:
    \begin{equation*}
        \tau_\cov\leq \left(\sum_{k=1}^{n-1} \frac{1}{k}\right)\tau_\hit\,.
    \end{equation*}
\end{theorem}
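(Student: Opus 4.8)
The plan is to use Matthews' method, a randomized-ordering argument that reduces the cover time to a sum of hitting times. Fix a starting state $v\in\cV$; I will bound $\esp{\max_{w\in\cV}\tau_w\,|\,v_0=v}$ and then take the maximum over $v$. First I would draw a permutation $\sigma=(\sigma_1,\dots,\sigma_n)$ of $\cV$ uniformly at random, independently of the Markov chain, and define the partial cover times $C_k=\max_{1\le j\le k}(\text{first time the walk visits }\sigma_j)$, with the starting state counted as visited at time $0$. Then $C_0=0$, the full cover time from $v$ equals $C_n$, and the telescoping identity $C_n=\sum_{k=1}^n (C_k-C_{k-1})$ reduces the problem to controlling each increment $\esp{C_k-C_{k-1}}$.

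The combinatorial heart of the argument is that $C_k-C_{k-1}>0$ exactly on the event $A_k$ that $\sigma_k$ is the \emph{last} of $\sigma_1,\dots,\sigma_k$ to be visited by the walk. Since the order in which the walk first visits distinct states is itself a permutation of $\cV$ (starting at $v$), and $\sigma$ is an independent uniform permutation, an exchangeability argument gives $\proba{A_k}=1/k$. On $A_k$ the state $\sigma_k$ has not yet been visited at time $C_{k-1}$, so $C_k-C_{k-1}$ is precisely the time the walk takes, from its position $v_{C_{k-1}}$, to hit $\sigma_k$; by the strong Markov property its conditional expectation is a hitting time, bounded by $\tau_\hit$. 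Writing $H_m=\sum_{k=1}^m 1/k$, this yields $\esp{C_k-C_{k-1}}\le \proba{A_k}\,\tau_\hit=\tau_\hit/k$ for $k\ge 2$, while for $k=1$ the increment is the first-visit time of a uniformly random target, which is $0$ with probability $1/n$ (when $\sigma_1=v$), giving $\esp{C_1}\le \tfrac{n-1}{n}\tau_\hit$. Summing gives $\esp{C_n}\le \tfrac{n-1}{n}\tau_\hit+\sum_{k=2}^n \tau_\hit/k=\tau_\hit\big(H_n-\tfrac1n\big)=\tau_\hit H_{n-1}$, and maximizing over $v$ delivers the claim. The saving of $\tfrac1n\tau_\hit$ in the $k=1$ term is exactly what turns the naive bound $H_n$ into the sharper $H_{n-1}$.

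The step I expect to be delicate is the bound $\esp{C_k-C_{k-1}}\le\tau_\hit/k$, because at first sight conditioning on $A_k$ (that $\sigma_k$ is visited \emph{last}) could bias the relevant hitting time upward. The resolution is to observe that $A_k$ is in fact determined by the cover process at time $C_{k-1}$: it is equivalent to the event that at the instant $\sigma_1,\dots,\sigma_{k-1}$ have just all been visited, the state $\sigma_k$ has not yet been visited. Hence $A_k$ is measurable with respect to the stopping time $C_{k-1}$, and applying the strong Markov property at $C_{k-1}$ lets me factor $\esp{(C_k-C_{k-1})\mathbf{1}_{A_k}}=\esp{\mathbf{1}_{A_k}\,\esp{\tau_{\sigma_k}\mid v_0=v_{C_{k-1}}}}\le\tau_\hit\,\proba{A_k}$, with no adverse conditioning. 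The only other point needing care is the symmetry identity $\proba{A_k}=1/k$, which rests on $\sigma$ being independent of the walk together with the fact that distinct states have almost surely distinct first-visit times.
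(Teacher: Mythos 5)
Your proof is correct and is precisely the argument the paper itself defers to: the paper states this theorem without proof, citing Matthews' method as presented in \citet{LevinPeresWilmer2006} (Chapter 11), and your randomized-ordering argument --- an independent uniform permutation $\sigma$, partial cover times $C_k$, the exchangeability identity $\proba{A_k}=1/k$, and the strong Markov property applied at the stopping time $C_{k-1}$ (justified, as you note, by $A_k\in\cF_{C_{k-1}}$) --- is exactly that method, including the $k=1$ refinement that sharpens the constant from $\sum_{k=1}^{n}1/k$ to $\sum_{k=1}^{n-1}1/k$. One small caveat worth recording: the $H_{n-1}$ constant requires your convention that the starting state counts as visited at time $0$, which is the standard one and surely the intended reading, whereas the paper's Definition~\ref{def:times} literally sets $\tau_w=\inf\set{t\geq 1\,|\,v_t=w}$, under which covering would also require returning to $v_0$ and the same argument would only yield the constant $\sum_{k=1}^{n}1/k$.
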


\subsection{A bound on the hitting time of regular and symetric graphs}

Using results from \citet{hittings}, we relate the hitting time of symmetric regular graphs (in a sense that we define below) to well-known graph-related quantities: number of edges $|\cE|$, diameter $\delta$ and degree $d$.

\begin{lemma}[Bounding hitting times of regular graphs] Let $(v_t)$ be the simple random walk on a $d$-regular graph $G$ of diameter $\delta$, that satisfies the following symetry property: for any $\edgeuv,\edgevw\in\cE$, there exists a graph automorphism that maps $v$ to $w$.
Then, we have:
\begin{equation*}
    \tau_\hit\leq \frac{2|\cE|\delta}{d}
\end{equation*}
\end{lemma}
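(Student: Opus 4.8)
The plan is to prove the bound $\tau_\hit \leq 2|\cE|\delta/d$ by combining two ingredients: a general identity (or bound) relating the hitting time between two \emph{adjacent} vertices to the number of edges, and the symmetry hypothesis which lets me propagate an edge-wise bound along a shortest path of length at most $\delta$. The key structural fact about the simple random walk on a $d$-regular graph is that its stationary distribution is uniform, $\pi_v = 1/n$, and each edge carries stationary flow $\pi_u P_{u,v} = 1/(nd)$. A classical result (which I would cite from \citet{hittings}) states that for a random walk with symmetric edge structure, the \emph{commute time} $C_{u,v} = \esp{\tau_v \mid v_0=u} + \esp{\tau_u \mid v_0=v}$ between two vertices satisfies $C_{u,v} = 2|\cE| R_{u,v}$, where $R_{u,v}$ is the effective resistance between $u$ and $v$ in the graph viewed as an electrical network with unit resistances. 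This commute-time/resistance identity is the workhorse.

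First I would reduce the all-pairs hitting time to a statement about adjacent vertices. For any $u, w \in \cV$, pick a shortest path $u = x_0 \sim x_1 \sim \cdots \sim x_k = w$ with $k \leq \delta$. Effective resistance is a metric (it satisfies the triangle inequality along series connections), so $R_{u,w} \leq \sum_{i=0}^{k-1} R_{x_i, x_{i+1}} \leq k \cdot \max_{\edgeuv \in \cE} R_{u,v} \leq \delta \cdot \max_{\edgeuv} R_{u,v}$. For an edge $\edgeuv$, the direct connection gives $R_{u,v} \leq 1$, but more importantly the commute identity gives $\esp{\tau_v \mid v_0 = u} \leq C_{u,v} = 2|\cE| R_{u,v} \leq 2|\cE|$. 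I then want the per-edge \emph{directed} hitting time, and this is where the symmetry hypothesis enters crucially.

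The symmetry assumption — that for any two adjacent edges $\edgeuv, \edgevw$ there is an automorphism sending $v$ to $w$ — is designed to force the two directed hitting times across an edge to be equal, i.e. $\esp{\tau_v \mid v_0 = u} = \esp{\tau_u \mid v_0 = v}$ for every edge $\edgeuv$, so that each equals $\tfrac12 C_{u,v}$. Under an edge-transitivity-type symmetry, the directed edge-traversal hitting times are all equal to a common value, and summing the commute-time identity over the natural edge structure pins this value down. Concretely, I expect that averaging $\esp{\tau_w \mid v_0 = v}$ along a shortest path, using that effective resistance telescopes, yields
\begin{equation*}
    \esp{\tau_w \mid v_0 = v} \leq \delta \cdot \max_{\edgeuv} \esp{\tau_v \mid v_0=u} \leq \delta \cdot \frac{2|\cE|}{d}\,,
\end{equation*}
where the factor $1/d$ comes from the $d$-regularity: the per-edge directed hitting time is not $2|\cE|$ but $2|\cE|/d$ once the symmetry makes each of the $d$ edges at a vertex contribute equally, i.e. the resistance per edge seen by the walk scales like $1/d$. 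Taking the maximum over $v,w$ gives $\tau_\hit \leq 2|\cE|\delta/d$.

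The main obstacle is making the symmetry argument rigorous: the hypothesis gives automorphisms between adjacent \emph{edges}, and I must leverage this to show that the directed hitting time across an edge is a graph-invariant constant (independent of the edge) and to correctly extract the $1/d$ factor rather than a crude $2|\cE|$ per edge. The clean route is to establish, via the automorphism-invariance, that $\esp{\tau_v \mid v_0=u}$ depends only on the orbit of the edge $\edgeuv$ — and under the stated symmetry all directed edges lie in a single orbit — then to combine the commute-time identity $C_{u,v} = 2|\cE| R_{u,v}$ with the fact that for a $d$-regular edge-symmetric graph the relevant effective resistance contribution per edge along a geodesic is bounded by $1/d$. I would lean on \citet{hittings} for the precise commute-resistance machinery and for the edge-symmetry reduction, and verify the degree normalization carefully, since that is exactly where the factor $d$ in the denominator is won.
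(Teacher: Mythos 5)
Your two-step skeleton --- a per-edge hitting-time bound of $2|\cE|/d$, then chaining along a geodesic of length at most $\delta$ --- is exactly the structure of the paper's proof, and your chaining step is sound (the paper chains hitting times directly via the strong Markov property, $\esp{\tau_w|v_0=v}\leq \sum_{s<\delta'}\esp{\tau_{v(s+1)}|v_0=v(s)}$, while you chain effective resistances and then use $\esp{\tau_w|v_0=u}\leq C_{u,w}$; the two are interchangeable). The genuine gap is in the per-edge step, which you yourself flag as ``the main obstacle'': you never actually extract the factor $1/d$. Your one concrete estimate, $R_{u,v}\leq 1$ for an edge, only yields a per-edge commute time of $2|\cE|$, a factor $d$ too large, and the two claims meant to repair this are unsubstantiated: \emph{(i)} the assertion that ``under the stated symmetry all directed edges lie in a single orbit'' does not follow from the hypothesis as literally phrased --- an automorphism sending $v$ to $w$ for each adjacent pair only gives vertex-transitivity, which is strictly weaker than (directed) edge-transitivity; \emph{(ii)} the statement that ``the resistance per edge seen by the walk scales like $1/d$'' is asserted, not proven. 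Note that the paper does not prove this step either: it simply cites Theorem~2.1 of \citet{hittings}, whose hypothesis is precisely the symmetry property of the lemma, to get the exact per-edge value $\esp{\tau_w|v_0=v}=2|\cE|/d$, and then chains. So if you invoke that theorem, your argument collapses to the paper's; as a self-contained derivation it is incomplete exactly where the lemma is nontrivial.

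If you want to close the gap without the citation, the cleanest elementary route bypasses resistances entirely: on a $d$-regular graph the stationary distribution is uniform, so the return time satisfies $\esp{\tau_v|v_0=v}=1/\pi_v=2|\cE|/d$, and conditioning on the first step gives $\esp{\tau_v|v_0=v}=1+\frac{1}{d}\sum_{w\sim v}\esp{\tau_v|v_0=w}$; once the symmetry (read, as in the cited theorem, as mapping the directed edge $(u,v)$ to $(v,w)$, hence putting all directed edges in one orbit by connectivity) equalizes the summands, each directed per-edge hitting time equals $2|\cE|/d-1\leq 2|\cE|/d$. Alternatively, your resistance route can be completed with Foster's theorem, $\sum_{\edgeuv\in\cE}R_{u,v}=n-1$: edge-transitivity equalizes edge resistances, so $R_{u,v}=(n-1)/|\cE|\leq 2/d$, hence $C_{u,v}=2(n-1)$, and with the two directions equal by symmetry, $\esp{\tau_v|v_0=u}=n-1\leq 2|\cE|/d$. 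Either way, the $1/d$ comes from a \emph{global} identity (return time, or Foster's sum rule), not from the local geometry of a single edge --- which is why your purely local estimate $R_{u,v}\leq 1$ could not produce it.
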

\begin{proof}
    Using Theorem~2.1 of \citet{hittings}, for $\edgevw\in\cE$, we have 
    \begin{equation*}
        \esp{\tau_w|v_0=v}= \frac{2|\cE|}{d}\,,
    \end{equation*}
    where $|\cE|$ is the number of edges in the graph.
    Let $v$ and $w$ in $\cV$, at distance $\delta'\leq\delta$.
    There exists nodes $v=v(0),v(1)\ldots,v(\delta'-1),v(\delta')=w$ such that for all $0\leq s<\delta'$, $\set{v(s),v(s+1)}\in\cE$, and by using the Markov property:
    \begin{equation*}
        \esp{\tau_w|v_0=v}\leq\sum_{s<\delta'}\esp{\tau_{v(s+1)}v_0=v(s)}\leq \delta \frac{2|\cE|}{d}\,.
    \end{equation*}
\end{proof}

\subsection{Two miscellaneous lemmas}

We finally end this ‘‘preliminary results'' section with the two following lemmas, that we help us conclude the proof of Theorem~\ref{thm:mcSAG_varying}. 
The first lemma will lead to a bound on $\Big(\sum_{t<T}\gamma_t\Big)^{-1}$ where $\gamma_t$ is the adaptive stepsize policy defined in Equation~\eqref{eq:stepsizes}, while the second one is used to conclude the proof of Theorem~\ref{thm:mcSAG_varying} to show that a remaining term is non-positive.

\begin{lemma}\label{lem:stepsizes_bound}
    For $t\geq 0$ and $v\in\cV$, let $p_v(t)=\inf\set{s>t|v_s=v}$  and $d_v(t)=\sup\set{s<t|v_s=v}$ be the next and the last previous iterates for which $v_t=v$ ($d_v(t)=0$ by convention, if $v$ has not yet been visited).
    Assume that $(v_t)$ has stationary distribution $\pi$.
    For $t\geq0$, let $A_t=\sup_{v\in\cV} \big(t-d_v(t)\big)$ and $B_t=\sup_{v\in\cV} \big(p_v(t)-t\big)$. We have:
    \begin{equation*}
        \esp{B_t|v_t=v}\leq \tau_\cov\,,\quad \forall v\in\cV\,,
    \end{equation*}
    and for $T\geq1$:
    \begin{equation*}
        \sum_{t<T}\esp{A_t}\leq T\tau_\cov\,.
    \end{equation*}
\end{lemma}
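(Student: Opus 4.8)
The plan is to prove the two bounds by separate random-walk arguments that both exploit the cover time $\tau_\cov$, which by definition controls the expected time to visit \emph{all} states starting from any state.

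\textbf{Bound on $\esp{B_t|v_t=v}$.} First I would unfold the definition: $B_t=\sup_{w\in\cV}(p_w(t)-t)$ is the number of steps, starting from position $v_t$, needed until every state $w$ has been visited at least once after time $t$. By the Markov property, conditioning on $v_t=v$ makes the future evolution $(v_{t+s})_{s\geq0}$ a fresh copy of the chain started at $v$, independent of the choice of $t$. Therefore $\esp{B_t|v_t=v}$ equals $\esp{\max_{w\in\cV}\tau_w \mid v_0=v}$, which is exactly one of the quantities maximized in the definition of $\tau_\cov$ (Definition~\ref{def:times}). Taking this into account gives $\esp{B_t|v_t=v}\leq \tau_\cov$ for every $v$. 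This half should be essentially immediate once the time-shift/Markov-property identification is made explicit.

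\textbf{Bound on $\sum_{t<T}\esp{A_t}$.} Here $A_t=\sup_{w\in\cV}(t-d_w(t))$ is a \emph{backward}-looking quantity, so the clean forward argument does not apply directly; this is the step I expect to be the main obstacle. The natural approach is to reverse time: I would express $A_t$ as a cover-type quantity for the time-reversed walk, or alternatively bound $\sum_{t<T}\esp{A_t}$ by comparing it to $\sum_{t<T}\esp{B_t}$-type sums through a reindexing. Concretely, under stationarity the law of the trajectory segment read backwards from time $t$ coincides (up to the reversed kernel $\hat P_{v,w}=\pi_w P_{w,v}/\pi_v$) with a forward trajectory, and the "time since last visit of the least-recently-seen state" for the forward chain maps to a cover-time functional of the reversed chain. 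Since the reversed chain has the same stationary distribution and the same cover time (the cover time is a path functional whose distribution is preserved under the stationary reversal), each term satisfies $\esp{A_t}\leq\tau_\cov$. Summing over $t<T$ then yields $\sum_{t<T}\esp{A_t}\leq T\tau_\cov$.

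The delicate point is justifying that the backward quantity is stochastically dominated by a cover time: one must argue that starting the observation window at time $t$ and looking backward, reaching the least-recently-visited state is equivalent to a hitting/covering event for the reversed walk, and that the invariance of $\tau_\cov$ under reversal (or a direct domination $\esp{A_t}\leq\esp{B_{t'}}$ for a suitable shift) holds. An alternative, perhaps cleaner, route avoiding explicit reversal is to note $A_t\leq B_{s}$ for some earlier time through a coupling of "last visit before $t$" with "first visit after $s$", but I would first attempt the stationary time-reversal identification since it most transparently reuses the definition of $\tau_\cov$ already established in Definition~\ref{def:times}.
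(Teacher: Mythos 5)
Your first bound is exactly the paper's argument (Markov property plus the definition of $\tau_\cov$, noting that conditionally on $v_t=v$ the variable $B_t$ has the law of $\max_{w\in\cV}\tau_w$ given $v_0=v$), and it is correct. The gap is in your second half. Your plan needs two ingredients that are not available: \emph{(i)} stationarity at time $t$ — the lemma only assumes that $\pi$ \emph{exists}, and in its application to MC-SAG the chain is started from an arbitrary (deterministic) $v_0$, so the backward segment $(v_t,v_{t-1},\dots,v_0)$ is not a Markov chain with the reversed kernel $\hat P_{v,w}=\pi_w P_{w,v}/\pi_v$ started from $\pi$, and no finite $t$ fixes this; \emph{(ii)} the asserted invariance of the cover time under stationary reversal, which you use but do not prove — it is not a standard fact, and even granting stationarity you would only bound $\esp{A_t}$ by a cover-time functional of $\hat P$, which is not the quantity $\tau_\cov$ of Definition~\ref{def:times} (defined as a max over deterministic starting states, for $P$). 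Since the paper explicitly emphasizes non-reversible chains, you cannot discharge this by taking $\hat P=P$. Finally, the per-term bound $\esp{A_t}\leq\tau_\cov$ that your route aims for is never needed nor established by the paper, and is not easy: first-moment information on the $B_\ell$ alone gives only $\esp{A_t}=\cO\big(\tau_\cov(1+\ln t)\big)$ via Markov's inequality on the tails.

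The paper sidesteps all of this with a purely combinatorial, pathwise identity that requires no stationarity and no reversal: for $1\leq k\leq t$, the events $\set{A_t\geq k}$ and $\set{B_{t-k}\geq k}$ are \emph{equal}, both coinciding with ‘‘there exists $v\in\cV$ with no visit strictly between times $t-k$ and $t$'' (the convention $d_v(t)=0$ for unvisited $v$ is consistent with this). Writing $\esp{A_t}=\sum_{k=1}^{t-1}\proba{A_t\geq k}$, substituting the identity, and reindexing the double sum gives
\begin{equation*}
    \sum_{t<T}\esp{A_t}=\sum_{t<T}\sum_{k<t}\proba{B_{t-k}\geq k}=\sum_{\ell<T}\sum_{s<T-\ell}\proba{B_\ell\geq s}\leq \sum_{\ell<T}\esp{B_\ell}\leq T\tau_\cov\,,
\end{equation*}
using only the first part of the lemma. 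Summing the tails \emph{before} bounding is what avoids the per-term estimate (and the logarithmic loss). Your fallback idea of coupling $A_t$ with ‘‘$B_s$ for a suitable shift'' was the right instinct, but the correct shift is $s=t-k$ at the level of tail events, not a single pathwise domination: the natural pathwise choice $s=t-A_t$ is random, which is precisely why the tail decomposition is needed.
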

\begin{proof}
    The first bound on $B_t$ is obtained using the Markov property of the chain, and by definition of $\tau_\cov$.
    We have:
    \begin{equation*}
        \esp{A_t}=\sum_{k=0}^{t-1}\proba{A_t\geq k}\,.
    \end{equation*}
    For $t\geq0$ fixed, we denote $d=\inf_vd_v(t)$, so that $A_t=t-d$. We then have the equality between the following events:
    \begin{equation*}
        \set{A_t\geq k}=\set{t-d\geq k}=\set{d\leq t-k}=\set{B_{t-k}\geq k}\,,
    \end{equation*}
    that all coincide with the event ‘‘there exists some $v\in\cV$ such that for all $t-k\leq s\leq t$, $v_s\ne v$''.
    Summing over $t<T$:
    \begin{align*}
        \sum_{t<T}\esp{A_t}&=\sum_{t<T}\sum_{k<t}\proba{B_{t-k}\geq k}\\
        &=\sum_{\ell<T}\sum_{s<T-\ell} \proba{B_\ell \geq s}\\
        &\leq \sum_{\ell<T}\sum_{s\geq 0} \proba{B_\ell \geq s}\\
        &\leq \sum_{\ell<T} \esp{B_\ell}\\
        &\leq T\tau_\cov\,.
    \end{align*}
\end{proof}

\begin{lemma}\label{lem:frac_rdm}
	Let $(a_t)_{t\geq0},(b_t)_{t\geq0}$ be two sequences of real-valued random variables. 
    Let $(\cF_t)_{t\geq 0}$ be a filtration. 
    Assume that $b_t$ is positive and $\cF_t$-measurable for all $t$, and that $\esp{a_t|\cF_t}\leq 0$.
	Then, denoting $H_T=\frac{\sum_{t=0}^T a_t}{\sum_{t=0}^Tb_t}$, the sequence $(\esp{H_T})_{T\geq0}$ is non-increasing, so that $\esp{H_T}\leq 0$ for all $T$.
\end{lemma}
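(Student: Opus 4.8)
The plan is to prove the claim by showing that $\esp{H_{T}}$ is non-increasing in $T$, from which $\esp{H_T}\leq\esp{H_0}\leq 0$ follows once we verify the base case. The key structural observation is that moving from $T$ to $T+1$ adds the term $a_{T+1}$ to the numerator and $b_{T+1}$ to the denominator, so I would like to compare the fraction before and after this update. Concretely, writing $S_T=\sum_{t=0}^T a_t$ and $R_T=\sum_{t=0}^T b_t$ (with $R_T>0$ since each $b_t$ is positive), the difference $H_{T+1}-H_T$ can be manipulated into a form where the sign is governed by $a_{T+1}$ relative to $b_{T+1}H_T$.

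First I would compute $H_{T+1}-H_T$ directly. We have
\begin{equation*}
    H_{T+1}-H_T=\frac{S_T+a_{T+1}}{R_T+b_{T+1}}-\frac{S_T}{R_T}=\frac{R_T a_{T+1}-b_{T+1}S_T}{R_T(R_T+b_{T+1})}=\frac{b_{T+1}(a_{T+1}/b_{T+1}-H_T)}{R_T+b_{T+1}}\,,
\end{equation*}
but to avoid dividing by $b_{T+1}$ I would keep the cleaner numerator $R_T a_{T+1}-b_{T+1}S_T$. The denominator $R_T(R_T+b_{T+1})$ is strictly positive and is $\cF_{T+1}$-measurable, as are $R_T$, $b_{T+1}$ and $S_T$; the only term not measurable with respect to $\cF_{T+1}$ is $a_{T+1}$, on which we only have the conditional bound $\esp{a_{T+1}|\cF_{T+1}}\leq 0$.

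The main step is then to take expectations using the tower property by conditioning on $\cF_{T+1}$. Since $R_T$, $b_{T+1}$, $S_T$, and hence the positive denominator $R_T(R_T+b_{T+1})$ are all $\cF_{T+1}$-measurable, I would write
\begin{equation*}
    \esp{H_{T+1}-H_T\,\middle|\,\cF_{T+1}}=\frac{R_T\,\esp{a_{T+1}|\cF_{T+1}}-b_{T+1}S_T}{R_T(R_T+b_{T+1})}\,.
\end{equation*}
The hypothesis $\esp{a_{T+1}|\cF_{T+1}}\leq 0$ combined with $R_T>0$ handles the first term, but the term $-b_{T+1}S_T$ is problematic: $S_T$ need not have a definite sign. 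This is the main obstacle, and I expect the intended resolution is an induction in which the hypothesis being carried forward is strong enough to control $S_T$ — most naturally the fact that $\esp{S_T}\leq 0$ (equivalently $\esp{H_T}\leq 0$, since $R_T>0$) is already available from the inductive step applied to the non-increasing claim. Indeed, taking a further expectation over $\cF_{T+1}$ and using that $b_t$ being $\cF_t$-measurable lets one peel off $S_T$'s dependence, one reduces to controlling $\esp{b_{T+1}S_T}$; since $b_{T+1}\ge 0$ and, inductively, each partial sum has nonpositive conditional expectation at the relevant time, this term is nonpositive.

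To finish, I would verify the base case $\esp{H_0}=\esp{a_0/b_0}\leq 0$, which follows from $\esp{a_0|\cF_0}\leq 0$ and $b_0>0$ being $\cF_0$-measurable via $\esp{a_0/b_0|\cF_0}=(1/b_0)\esp{a_0|\cF_0}\leq 0$. Combining the base case with the monotonicity $\esp{H_{T+1}}\leq\esp{H_T}$ established above yields $\esp{H_T}\leq\esp{H_0}\leq 0$ for all $T$, which is exactly the desired conclusion. The delicate point throughout is the careful bookkeeping of which quantities are $\cF_{T+1}$-measurable so that they may be pulled out of the conditional expectation, and ensuring the cross term $b_{T+1}S_T$ is controlled by the running inductive hypothesis rather than being bounded in isolation.
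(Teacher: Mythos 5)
Your algebra for the increment $H_{T+1}-H_T=\frac{R_Ta_{T+1}-b_{T+1}S_T}{R_T(R_T+b_{T+1})}$ and your base case are fine, but the argument breaks at exactly the point you flag, and in two distinct ways. First, $S_T$ is \emph{not} $\cF_{T+1}$-measurable under the stated hypotheses: nothing at all is assumed about the measurability of the $a_t$'s (only $\esp{a_t|\cF_t}\le 0$), and in the application to Theorem~\ref{thm:mcSAG_varying} the $a_t$ contain the future return times $p_v(t)-t$, so $a_t$ is genuinely not $\cF_{T+1}$-measurable even for $t\le T$. You therefore may not pull $S_T$ out of $\esp{\cdot\,|\,\cF_{T+1}}$; and replacing it by $\esp{S_T|\cF_{T+1}}$ yields a quantity with no usable sign, since conditioning $a_t$ on the larger $\sigma$-algebra $\cF_{T+1}\supset\cF_t$ destroys the hypothesis. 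Second, the cross term is not controlled by the induction you invoke: the inductive information $\esp{H_T}\le 0$ (or even $\esp{S_T}\le 0$, which does follow by the tower property) does not imply $\esp{b_{T+1}S_T/(R_T(R_T+b_{T+1}))}\ge 0$, because the positive weight $b_{T+1}$ is $\cF_{T+1}$-measurable and may correlate with the sign of $S_T$; your closing assertion that ``this term is nonpositive'' is precisely the unproved step. Worse, the monotonicity you are differencing toward is false under the lemma's hypotheses: take everything deterministic with $a_0=-1$, $a_1=0$, $b_0=1$, $b_1=100$; all assumptions hold, yet $\esp{H_0}=-1<\esp{H_1}=-1/101$. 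So no bookkeeping of measurable factors can make $\esp{H_{T+1}-H_T}\le 0$ come out, and the base-case-plus-monotonicity scheme cannot be completed as proposed.

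For comparison, the paper argues along a different route that correctly avoids your first issue: it conditions $H_T$ itself on $\cF_T$, writes $\esp{H_T|\cF_T}=\big(\esp{a_T|\cF_T}+\sum_{t<T}\esp{a_t|\cF_T}\big)/\sum_{t\le T}b_t$ using only that the denominator is $\cF_T$-measurable, drops $\esp{a_T|\cF_T}\le 0$, and then shrinks the denominator from $\sum_{t\le T}b_t$ to $\sum_{t<T}b_t$ so as to recognize $\esp{H_{T-1}}$ after taking expectations; note that it never needs any measurability of the $a_t$'s, keeping them inside conditional expectations throughout. However, the obstruction you identified is genuine and resurfaces there: shrinking a positive denominator preserves ``$\le$'' only when the numerator $\sum_{t<T}\esp{a_t|\cF_T}$ is nonnegative, which is neither assumed nor expected (one hopes it is nonpositive), and the deterministic example above contradicts that step as well. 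In fact, under the hypotheses exactly as stated even the weaker conclusion $\esp{H_T}\le 0$ can fail once the $b_t$'s are random: with $\cF_0$ trivial, $a_0=\pm 1$ with probability $1/2$ each, $b_0=1$, $\cF_1=\sigma(a_0)$, $a_1\equiv 0$, and $b_1$ chosen small on $\{a_0=1\}$ and large on $\{a_0=-1\}$, one gets $\esp{H_1}=\frac12\big(\frac{1}{1+b_1(1)}-\frac{1}{1+b_1(-1)}\big)>0$. So the gap in your proposal is not a missing trick but a sign obstruction inherent to the statement: closing it requires strengthening the hypotheses (for instance, pointwise nonpositive summands after conditioning, or decoupling the later weights $b_{t+1},b_{t+2},\dots$ from the signs of $a_0,\dots,a_t$), and your suspicion about the cross term was exactly on target.
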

\begin{proof}
    For fixed $T\geq1$, we have, using the fact tha $b_t$ is $\cF_T$ measurable for $t\leq T$
    \begin{align*}
        \esp{H_T|\cF_T}&= \frac{ \esp{a_T|\cF_T}+ \sum_{t<T}\esp{a_t|\cF_T}}{\sum_{t\leq T}b_t}\\
        &\leq \frac{\sum_{t<T}\esp{a_t|\cF_T}}{\sum_{t\leq T}b_t}\\
        &\leq \frac{\sum_{t<T}\esp{a_t|\cF_T}}{\sum_{t< T}b_t}\,,
    \end{align*}
    using $\esp{a_T|\cF_T}\leq 0$ and $b_T>0$. Consequently, taking the mean, we obtain $\esp{H_T}\leq \esp{H_{T-1}}$.
\end{proof}

\section{Lower bound}\label{app:lower}

We prove the smooth non-convex version of Theorem~\ref{thm:lower}; the convex cases are proved in a similar way using exactly the same arguments, and the ‘‘most difficult function in the world'', as defined by \citet{nesterov_book}, rather than the one used by \citet{carmon2021lower}, albeit the two are closely related.

\begin{proof}[Proof of Theorem~\ref{thm:lower}]
    For $\xx\in\ell_2$ and $k\in\N$, denote by $\xx(k)$ its $k^{\rm th}$ coordinate.
    We split the function defined in Section 3.2 of \citet{carmon2021lower} (inspired by the ‘‘most difficult function in the world'' of \citet{nesterov_book}) between two nodes $v,w\in\cV$ maximizing $\esp{\tau_w|v_0=v}$, by setting $\pi_vf_v(\xx)=\frac{1}{2}\sum_{k\geq 1} 2\xx(2k)^2-2\xx(2k-1)\xx(2k) + \frac{1}{2}\alpha \xx(0)^2 -b \xx(0) + \frac{\alpha}{2}$ and $\pi_wf_w(\xx)=\frac{1}{2}\sum_{k\geq0} 2\xx(2k+1)^2 - 2\xx(2k+1)\xx(2k)$ for some $b,\alpha>0$.
    Then, we define $T_0=\tau_v$ and for~$t\geq0$,
    \begin{align*}
        &T_{2k+1}=\inf\set{t\geq T_{2k}\,|\, v_t=w}\quad\text{and}\\
        & T_{2k+2}=\inf\set{t\geq T_{2k+1}\,|\, v_t=w}\,.
    \end{align*}
    The second step of the proof is somewhat classical, and consists in observing that the black-box constraints of the algorithm together with the construction of the functions $f_v$ and $f_w$ defined in the proof sketch of Section~\ref{sec:lower} imply that:
    \begin{equation*}
        \left\{\begin{aligned}
            & \text{if}\quad v_t=v \quad \text{and}\quad \left\{\begin{aligned} 
                &\cM_t\supset{\rm Span}(\ee_i,i\leq 2k-1)\,\quad \text{then}\quad \cM_{t+1}\supset{\rm Span}(\ee_i,i\leq 2k)\,,\\
                &\cM_t\subset{\rm Span}(\ee_i,i\leq 2k)\,\quad \text{then}\quad \cM_{t+1}\subset{\rm Span}(\ee_i,i\leq 2k)\,,
            \end{aligned}\right. \\
            & \text{if}\quad v_t=w \quad \text{and} \quad 
            \left\{\begin{aligned} 
                &\cM_t\supset{\rm Span}(\ee_i,i\leq 2k)\,\quad \text{then}\quad \cM_{t+1}\supset{\rm Span}(\ee_i,i\leq 2k+1)\,,\\
                &\cM_t\subset{\rm Span}(\ee_i,i\leq 2k+1)\,\quad \text{then}\quad \cM_{t+1}\subset{\rm Span}(\ee_i,i\leq 2k+1)\,,
            \end{aligned}\right. \\
            & \text{if}\quad v_t\notin\edgevw, \quad \text{then} \quad \cM_t=\cM_{t+1}\,.
        \end{aligned}\right.
    \end{equation*}
    In other words, even dimensions are discovered by node $v$, while odd ones are discovered by node~$w$. The dimension $\R\ee_0$ is discovered by node $v$ thanks to the term $-b\ee_0$.
    Using Theorem~1 of \citet{carmon2021lower}, for a right choice of parameters $\alpha,b>0$, $f$ is $L$-smooth and satisfies $f(\xx_0)-\inf_\xx f(\xx)\leq \Delta$, together with, any $k$ and any $\xx\in\cM_t\subset{\rm Span}(\ee_i,i\leq 2k)$, 
    \begin{equation*}
        \NRM{\nabla f(\xx)}^2=\frac{L\Delta}{16k^2}\,.
    \end{equation*}
    This lower bound proof technique is explained in a detailed and enlightening fashion in Chapter 3.5 of  \citet{bubeck2014convex}.

    Then, the final and more technical step of the proof consists in upper bounding $\E k(t)$. If $(T_{k+1}-T_k)_{k\geq0}$ were independent from $k(t)$, using $\esp{T_{k+1}-T_k}=\tau_\hit$ for $k$ even, we would directly obtain $t\geq \esp{T_{k(t)}}\geq \esp{k(t)}-1)\tau_\hit/2$. 
    However, these random variables are not independent: since tail effects can happen, we need a finite second moment for hitting times, and the proof is a bit trickier.
    First, note that:
    \begin{equation*}
        \esp{k(t)}=\sum_{0\leq k\leq t} \proba{k(t)\geq k}= \sum_{0\leq k\leq t}\proba{T_k\leq t}\,. 
    \end{equation*}
    Let $(X_\ell)_{\ell\geq0}$ be \emph{i.i.d.} random variables of same law as $\tau_w$ conditioned on $v_0=v$.
    We have $\esp{X_\ell}=\esp{\tau_w|v_0=v}=\tau_\hit$, and $\var(X_\ell)<\infty$ (by assumption).
    Let $S_k=\sum_{\ell=0}^{k-1}X_\ell$ ($S_k$ has the same law as $\sum_{\ell=0}^{k-1}T_{2k+1}-T_{2k}$ ), so that, using the Markov property of $(v_t)$, $T_k$ stochastically dominates $S_{\lfloor k/2\rfloor}$. 
    Hence, $\proba{T_k\leq t}\leq \proba{S_{\lfloor k/2\rfloor}\leq t}$. Then, using Chebychev inequality, for any $\ell\geq0$ and for $t$ such that $\ell\tau_\hit\geq t$, we have:
    \begin{align*}
        \proba{S_{\ell}\leq t}&=\proba{S_\ell -\ell\tau_\hit \leq t-\ell\tau_\hit}\\
        &=\proba{(S_\ell -\ell\tau_\hit)^2 \leq (t-\ell\tau_\hit)^2}\\
        &\leq \frac{\ell\var(X_0)}{(t-\ell\tau_\hit)^2}\,.
    \end{align*}
    We then have:
    \begin{align*}
        \esp{k(t)}&\leq 2 \sum_{0\leq \ell \leq t/2} \proba{S_\ell\leq t}\\
        &= 2 \sum_{0\leq \ell \leq 2t/\tau_\hit}\proba{S_\ell\leq t} + 2 \sum_{2t/\tau_\hit\leq \ell \leq t/2}\proba{S_\ell\leq t}\\
        &\leq \frac{4t}{\tau_\hit} + 2 \sum_{2t/\tau_\hit\leq \ell \leq t/2}\frac{\ell\var(X_0)}{(t-\ell\tau_\hit)^2}\,.
    \end{align*}
    We finally show that the second term stays bounded:
    \begin{align*}
        \sum_{2t/\tau_\hit\leq \ell \leq t/2}\frac{\ell}{(t-\ell\tau_\hit)^2}&=\frac{1}{\tau_\hit^2}\sum_{0\leq \ell \leq t/2-2t/\tau_\hit}\frac{\ell+2t/\tau_\hit}{(\ell + t/\tau_\hit)^2}\\
        &=\frac{1}{\tau_\hit^2}\sum_{0\leq \ell \leq t/2-2t/\tau_\hit}\frac{\ell}{(\ell + t/\tau_\hit)^2} + \frac{2}{\tau_\hit^2}\sum_{0\leq \ell \leq t/2-2t/\tau_\hit}\frac{t/\tau_\hit}{(\ell + t/\tau_\hit)^2}\,.
    \end{align*}
    First, using a comparison with a continuous sum, we have:
    \begin{equation*}
        \sum_{0\leq \ell \leq t}\frac{\ell}{(\ell + t/\tau_\hit)^2}\leq \sum_{0\leq \ell \leq t}\frac{1}{(\ell + t/\tau_\hit)}\leq \ln(\frac{t}{t/\tau_\hit})=\ln(\tau_\hit)\,,
    \end{equation*}
    since for $a,x>0$, $\int_0^{ax} \frac{y\dd y}{(y+a)^2}\leq \int_0^{ax} \frac{\dd y}{(y+a)}=\ln(x)$. Finally, using $\sum_{\ell\geq 1}\frac{1}{(a+\ell)^2}\leq \int_0^\infty\frac{\dd y}{(y+a)^2}=\frac{1}{a}$, we bound the second sum as:
    \begin{equation*}
        \sum_{0\leq \ell \leq t/2-2t/\tau_\hit}\frac{t/\tau_\hit}{(\ell + t/\tau_\hit)^2}\leq \frac{\tau_\hit}{t}+1\,.
    \end{equation*}
    Wrapping our arguments together, we end up with:
    \begin{equation*}
        \esp{k(t)}\leq \frac{4t}{\tau_\hit} + \frac{2\var(\tau_v)}{\tau_\hit^2}(\ln(\tau_\hit) + 1 + \frac{\tau_\hit}{t})\,.
    \end{equation*}
    For $t$ big enough, we end up with $\esp{k(t)}\leq 5t/\tau_\hit$, so that since $\E\NRM{\nabla f(\xx_t)}^2\geq L\Delta/(16\esp{k(t)}^2)$ as explained in the main text, we have:
    \begin{equation*}
        \NRM{\nabla f(\xx_t)}^2=\Omega\left(\frac{L\Delta\tau_\hit^2}{t^2}\right)\,.
    \end{equation*}
\end{proof}

\section{Markov chain stochastic gradient descent: proof of Theorem~\ref{thm:mcsgd}}\label{app:mcsgd}

%In the following proofs, for negative indices, we write, for $s\geq0$: $\xx_{-s}=\xx_0$, $F_{-s}=F_0$ (and so on) by convention.

\textbf{The following proofs in this Appendix section are valid for finite as well as infinite state spaces $\cV$.
}

We start by proving the following bound on $\esp{\NRM{\nabla f_{v_t}(\xx_t)}^2}$.
Note that this bound can be used for any $t\geq\tau_\mix$. 
\begin{lemma}
    For $t\geq0$ and if $v_t\sim\pi_t$ for $\dd_{\rm TV}(\pi_t,\pi)\leq \pi_{\min}/2$, we have:
    \begin{equation*}
        \esp{\NRM{\nabla f_{v_t}(\xx_t)}^2}\leq 3\bar\sigma^2 + 2\esp{\NRM{\nabla f(\xx_t)}^2}\,.
    \end{equation*}
\end{lemma}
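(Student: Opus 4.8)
The plan is to reduce the statement to a deterministic inequality at a fixed point and then substitute the random iterate, which sidesteps the correlation between $v_t$ and $\xx_t$. First I would fix $\xx\in\R^d$, write the law of $v_t$ as $\pi_t$, and decompose the average of the squared per-node gradient around its stationary value:
\begin{equation*}
\sum_{v\in\cV}\pi_t(v)\NRM{\nabla f_v(\xx)}^2 = \sum_{v\in\cV}\pi(v)\NRM{\nabla f_v(\xx)}^2 + \sum_{v\in\cV}\big(\pi_t(v)-\pi(v)\big)\NRM{\nabla f_v(\xx)}^2\,.
\end{equation*}
The first (stationary) sum is exactly $\E_{v\sim\pi}\NRM{\nabla f_v(\xx)}^2$, which by the bias--variance identity equals $\E_{v\sim\pi}\NRM{\nabla f_v(\xx)-\nabla f(\xx)}^2 + \NRM{\nabla f(\xx)}^2$ (since $\nabla f(\xx)=\E_{v\sim\pi}\nabla f_v(\xx)$), and is therefore at most $\bar\sigma^2 + \NRM{\nabla f(\xx)}^2$ by Assumption~\ref{hyp:dissimilarities}.

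For the second (excess) sum, I would drop the negative contributions and keep only the positive parts, then use the crude bound $\NRM{\nabla f_v(\xx)}^2 \le 2\sigma_v^2 + 2\NRM{\nabla f(\xx)}^2$ (again from Assumption~\ref{hyp:dissimilarities}):
\begin{equation*}
\sum_{v}\big(\pi_t(v)-\pi(v)\big)\NRM{\nabla f_v(\xx)}^2 \le \sum_v \big(\pi_t(v)-\pi(v)\big)_+ \big(2\sigma_v^2 + 2\NRM{\nabla f(\xx)}^2\big)\,.
\end{equation*}
Here two facts close the argument: (i) $\big(\pi_t(v)-\pi(v)\big)_+\le \pi(v)$, so that $\sum_v(\pi_t(v)-\pi(v))_+\sigma_v^2 \le \bar\sigma^2$; and (ii) $\sum_v(\pi_t(v)-\pi(v))_+ = \dd_{\rm TV}(\pi_t,\pi) \le \pi_{\min}/2 \le 1/2$. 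Together these give $2\bar\sigma^2 + \NRM{\nabla f(\xx)}^2$ for the excess term, and adding the two contributions yields $\sum_v\pi_t(v)\NRM{\nabla f_v(\xx)}^2 \le 3\bar\sigma^2 + 2\NRM{\nabla f(\xx)}^2$ for every fixed $\xx$. Plugging in $\xx=\xx_t$ and taking expectations then gives the claim.

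The only nontrivial ingredient --- and the place the hypothesis $\dd_{\rm TV}(\pi_t,\pi)\le\pi_{\min}/2$ is essential --- is fact (i). It follows because $|\pi_t(v)-\pi(v)|\le \sum_w|\pi_t(w)-\pi(w)| = 2\dd_{\rm TV}(\pi_t,\pi) \le \pi_{\min} \le \pi(v)$, which is precisely why the paper adopts the nonstandard mixing threshold $\pi_{\min}/2$: it upgrades the global total-variation closeness into the pointwise domination $\pi_t(v)\le 2\pi(v)$. I expect this to be the main (if short) obstacle, since without it the excess sum could only be controlled by $\sigma_{\max}^2$ rather than the averaged $\bar\sigma^2$. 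For infinite state spaces the same proof goes through verbatim with the sums replaced by integrals against $\pi$ and $\pi_t(v)$ read as the density $\dd\pi_t/\dd\pi(v)$, the condition then guaranteeing this density is bounded by $2$.
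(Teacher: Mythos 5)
Your deterministic core is correct: for any \emph{fixed} $\xx\in\R^d$, the split into stationary and excess mass, the bias--variance identity, and your facts (i)--(ii) do give $\sum_{v}\pi_t(v)\NRM{\nabla f_v(\xx)}^2\leq 3\bar\sigma^2+2\NRM{\nabla f(\xx)}^2$. The genuine gap is the very last line, ‘‘plugging in $\xx=\xx_t$ and taking expectations'': this does \emph{not} sidestep the correlation between $v_t$ and $\xx_t$ --- it silently assumes independence. Taking expectations in your pointwise-in-$\xx$ inequality bounds $\sum_v\pi_t(v)\esp{\NRM{\nabla f_v(\xx_t)}^2}$, where $\pi_t$ is the \emph{marginal} law of $v_t$, whereas the quantity the lemma asserts control over is
\begin{equation*}
\esp{\NRM{\nabla f_{v_t}(\xx_t)}^2}=\esp{\sum_{v\in\cV}\proba{v_t=v\,|\,\xx_t}\NRM{\nabla f_v(\xx_t)}^2}\,,
\end{equation*}
and the two coincide only if $v_t$ is independent of $\xx_t$. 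In MC-SGD, $\xx_t$ is a function of $v_0,\ldots,v_{t-1}$ and $v_t\sim P_{v_{t-1},\cdot}$, so the conditional law of $v_t$ given $\xx_t$ can be arbitrarily far from $\pi_t$ (on a cycle, it is supported on the neighbors of $v_{t-1}$, whatever $t$ is). This is exactly the difficulty the paper flags in its introduction --- $\esp{\nabla f_{v_t}(\xx_t)|\xx_t}$ can be far from $\nabla f(\xx_t)$ --- and it is also why your bias--variance identity, which is what your decomposition buys over the crude bound, fails conditionally on $\xx_t$: the cross term does not vanish.

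The repair, which is the paper's actual proof, decouples $v$ from $\xx$ \emph{before} taking any expectation, exploiting that Assumption~\ref{hyp:dissimilarities} is uniform in $\xx$: pointwise on the joint realization, $\NRM{\nabla f_{v_t}(\xx_t)}^2\leq 2\NRM{\nabla f_{v_t}(\xx_t)-\nabla f(\xx_t)}^2+2\NRM{\nabla f(\xx_t)}^2\leq 2\sigma_{v_t}^2+2\NRM{\nabla f(\xx_t)}^2$. The random bound $\sigma_{v_t}^2$ depends on $v_t$ alone, so its expectation sees only the marginal law: $\esp{\sigma_{v_t}^2}=\sum_v\pi_t(v)\sigma_v^2\leq\tfrac32\bar\sigma^2$, using the singleton bound $|\pi_t(v)-\pi_v|\leq\dd_{\rm TV}(\pi_t,\pi)\leq\pi_{\min}/2\leq\pi_v/2$, i.e.\ $\pi_t(v)\leq\tfrac32\pi_v$ (note this is sharper than the $\pi_t(v)\leq 2\pi_v$ you derive via the $\ell_1$ norm, and the extra $\tfrac12$ is needed to land on the constant $3$ by this route). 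Your total-variation manipulations are fine in themselves; what cannot survive the Markovian correlation is any step that averages $\NRM{\nabla f_v(\xx_t)}^2$ over $v\sim\pi_t$ with $v$ and $\xx_t$ treated as independent.
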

\begin{proof}[Proof of the Lemma]
    We have for any $v\in\cV$ that $\proba{v_t=v}\leq \pi_v +\pi_v/2=3\pi_v/2$, so that
    \begin{align*}
        \esp{\NRM{\nabla f_{v_t}(\xx_t)}^2}&\leq 2\esp{\NRM{\nabla f_{v_t}(\xx_t)-\nabla f(\xx_t)}^2}+2\esp{\NRM{\nabla f(\xx_t)}^2}\\
        &\leq 2\sum_{v\in\cV}\proba{v_t=v}\sigma_v^2+2\esp{\NRM{\nabla f(\xx_t)}^2}\\
        &=3\bar\sigma^2+2\esp{\NRM{\nabla f(\xx_t)}^2}\,.
    \end{align*}
\end{proof}
The proof borrows ideas from both the analyses of delayed SGD \citep{mania2016perturbed} and SGD with biased gradients \citep{even2022sample}, thus refining MC-SGD initial analysis \citep{johansson2010randomizedincremental}. 
While a biased gradient analysis would not yield convergence to an $\eps$-stationary point for arbitrary $\eps$ (at every iterations, biases are non-negligible and can be arbitrary high), by enforcing a delay $\tau$ (of order $\tau_\mix$) in the analysis, we manage to take advantage of the ergodicity of the biases.

\subsection{Smooth non-convex case of Theorem~\ref{thm:mcsgd}}

\begin{proof}[Proof of Theorem~\ref{thm:mcsgd}.\ref{thm:mcsgd_nonsmooth}]
	Denoting $F_t=\E f(\xx_t)-f(\xx^\star)$, we have using smoothness:
	\begin{equation*}
		F_{t+1}-F_t\leq -\gamma\esp{\langle \nabla f_{v_t}(\xx_t),\nabla f(\xx_t)\rangle} +\frac{\gamma^2L}{2}\esp{\NRM{\nabla f_{v_t}(\xx_t)}^2}\,.
	\end{equation*}
	For the first term on the righthandside of the inequality, assuming that $t\geq\tau$ for some $\tau>0$ we explicit later in the proof:
	\begin{align*}
		\esp{-\gamma\langle \nabla f_{v_t}(\xx_t),\nabla f(\xx_t)\rangle}&=\esp{-\gamma\langle \nabla f_{v_t}(\xx_{t-\tau}),\nabla f(\xx_{t-\tau})\rangle} + \esp{-\gamma\langle \nabla f_{v_t}(\xx_t),\nabla f(\xx_t)-\nabla f(\xx_{t-\tau})\rangle}\\
        &\quad + \esp{-\gamma\langle \nabla f_{v_t}(\xx_t)-\nabla f_{v_t}(\xx_{t-\tau}),\nabla f(\xx_{t-\tau})\rangle}\,.
	\end{align*}
	First, we condition the first term on the filtration up to time $t-\tau$:
	\begin{align*}
		\esp{-\gamma\langle \nabla f_{v_t}(\xx_{t-\tau}),\nabla f(\xx_{t-\tau})\rangle}&=\esp{-\gamma\langle \E_{t-\tau}\nabla f_{v_t}(\xx_{t-\tau}),\nabla f(\xx_{t-\tau})\rangle}\\
		%&=\esp{-\gamma\langle \E_{t-\tau}\nabla f_{v_t}(\xx_t),\nabla f(\xx_{t})\rangle} +\esp{-\gamma\langle \E_{t-\tau}\nabla f_{v_t}(\xx_t),\nabla f(\xx_{t-\tau})-\nabla f(\xx_{t-\tau})\rangle}\\
		&\leq -\frac{\gamma}{2}\esp{\NRM{\E_{t-\tau}\nabla f_{v_{t-\tau}}(\xx_t)}^2} + \frac{\gamma}{2}\esp{\NRM{\nabla f(\xx_{t-\tau})-\E_{t-\tau}\nabla f_{v_t}(\xx_{t-\tau})}}\\
		&\quad-\frac{\gamma}{2}\esp{\NRM{\nabla f(\xx_{t-\tau})}^2}\,.
	\end{align*}
	Then, for $\tau\geq\tau_{\rm mix}(\pi_{\min}\eps)$, using the following lemma, we have, for $\eps<1/2$:
    \begin{equation*}
        \esp{-\gamma\langle \E_{t-\tau}\nabla f_{v_t}(\xx_{t-\tau}),\nabla f(\xx_{t-\tau})\rangle}\leq -\frac{\gamma}{4}\esp{\NRM{\nabla f(\xx_{t-\tau})}^2} + \gamma\eps^2\bar\sigma^2\,.
        \end{equation*}
    \begin{lemma}
        For $\tau\geq\tau_\mix(\eps\pi_{\min})$ and $t\geq\tau$,
        \begin{equation*}
            \esp{\NRM{\E_{t-\tau}\nabla f_{v_t}(\xx_{t-\tau})-\nabla f(\xx_{t-\tau})}^2}\leq 2\eps^2\esp{\NRM{\nabla f(\xx_{t-\tau})}^2}+2\eps^2 \bar\sigma^2\,.
        \end{equation*}
    \end{lemma}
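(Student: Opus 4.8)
The plan is to use the Markov property to identify the inner conditional expectation with a $\tau$-step transition, and then to control the resulting bias through the mixing-time bound on the total-variation distance. Writing $\cF_{t-\tau}=\sigma(v_0,\ldots,v_{t-\tau})$, the iterate $\xx_{t-\tau}$ is $\cF_{t-\tau}$-measurable, and by the Markov property the conditional law of $v_t$ given $\cF_{t-\tau}$ is the $\tau$-step transition out of $v_{t-\tau}$. Hence, setting $q_w=(P^\tau)_{v_{t-\tau},w}-\pi_w$ and using $\sum_{w\in\cV}q_w=0$ to subtract the constant vector $\nabla f(\xx_{t-\tau})$, the quantity inside the norm is
\begin{equation*}
    \E_{t-\tau}\nabla f_{v_t}(\xx_{t-\tau})-\nabla f(\xx_{t-\tau})=\sum_{w\in\cV}q_w\big(\nabla f_w(\xx_{t-\tau})-\nabla f(\xx_{t-\tau})\big)\,,
\end{equation*}
which is $\cF_{t-\tau}$-measurable, so it suffices to bound its squared norm pointwise and then take total expectation.

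First I would bound the squared norm by the triangle inequality followed by Cauchy--Schwarz with weights $|q_w|$:
\begin{equation*}
    \NRM{\sum_{w\in\cV}q_w(\nabla f_w-\nabla f)}^2\leq\Big(\sum_{w\in\cV}|q_w|\Big)\Big(\sum_{w\in\cV}|q_w|\NRM{\nabla f_w(\xx_{t-\tau})-\nabla f(\xx_{t-\tau})}^2\Big)\,.
\end{equation*}
The two factors are then controlled by the \emph{same} mixing estimate used in two different ways. Since $\tau\geq\tau_\mix(\eps\pi_{\min})$ and the total-variation distance to $\pi$ is nonincreasing along the chain, $\sum_{w}|q_w|=2\dd_{\rm TV}(P^\tau(v_{t-\tau},\cdot),\pi)\leq 2\eps\pi_{\min}$. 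For the first factor I would use $2\eps\pi_{\min}\leq\eps$ (valid because $\pi_{\min}\leq 1/2$ for $|\cV|\geq 2$), and for the second the pointwise consequence $|q_w|\leq\sum_{w'}|q_{w'}|\leq 2\eps\pi_{\min}\leq 2\eps\pi_w$, which turns the weighted sum into a $\pi$-average:
\begin{equation*}
    \sum_{w\in\cV}|q_w|\NRM{\nabla f_w-\nabla f}^2\leq 2\eps\sum_{w\in\cV}\pi_w\NRM{\nabla f_w(\xx_{t-\tau})-\nabla f(\xx_{t-\tau})}^2\leq 2\eps\bar\sigma^2\,,
\end{equation*}
where the last step is Assumption~\ref{hyp:dissimilarities} (or $2\eps(\NRM{\nabla f(\xx_{t-\tau})}^2+\bar\sigma^2)$ under the relaxed noise assumption of the footnote with $M=1$). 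Multiplying the two factors yields the pointwise bound $2\eps^2\bar\sigma^2$, and taking total expectation gives the claim, since $2\eps^2\bar\sigma^2\leq 2\eps^2\esp{\NRM{\nabla f(\xx_{t-\tau})}^2}+2\eps^2\bar\sigma^2$.

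The only genuine subtlety is arranging the two-sided use of the mixing estimate so the constants match the statement: the aggregate $\ell_1$ bound $\sum_w|q_w|\leq\eps$ handles the first Cauchy--Schwarz factor, while the pointwise bound $|q_w|\leq 2\eps\pi_w$ --- which crucially exploits $\pi_w\geq\pi_{\min}$ to convert an $\ell_1$ mixing estimate into a $\pi$-weighted one --- collapses the second factor into exactly the average dissimilarity $\bar\sigma^2$. Everything else (the Markovian identification of the conditional expectation and the Cauchy--Schwarz split) is routine.
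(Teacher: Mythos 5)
Your proof is correct, and its skeleton coincides with the paper's: both identify $\E_{t-\tau}\nabla f_{v_t}(\xx_{t-\tau})$ with a $\tau$-step transition average via the Markov property, apply the weighted Cauchy--Schwarz inequality, and convert the total-variation mixing bound into a $\pi$-weighted pointwise one through $\pi_{\min}\leq\pi_w$. The one genuine difference is your centering step: since $\sum_w q_w=0$, you subtract the constant $\nabla f(\xx_{t-\tau})$ before applying Cauchy--Schwarz, so the second factor is controlled directly by Assumption~\ref{hyp:dissimilarities} and you obtain the strictly sharper pointwise bound $2\eps^2\bar\sigma^2$, with no gradient-norm term at all. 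The paper instead bounds $\NRM{\sum_v(\proba{v_t=v|\xx_{t-\tau}}-\pi_v)\nabla f_v(\xx_{t-\tau})}^2$ without centering, and then pays with the split $\NRM{\nabla f_v}^2\leq 2\sigma_v^2+2\NRM{\nabla f}^2$; that split is precisely where the term $2\eps^2\esp{\NRM{\nabla f(\xx_{t-\tau})}^2}$ in the lemma's statement comes from, and your route shows it is in fact superfluous. You are also more careful than the paper about the factor of two in the total-variation conversion: from $\dd_{\rm TV}\leq\eps\pi_{\min}$ the standard definition only yields $|q_w|\leq 2\eps\pi_{\min}\leq 2\eps\pi_w$ (the paper asserts $\eps\pi_v$), and your asymmetric use of the two Cauchy--Schwarz factors --- the aggregate bound $\sum_w|q_w|\leq 2\eps\pi_{\min}\leq\eps$ via $\pi_{\min}\leq 1/2$ for $|\cV|\geq 2$, and the pointwise bound $|q_w|\leq 2\eps\pi_w$ for the weighted sum --- absorbs the discrepancy cleanly. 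Since your bound implies the stated one (and a sharper bias bound would only propagate harmlessly into the subsequent descent analysis), the proof is valid as written.
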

    \begin{proof}[Proof of the Lemma] We have:
        \begin{align*}
            \esp{\NRM{\E_{t-\tau}\nabla f_{v_t}(\xx_{t-\tau})-\nabla f(\xx_{t-\tau})}^2}&=\esp{\NRM{\sum_{v\in\cV} (\proba{v_t=v|\xx_{t-\tau}}-\pi_v) \nabla f_v(\xx_{t-\tau})}^2}\\
            &\leq \eps^2\sum_{v\in\cV}\pi_v\esp{\NRM{ \nabla f_v(\xx_{t-\tau})}^2}\,,
        \end{align*}
        where we used $|\proba{v_t=v|\xx_{t-\tau}}-\pi_v|\leq \eps\pi_v$ and convexity of the squared Euclidean norm. For that last term, 
    \begin{align*}
        \sum_{v\in\cV}\pi_v\esp{\NRM{ \nabla f_v(\xx_{t-\tau})}^2}&\leq \sum_{v\in\cV}2\pi_v\left(\esp{\NRM{ \nabla f(\xx_{t-\tau})}^2} + \sigma_v^2\right)\\
        &=2\esp{\NRM{\nabla f(\xx_{t-\tau})}^2} +2\bar\sigma^2\,,
    \end{align*}
    concluding the proof of the Lemma.
    \end{proof}
	Using gradient Lipschitzness and writing $\xx_t-\xx_{t-\tau}=-\gamma\sum_{s=\max(t-\tau,0)}^{t-1}\nabla f_{v_s}(\xx_s)$, we have:
	\begin{align*}
		\esp{-\gamma\langle \nabla f_{v_t}(\xx_t),\nabla f(\xx_t)-\nabla f(\xx_{t-\tau})\rangle}&\leq \gamma^2L\esp{\NRM{\nabla f_{v_t}(\xx_t)}\NRM{\sum_{s=\max(t-\tau,0)}^{t-1}\nabla f_{v_s}(\xx_s)}}\\
		&\leq \frac{\gamma^2L}{2} (\tau\esp{\NRM{\nabla f_{v_t}(\xx_t)}^2}+\sum_{s=\max(t-\tau,0)}^{t-1}\esp{\NRM{\nabla f_{v_s}(\xx_s)}^2}\big)\,.
	\end{align*}
	Similarly,
    \begin{equation*}
        \esp{-\gamma\langle \nabla f_{v_t}(\xx_t)-\nabla f_{v_t}(\xx_{t-\tau}),\nabla f(\xx_{t-\tau})\rangle}\leq\frac{\gamma^2L}{2} (\tau\esp{\NRM{\nabla f(\xx_{t-\tau})}^2}+\sum_{s=\max(t-\tau,0)}^{t-1}\esp{\NRM{\nabla f_{v_s}(\xx_s)}^2}\big)\,.
    \end{equation*}
	%\begin{equation*}
	%	-\esp{\gamma\langle \E_{t-\tau}\nabla f_{v_t}(\xx_t),\nabla f(\xx_{t-\tau})-\nabla f(\xx_{t-\tau})\rangle}\leq\frac{\gamma^2L}{2} (\tau\esp{\NRM{\E_{t-\tau}\nabla f_{v_t}}^2}+\sum_{s=t-\tau}^{t-1}\esp{\NRM{\nabla f_{v_s}(\xx_s)}^2}
	%\end{equation*}
	%Since $d_{\rm TV}(v_0,\pi)\leq 1/n$, we have $\esp{\NRM{\nabla_{v_t}f(\xx_t)}^2}\leq 2\esp{\NRM{\nabla f(\xx_t)}^2} + 2\bar\sigma^2$, so that, wrapping things up, we obtain:
	Wrapping things up, we obtain, for $t\geq\tau$ and $\tau\geq\tau_\mix$:
    \begin{align*}
		F_{t+1}-F_t&\leq -\frac{\gamma}{4}\esp{\NRM{\nabla f(\xx_{t-\tau})}^2}+ \gamma\eps^2\bar\sigma^2\\
        &\quad + \frac{\gamma^2 L}{2}\big((\tau+1)\esp{\NRM{\nabla f_{v_t}(\xx_t)}^2} +\tau \esp{\NRM{\nabla f(\xx_{t-\tau})}^2}+2\sum_{s=\max(t-\tau,0)}^{t-1}\esp{\NRM{\nabla f_{v_s}(\xx_s)}^2} \big)\\
        &\leq -\frac{\gamma}{4}\esp{\NRM{\nabla f(\xx_{t-\tau})}^2}+ \gamma\eps^2\bar\sigma^2 + (3\tau+1)\frac{\gamma^2L}{2}\\
        &\quad + \frac{\gamma^2 L}{2}\big((\tau+1)\esp{\NRM{\nabla f(\xx_t)}^2} +\tau \esp{\NRM{\nabla f(\xx_{t-\tau})}^2}+2\sum_{s=\max(t-\tau,0)}^{t-1}\esp{\NRM{\nabla f(\xx_s)}^2} \big)\,.
        %
        %-\esp{\NRM{\nabla f(\xx_t)}^2}\frac{\gamma}{2}\big(1- 2\gamma(1+2\tau) \big) + 2\gamma^2L\sum_{s=t-\tau}^{t-1}\esp{\NRM{\nabla f_{v_s}(\xx_s)}^2} + \gamma^2L\bar\sigma^2(1+4\tau)+\gamma\eps^2n^2\bar\sigma^2\\
		%&\leq -\esp{\NRM{\nabla f(\xx_t)}^2}\frac{\gamma}{4} + 2\gamma^2L\sum_{s=t-\tau}^{t-1}\esp{\NRM{\nabla f_{v_s}(\xx_s)}^2} + \gamma^2L\bar\sigma^2(1+4\tau) + \gamma\eps^2n^2\bar\sigma^2\,,	
	\end{align*}
	Summing for $\tau\leq t<T$: 
	\begin{equation*}
		\frac{1}{T}\sum_{ \tau\leq t<T}\esp{\NRM{\nabla f(\xx_{t-\tau})}^2}\leq \frac{4F_{\tau}}{\gamma T}+  \frac{1}{T}\sum_{\tau\leq t<T}6\gamma L\tau\big(\esp{\NRM{\nabla f(\xx_t)}^2}+\esp{\NRM{\nabla f(\xx_{t-\tau})}^2}\big) +2\big(2\eps^2+\gamma(3\tau+1)\big)\bar\sigma^2\,,
	\end{equation*}
	leading to, for $\gamma \leq \frac{1}{12 L\tau}$:
	\begin{align}\label{eq:proof-mcsgd}
		\frac{1}{T}\sum_{ t<T-\tau}\esp{\NRM{\nabla f(\xx_t)}^2}&\leq\frac{4F_{\tau}}{\gamma T}+ \frac{6\gamma L\tau}{T}\sum_{T-\tau\leq t <T}\esp{\NRM{\nabla f(\xx_t)}^2}+ 2\big(2\eps^2+\gamma(3\tau+1)\big)\bar\sigma^2\,.
	\end{align}
    We now prove that for any $t\geq 0$, we have $\sup_{t\leq s\leq t+\tau}\esp{\NRM{\nabla f(\xx_s)}^2} \leq 4\esp{\NRM{\nabla f(\xx_t)}^2} + 8\gamma^2L^2\tau^2\sigma^2$. Let $t\leq s<t+\tau$.
    \begin{align*}
        \esp{\NRM{\nabla f(\xx_s)}^2} & \leq 2\esp{\NRM{\nabla f(\xx_t)}^2} + 2 \esp{\NRM{\nabla f(\xx_s)-\nabla f(\xx_t)}^2}\\
        &\leq 2\esp{\NRM{\nabla f(\xx_t)}^2} + 2L^2\gamma^2\esp{\sum_{r=t}^{s-1}\NRM{\nabla f_{v_r}(\xx_r)}^2}\\
        &\leq 2\esp{\NRM{\nabla f(\xx_t)}^2} + 4L^2\gamma^2\tau\sum_{r=t}^{s-1}\esp{\NRM{\nabla f(\xx_r)}^2} + \bar\sigma^2\\
        &\leq 2\esp{\NRM{\nabla f(\xx_t)}^2} + 4L^2\gamma^2\tau^2 (\sup_{t\leq s\leq t+\tau}\esp{\NRM{\nabla f(\xx_s)}^2} + \bar\sigma^2)\,,
    \end{align*}
    leading to the desired result for $\gamma\leq 1/(8L\tau)$. Plugging this in \eqref{eq:proof-mcsgd}:
    \begin{align*}
        \frac{1}{T}\sum_{ t<T-\tau}\esp{\NRM{\nabla f(\xx_t)}^2}&\leq\frac{4F_{\tau}}{\gamma T}+ \frac{24\gamma L\tau}{T}\sum_{T-\tau\leq t <T}\esp{\NRM{\nabla f(\xx_{t-\tau})}^2}\\
        &\quad +\frac{\tau}{T}4L^2\gamma^2\tau^2\bar\sigma^2 + 2\big(2\eps^2+\gamma(3\tau+1)\big)L\bar\sigma^2\,.
    \end{align*}
    Now, for $\gamma=\min(1/(48L\tau), \sqrt{F_0/(TL\tau\bar\sigma^2)} )$, $\eps=1/\sqrt{T}$, and so $\tau=\tau_\mix\ln(T)$, we have:
    \begin{equation}\label{eq:proof_mcsgd_conclu_nonconvex}
        \frac{1}{T}\sum_{ t<T-\tau}\esp{\NRM{\nabla f(\xx_t)}^2}\leq \frac{196\tau LF_\tau}{T} + 7\sqrt{\frac{LF_0\bar\sigma^2}{T}}\,.
    \end{equation}
    We now upper bound $F_\tau$. For any $t$ and $\gamma<1/(2L)$,
    \begin{equation*}
        F_{t+1}-F_t\leq \frac{\gamma}{2}\esp{\NRM{\nabla f(\xx_t)-\nabla f_{v_t}(\xx_t)}^2}\leq  \gamma\sigma_{\max}^2/2\,,
    \end{equation*}
    where the first inequality is a simplified version of the descent lemma with biased gradient at the beggining of this proof, and the second inequality uses the initialization properties of $v_0$.
    Thus, we obtain $F_\tau\leq F_0 +\gamma\tau\sigma_{\max}^2/2\leq F_0+\sigma_{\max}^2/L$ for our choice of $\gamma$.
    We thus conclude by plugging this in \eqref{eq:proof_mcsgd_conclu_nonconvex} applied for $T+\tau$ instead of $T$, yielding the desired result.

    The condition for the upper-bound we proved above to be true, namely $T\geq\tau=\tau_\mix\ln(T)$, is always satisfied for $T\geq 2\tau_\mix\ln(\tau_\mix)$. Indeed, if $T\leq \tau_\mix^2$, then $\tau_\mix\ln(T)\leq 2\tau_\mix\ln(\tau_\mix)\leq T$, and otherwise we have $\tau_\mix\ln(T)\leq \sqrt{T}\ln(T)\leq T$. This concludes the proof, and
    $\tilde\xx_0$ in the Theorem corresponds to $\xx_\tau$.
\end{proof}

\subsection{Under a $\mu$-PL inequality}

\begin{proof}[Proof of Theorem~\ref{thm:mcsgd_PL}.\ref{thm:mcsgd_PL}] 
    We start from:
    \begin{align*}
        F_{t+1}-F_t&\leq -\frac{\gamma}{4}\esp{\NRM{\nabla f(\xx_{t-\tau})}^2}+ \gamma\eps^2\bar\sigma^2 + (3\tau+1)\frac{\gamma^2L}{2}\\
        &\quad + \frac{\gamma^2 L}{2}\big((\tau+1)\esp{\NRM{\nabla f(\xx_t)}^2} +\tau \esp{\NRM{\nabla f(\xx_{t-\tau})}^2}+2\sum_{s=\max(t-\tau,0)}^{t-1}\esp{\NRM{\nabla f(\xx_s)}^2} \big)\,.
    \end{align*}
    If $f$ satisfies a $\mu$-PL inequality, then $-\esp{\NRM{\nabla f(\xx_{t-\tau})}^2}\leq -2\mu F_{t-\tau}$, so that, for some $\alpha\in(0,1)$:
    \begin{align*}
        F_{t+1}-F_t&\leq -\frac{\alpha\gamma\mu}{4}F_{t-\tau} - \frac{(1-\alpha)\gamma}{8}\esp{\NRM{\nabla f(\xx_{t-\tau})}^2}+ \gamma\eps^2\bar\sigma^2 + (3\tau+1)\frac{\gamma^2L}{2}\bar\sigma^2\\
        &\quad + \frac{\gamma^2 L}{2}\big((\tau+1)\esp{\NRM{\nabla f(\xx_t)}^2} +\tau \esp{\NRM{\nabla f(\xx_{t-\tau})}^2}+2\sum_{s=\max(t-\tau,0)}^{t-1}\esp{\NRM{\nabla f(\xx_s)}^2} \big)\,.
    \end{align*}
    For $t\geq0$, let $P_t=(1-\alpha\gamma\mu/4)^{-t}$. 
    We multiply the above expression by $P_{t+1}$ and sum for $t<T$, hoping for cancellations. 
    For $T\geq \tau$:
    \begin{align*}
        \sum_{\tau\leq t<T}P_{t+1}\big(F_t-F_{t+1} -\frac{\alpha\gamma\mu}{4}F_{t-\tau} \big)& = \sum_{\tau\leq t<T}P_{t+1}\big((1-\frac{\alpha\gamma\mu}{4})F_t-F_{t+1} + \frac{\alpha\gamma\mu}{4}(F_t-F_{t-\tau}) \big)\\
        & = \sum_{\tau\leq t<T}P_{t}F_t - \sum_{\tau+1\leq t\leq T}P_{t}F_t\\
        &\quad + \frac{\alpha\gamma\mu}{4}\sum_{\tau\leq t<T}P_{t+1} F_t - \frac{\alpha\gamma\mu}{4}\sum_{\tau\leq t<T}P_{t+1} F_{t-\tau} \\
        &\leq P_\tau F_\tau-P_TF_T + \frac{\alpha\gamma\mu}{4}\sum_{\tau\leq t<T}P_{t+1} F_t - \frac{P_\tau \alpha\gamma\mu}{4}\sum_{0\leq t<T-\tau}P_{t+1} F_{t}\\
        &\leq P_\tau F_\tau-P_TF_T + \frac{\alpha\gamma\mu}{4}\sum_{T-\tau\leq t<T}P_{t+1} F_t\\
        &\leq P_\tau F_\tau-P_TF_T + \frac{\alpha\gamma}{8}\sum_{T-\tau\leq t<T}P_{t+1} \esp{\NRM{\nabla f(\xx_t)}^2}\,,
    \end{align*}
    using the $\mu$-PL inequality. For $t\geq0$, we denote $R_t= \esp{\NRM{\nabla f(\xx_t)}^2}$. 
    We now handle the ‘‘$R_t$'' terms.
    \begin{align*}
        - \sum_{\tau\leq t<T}\frac{(1-\alpha)\gamma}{8}P_{t+1}R_{t-\tau}& + \sum_{\tau\leq t<T}\frac{\gamma^2 L}{2}\big((\tau+1)P_{t+1}R_t+\tau P_{t+1} R_{t-\tau}+2\sum_{s=t-\tau}^{t-1}P_{t+1}R_s \big)\\
        &\leq - \sum_{0\leq t<T-\tau}\frac{(1-\alpha)\gamma}{8}P_{t+\tau+1}R_{t}\\
        & + \frac{\gamma^2L}{2}\left( (\tau+1)\sum_{\tau\leq t<T}P_{t+1}R_t + \tau \sum_{0\leq t<T-\tau}P_{t+1}R_t +2\tau\sum_{ t<T}R_t P_{t+\tau}      \right)\\
        &= - \sum_{0\leq t<T-\tau}P_{t+1}R_{t}\gamma\Big(\frac{(1-\alpha)}{8}P_\tau -  \frac{\gamma L}{2}(2\tau+1 + 2\tau P_{\tau-1})\Big)\\
        &+ \frac{\gamma^2 L}{2} \sum_{T-\tau\leq t<T} \big((\tau+1+2\tau P_{\tau-1}) \big)P_{t+1}R_t\\
        &\leq - \sum_{0\leq t<T-\tau}\frac{(1-\alpha)\gamma}{16}P_{t+\tau+1}R_{t}\\
        &+ \quad\frac{(1-\alpha)\gamma}{16\beta} \sum_{T-\tau\leq t<T}P_{t+1}R_t\,,
    \end{align*}
    if $\gamma$ satisfies $\gamma\leq \frac{1-\alpha}{8\beta L(5\tau+1)}$ and $P_\tau\leq 2$, for some $\beta\geq1$. 
    Since for $\gamma\mu\leq1$, $P_\tau\leq e^{\tau\mu\gamma}$, $P_\tau\leq2$ can be ensured with $\gamma\leq \frac{1}{2\tau L}$.
    All in one, we have:
    \begin{align*}
        0&\leq P_\tau F_\tau-P_TF_T + \gamma\Big(\frac{\alpha}{8}+\frac{1-\alpha}{16\beta}\Big)\sum_{T-\tau\leq t<T}P_{t+1}R_t\\
        &\quad - \sum_{0\leq t<T-\tau}\frac{(1-\alpha)\gamma}{16}P_{t+\tau+1}R_{t}\\
        &\quad + \Big(\gamma\eps^2\bar\sigma^2 + (3\tau+1)\frac{\gamma^2L}{2}\bar\sigma^2 \Big) \sum_{\tau\leq t<T} P_{t+1}\,.
    \end{align*}
    Using what we proved in the previous proof, we have $R_t\leq 4R_{t-\tau} + 8\gamma^2L^2\tau^2\sigma^2$ for $T-\tau\leq t<T$, so that:
    \begin{align*}
        \gamma\Big(\frac{\alpha}{8}+\frac{1-\alpha}{16\beta}\Big)\sum_{T-\tau\leq t<T}P_{t+1}R_t&\leq 4\gamma\Big(\frac{\alpha}{8}+\frac{1-\alpha}{16\beta}\Big)\sum_{T-2\tau\leq t<T-\tau}P_{t+\tau+1}R_t \\
        &\quad + 8\gamma^2L^2\tau^2\sigma^2\gamma\Big(\frac{\alpha}{8}+\frac{1-\alpha}{16\beta}\Big)\sum_{T-2\tau\leq t<T-\tau}P_{t+\tau+1}\,.
    \end{align*}
    Consequently, for $4\gamma\Big(\frac{\alpha}{8}+\frac{1-\alpha}{16\beta}\Big)\leq \frac{1-\alpha}{16}\gamma$, which can be ensured with $\alpha=1/16$ and $\beta=8$, we have:
    \begin{align*}
        0&\leq P_\tau F_\tau-P_TF_T + \frac{1}{8}\gamma^3L^2\tau^2\sigma^2\sum_{T-2\tau\leq t<T-\tau}P_{t+\tau+1} \\
        &\quad + \Big(\gamma\eps^2 + (3\tau+1)\frac{\gamma^2L}{2} \Big)\bar\sigma^2 \sum_{\tau\leq t<T} P_{t+1}\,,
    \end{align*}
    so that:
    \begin{align*}
        F_T&\leq F_\tau /P_{T-\tau} + \gamma^2\bar\sigma^2L\Big(\frac{\eps^2}{L\gamma} +  \frac{3\tau+1 }{2} + \frac{\gamma L\tau^2}{8}  \Big) \frac{\sum_{t\leq T}P_t}{P_T}\\
        &\leq 2 F_\tau /P_T + \frac{2\gamma \bar\sigma^2}{\mu}L\Big(\frac{\eps^2}{L\gamma} +  \frac{3\tau+1 }{2} + \frac{\gamma L\tau^2}{8}  \Big)\\
        &\leq 2 F_\tau /P_T + \frac{2\gamma \bar\sigma^2}{\mu}L(\frac{\eps^2}{L\gamma}+\frac{1}{2} + 2\tau)\,.
    \end{align*}
    %We now upper bound $F_\tau$. For any $t$ and $\gamma<1/(2L)$,
    %\begin{equation*}
     %   F_{t+1}-F_t\leq \frac{\gamma}{2}\esp{\NRM{\nabla f(\xx_t)-\nabla f_{v_t}(\xx_t)}^2}\leq  3\gamma\bar\sigma^2/2\,,
    %\end{equation*}
    %where the first inequality is a simplified version of the descent lemma with biased gradient at the beggining of this proof, and the second inequality uses the initialization properties of $v_0$.
    %Thus, we obtain $F_\tau\leq F_0 +3\gamma\tau\bar\sigma^2/2\leq F_0+\bar\sigma^2/L$ for our choice of $\gamma$.
    Finally, using $F_\tau\leq F_0 +\sigma_{\max}^2/L$, if $\gamma\leq \frac{1}{64(5\tau+1)}$ where $\tau=\tau_\mix(\eps)$:
    \begin{equation*}
        F_T\leq 2 (F_0+\sigma_{\max}^2/L) (1-\frac{\gamma\mu}{8})^T + \frac{2\gamma \bar\sigma^2}{\mu}(\frac{\eps^2}{L\gamma}+\frac{1}{2} + 2\tau)
    \end{equation*}
    We thus choose $\eps=\sqrt{1/T}$ so that $\tau\leq\tau_\mix\ln(T)$, and stepsize $\gamma=\min(\frac{8\ln(T(F_0+\bar\sigma^2)/\bar\sigma^2)}{\mu T},\frac{1}{64(5\tau+1)})$  , leading to the desired result for $c=64\times 6=384$.

    The same discussion than in the smooth non-convex proof regarding the condition $T\geq\tau$ applies here.
\end{proof}

\section{Markov chain SGD: results in the interpolation regime}\label{app:mcsgd_interpolation}

\subsection{Proof of Theorem~\ref{thm:mcsgd_interpolation}}

We begin by proving the following lemma.

\begin{lemma}\label{lem:noise-optim}
    For any $T\geq1$, we have:
    \begin{equation*}
        \esp{\NRM{\sum_{t<T}\nabla f_{v_t}(\xx^\star)}^2}\leq T \sigma_\star^2 + \sigma_{\star}^2\sum_{t<T}\dd_{\rm TV}(P_{v_0,\cdot}^t,\pi^\star) + 2\sigma_\star^2\sum_{s<t<T}\dd_{\rm TV}(t-s)\,,
    \end{equation*}
    where $\dd_{\rm TV}(r)=\sup\set{\dd_{\rm TV}((P^r)_{v,\cdot},\pi^\star)\,,v\in\cV}$ for $r\in\N$, so that:
    \begin{equation*}
        \esp{\NRM{\sum_{t<T}\nabla f_{v_t}(\xx^\star)}^2}\leq \sigma_\star^2\big(4\tau_\mix(1/4) + T(1+8\tau_\mix(1/4))\big)\,.
    \end{equation*}
\end{lemma}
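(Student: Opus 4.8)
The plan is to expand the squared norm into its diagonal and off-diagonal contributions and to exploit the single structural fact that makes everything cancel at the optimum: since $\xx^\star$ minimizes $f=\E_{v\sim\pi}[f_v]$ and each $f_v$ is smooth, we have $\sum_{v\in\cV}\pi_v\nabla f_v(\xx^\star)=\nabla f(\xx^\star)=0$. Writing $g_v=\nabla f_v(\xx^\star)$, Assumption~\ref{hyp:noise_opti} gives $\NRM{g_v}^2\le\sigma_\star^2$ uniformly in $v$, while the previous identity says the stationary average of the $g_v$ vanishes. I would then start from
\begin{equation*}
\esp{\NRM{\sum_{t<T}g_{v_t}}^2}=\sum_{t<T}\esp{\NRM{g_{v_t}}^2}+2\sum_{s<t<T}\esp{\langle g_{v_s},g_{v_t}\rangle}\,.
\end{equation*}

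For the diagonal, I would compare $\proba{v_t=v}=(P^t)_{v_0,v}$ to the stationary weight $\pi_v$: bounding the stationary average $\sum_v\pi_v\NRM{g_v}^2\le\sigma_\star^2$ produces the leading $T\sigma_\star^2$, while the deviation $\sum_v\big((P^t)_{v_0,v}-\pi_v\big)\NRM{g_v}^2$ is controlled by $\sigma_\star^2\,\dd_{\rm TV}(P^t_{v_0,\cdot},\pi)$ (up to the constant coming from $\sum_w|(P^t)_{v_0,w}-\pi_w|$), yielding the middle term after summing over $t<T$. For the cross terms with $s<t$, I condition on the past up to time $s$ and use the Markov property: $\esp{g_{v_t}\mid v_s=v}=\sum_w(P^{t-s})_{v,w}g_w=\sum_w\big((P^{t-s})_{v,w}-\pi_w\big)g_w$, where the second equality is precisely the vanishing-stationary-average cancellation. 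Cauchy--Schwarz then gives $|\langle g_{v_s},\esp{g_{v_t}\mid v_s}\rangle|\le\NRM{g_{v_s}}\,\sigma_\star\sum_w|(P^{t-s})_{v_s,w}-\pi_w|\lesssim\sigma_\star^2\,\dd_{\rm TV}(t-s)$, and taking expectations and summing over $s<t<T$ gives the last term, establishing the first inequality.

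For the second (explicit) inequality, the remaining task is purely to sum the total-variation distances. Here I would invoke the sub-multiplicativity of the maximal distance $\dd_{\rm TV}(r)=\sup_v\dd_{\rm TV}((P^r)_{v,\cdot},\pi)$, namely $\dd_{\rm TV}\big(k\,\tau_\mix(1/4)\big)\le 2^{-k}$, which follows from $\dd_{\rm TV}(\tau_\mix(1/4))\le 1/4$ together with the standard sub-multiplicative property. Partitioning $r\ge0$ into blocks of length $\tau_\mix(1/4)$ and summing a geometric series gives $\sum_{r\ge0}\dd_{\rm TV}(r)=\cO(\tau_\mix(1/4))$; this directly bounds the single sum $\sum_{t<T}\dd_{\rm TV}(P^t_{v_0,\cdot},\pi)$, while for the double sum I would use $\sum_{s<t<T}\dd_{\rm TV}(t-s)\le T\sum_{r\ge1}\dd_{\rm TV}(r)=\cO\big(T\tau_\mix(1/4)\big)$. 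Collecting the three contributions yields the claimed $\sigma_\star^2\big(4\tau_\mix(1/4)+T(1+8\tau_\mix(1/4))\big)$ bound.

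The main obstacle, and the only genuinely Markovian step, is the off-diagonal estimate: everything hinges on replacing $(P^{t-s})_{v,\cdot}$ by $\pi$ inside the conditional mean, which is legitimate \emph{only} because $\sum_w\pi_w g_w=0$, and on converting the resulting $\ell_1$-deviation uniformly into a total-variation distance via $\NRM{g_w}\le\sigma_\star$ for \emph{every} $w$. A secondary technical point is the clean block-summation of the $\dd_{\rm TV}(r)$ past the mixing time; the constants there are harmless but must be tracked to land exactly on the stated bound.
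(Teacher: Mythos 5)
Your proposal is correct and follows essentially the same route as the paper's proof: the same diagonal/off-diagonal expansion, the same key cancellation $\sum_v \pi_v \nabla f_v(\xx^\star)=\nabla f(\xx^\star)=0$ allowing the replacement of $(P^{t-s})_{v,\cdot}$ by its deviation from $\pi$ in the cross terms (your conditioning on $v_s$ via the Markov property is just the paper's explicit expansion $\sum_{v,w}(P^s)_{v_0,v}(P^{t-s})_{v,w}\GG_{\star,v}^\top\GG_{\star,w}$ written differently), and the same geometric summation of $\dd_{\rm TV}(r)$ via sub-multiplicativity past $\tau_\mix(1/4)$. The loose factor-of-$2$ bookkeeping you flag in converting the $\ell_1$ deviation to total variation is present in the paper's proof as well and is absorbed by the explicit constants $2$, $4$, and $8$ in the statement.
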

\begin{proof}
    We have:
    \begin{align*}
        \esp{\NRM{\sum_{t<T}\nabla f_{v_t}(\xx^\star)}^2}&=\esp{\left(\sum_{t<T}\nabla f_{v_t}(\xx^\star)\right)^\top\left(\sum_{t<T}\nabla f_{v_t}(\xx^\star)\right)}\\
        &= \sum_{t<T}\esp{\NRM{\nabla f_{v_t}(\xx^\star)}^2} + 2\sum_{s<t<T}\esp{\langle \nabla f_{v_s}(\xx^\star),\nabla f_{v_t}(\xx^\star)\rangle}\,.
    \end{align*}
    Denote $\GG_\star=(\nabla f_v(\xx^\star))_{v\in\cV}\in\R^{\cV\times d}$. For the first term above, 
    \begin{align*}
        \sum_{t<T}\esp{\NRM{\nabla f_{v_t}(\xx^\star)}^2}&=\sum_{t<T}\esp{\NRM{\GG_{\star,v_s}}^2}\\
        &=\sum_{t<T}\left(\sigma_\star^2+\sum_{v\in\cV}\big(\proba{v_t=v}-\pi_v\big)\NRM{\GG_{\star,v}}^2\right)\\
        &\leq T \sigma_\star^2 + \sigma_{\star}^2\sum_{t<T}\dd_{\rm TV}(P_{v_0,\cdot}^t,\pi^\star)\,.
    \end{align*}
    Finally,
    \begin{align*}
        \sum_{s<t<T}\esp{\langle \nabla f_{v_s}(\xx^\star),\nabla f_{v_t}(\xx^\star)\rangle}&=\sum_{s<t<T}\esp{\langle \GG_{\star,v_s},\GG_{\star,v_t}\rangle}\\
        &=\sum_{s<t<T}\sum_{v,w\in\cV} (P^s)_{v_0,v}(P^{t-s})_{v,w} \GG_{\star,v}^\top \GG_{\star,w}\\
        &=\sum_{s<t<T}\sum_{v,w\in\cV} (P^s)_{v_0,v}\big((P^{t-s})_{v,w}-\frac{1}{n}\big) \GG_{\star,v}^\top \GG_{\star,w}\\
        &\leq\sum_{s<t<T}\sum_{v,w\in\cV} (P^s)_{v_0,v}\big|(P^{t-s})_{v,w}-\frac{1}{n}\big|\sigma_\star^2\\
        &=\sigma_\star^2\sum_{s<t<T}\sum_{v\in\cV} (P^s)_{v_0,v}\sum_{w\in\cV}\big|(P^{t-s})_{v,w}-\frac{1}{n}\big|\\
        &\leq\sigma_\star^2\sum_{s<t<T}\dd_{\rm TV}(t-s)\,,
    \end{align*}
    where $\dd_{\rm TV}(r)=\sup\set{\dd_{\rm TV}((P^r)_{v,\cdot},\pi^\star)\,,v\in\cV}$ for $r\in\N$.

    We finally bound $\sum_{t<T}\dd_{\rm TV}(t)$. Using Chapter 4.5 of \citet{LevinPeresWilmer2006}, we have $\tau_\mix(\eps)\leq (\log_2(\eps^{-1})+1)\tau_\mix(1/4)$, so that for any $t\geq 0$, $d_\TV(t)\leq 2^{-t/\tau_\mix(1/4) + 1}$. Hence,
    \begin{equation*}
        \sum_{t<T}\dd_{\rm TV}(t)\leq  \frac{2}{1-2^{-1/\tau_\mix(1/4)}}\leq 4\tau_\mix(1/4)\,,
    \end{equation*}
    and 
    \begin{equation*}
        \sum_{s<t<T}\dd_{\rm TV}(t-s)\leq 4T\tau_\mix(1/4)\,,
    \end{equation*}
    concluding the proof.
\end{proof}

\begin{lemma}
    For any $\yy_t\in\R^d$ and $t\geq0$, denoting $\yy_{t+1}=\yy_t-\gamma\nabla f_{v_t}(\xx^\star)$, we have:
    \begin{equation*}
        \NRM{\xx_{t+1}-\yy_{t+1}}^2\leq (1-\gamma\mu)\NRM{\xx_t-\yy_t}^2 + \gamma L \NRM{\yy_t-\xx^\star}^2\,.
    \end{equation*}
\end{lemma}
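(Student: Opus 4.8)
The plan is to peel off the two updates, reduce the bound to an inequality that must hold for \emph{every} $\yy_t$, and then close it with co-coercivity of $f_{v_t}$. Writing $\gg:=\nabla f_{v_t}(\xx_t)-\nabla f_{v_t}(\xx^\star)$ and subtracting $\yy_{t+1}=\yy_t-\gamma\nabla f_{v_t}(\xx^\star)$ from $\xx_{t+1}=\xx_t-\gamma\nabla f_{v_t}(\xx_t)$ gives $\xx_{t+1}-\yy_{t+1}=(\xx_t-\yy_t)-\gamma\gg$, so that
\begin{equation*}
\NRM{\xx_{t+1}-\yy_{t+1}}^2=\NRM{\xx_t-\yy_t}^2-2\gamma\lr{\xx_t-\yy_t,\gg}+\gamma^2\NRM{\gg}^2 .
\end{equation*}
Hence the claimed inequality is equivalent, after dividing by $\gamma$ and setting $\pp:=\xx_t-\xx^\star$ and $\bb:=\yy_t-\xx^\star$ (so $\xx_t-\yy_t=\pp-\bb$), to
\begin{equation*}
(L-\mu)\NRM{\bb}^2+2\lr{\mu\pp-\gg,\bb}+\big(2\lr{\pp,\gg}-\mu\NRM{\pp}^2-\gamma\NRM{\gg}^2\big)\geq0 .
\end{equation*}

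The key observation is that $\yy_t$ is arbitrary, so $\bb$ ranges over all of $\R^d$ while $\pp,\gg$ stay fixed; since $L\geq\mu$ the left-hand side is a convex quadratic in $\bb$, and for $L>\mu$ (the case $L=\mu$ being immediate) it is nonnegative for every $\bb$ precisely when its discriminant is nonpositive. Working this out, the whole statement collapses to a single relation in $\pp$ and $\gg$ alone,
\begin{equation*}
\mu L\NRM{\pp}^2-2L\lr{\pp,\gg}+\big(1+\gamma(L-\mu)\big)\NRM{\gg}^2\leq0 .
\end{equation*}
To prove this I would invoke the two interpolation inequalities available for a $\mu$-strongly convex, $L$-smooth $f_{v_t}$: the co-coercivity bound $\lr{\pp,\gg}\geq\frac{\mu L}{\mu+L}\NRM{\pp}^2+\frac{1}{\mu+L}\NRM{\gg}^2$ and the smoothness bound $\NRM{\gg}\leq L\NRM{\pp}$. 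Substituting co-coercivity into the $-2L\lr{\pp,\gg}$ term rewrites the left-hand side as $-\tfrac{\mu L(L-\mu)}{\mu+L}\NRM{\pp}^2+(L-\mu)\big(\gamma-\tfrac{1}{\mu+L}\big)\NRM{\gg}^2$; when $\gamma\leq\tfrac{1}{\mu+L}$ both coefficients are nonpositive, and otherwise one bounds the now-nonnegative $\NRM{\gg}^2$-term by $L^2\NRM{\pp}^2$, after which the surviving $\NRM{\pp}^2$-coefficient equals $L(L-\mu)(\gamma L-1)\leq0$ because $\gamma L\leq1$. Either way the relation holds, which proves the lemma.

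The main obstacle is obtaining the \emph{tight} constant $\gamma L$ in front of $\NRM{\yy_t-\xx^\star}^2$. The naive route --- bounding $\NRM{\xx_t-\yy_t-\gamma(\nabla f_{v_t}(\xx_t)-\nabla f_{v_t}(\yy_t))}\leq(1-\gamma\mu)\NRM{\xx_t-\yy_t}$ by contractivity of the gradient step and then splitting off $\gamma(\nabla f_{v_t}(\yy_t)-\nabla f_{v_t}(\xx^\star))$ with a Peter--Paul / Young inequality --- loses a factor $\kappa=L/\mu$, producing $\gamma L\kappa$ rather than $\gamma L$. Avoiding this loss is exactly what forces the reduction to the single $\pp,\gg$ relation above and the \emph{simultaneous} use of co-coercivity and smoothness together with $\gamma L\leq1$; this interplay is the only delicate point, with the remaining steps being elementary bookkeeping.
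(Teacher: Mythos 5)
Your proof is correct, but it takes a genuinely different route from the paper's. The paper proves the lemma in three lines via the Bregman three-point identity, following the random-reshuffling analysis of Mishchenko et al.\ that it generalizes: writing $D_{f_t}(\yy,\xx)=f_t(\yy)-f_t(\xx)-\lr{\nabla f_t(\xx),\yy-\xx}$ with $f_t=f_{v_t}$, the cross term after expanding the square decomposes exactly as $-2\gamma\lr{\nabla f_t(\xx_t)-\nabla f_t(\xx^\star),\xx_t-\yy_t}=-2\gamma D_{f_t}(\yy_t,\xx_t)-2\gamma D_{f_t}(\xx_t,\xx^\star)+2\gamma D_{f_t}(\yy_t,\xx^\star)$; strong convexity gives $-2\gamma D_{f_t}(\yy_t,\xx_t)\leq-\gamma\mu\NRM{\xx_t-\yy_t}^2$, the middle divergence absorbs the quadratic term since $\gamma^2\NRM{\nabla f_t(\xx_t)-\nabla f_t(\xx^\star)}^2\leq 2\gamma^2 L D_{f_t}(\xx_t,\xx^\star)\leq 2\gamma D_{f_t}(\xx_t,\xx^\star)$ when $\gamma L\leq1$, and smoothness bounds $2\gamma D_{f_t}(\yy_t,\xx^\star)\leq\gamma L\NRM{\yy_t-\xx^\star}^2$. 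You instead exploit the universal quantification over $\yy_t$: viewing the claimed inequality as a convex quadratic in $\bb=\yy_t-\xx^\star$, the discriminant condition collapses it to the single relation $\mu L\NRM{\pp}^2-2L\lr{\pp,\gg}+\big(1+\gamma(L-\mu)\big)\NRM{\gg}^2\leq0$, which you then verify from the strongly-convex co-coercivity inequality together with $\NRM{\gg}\leq L\NRM{\pp}$ and $\gamma L\leq1$; I checked your two-case analysis (the coefficients $-\tfrac{\mu L(L-\mu)}{\mu+L}$ and $(L-\mu)(\gamma-\tfrac{1}{\mu+L})$, and the surviving coefficient $L(L-\mu)(\gamma L-1)$) and the degenerate case $L=\mu$, where $\gg=\mu\pp$ kills the linear term and the constant term equals $\mu(1-\gamma\mu)\NRM{\pp}^2\geq0$; all of it is sound. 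The paper's route buys brevity and portability: since it manipulates only Bregman divergences, it transfers to mirror-descent-type settings, whereas your quadratic-form manipulation of Euclidean norms would not. Your route buys sharpness information: it shows the lemma is \emph{equivalent} (for $L>\mu$, over all $\yy_t$) to the single $(\pp,\gg)$-relation, makes visible that equality is approached for quadratics with $\gg=L\pp$ at $\gamma=1/L$, and isolates precisely where the tight constant $\gamma L$ --- rather than the $\gamma L\kappa$ produced by the naive contraction-plus-Young splitting you mention --- and the stepsize condition $\gamma L\leq1$ enter. Both arguments use exactly the same hypotheses ($\mu$-strong convexity and $L$-smoothness of each $f_{v_t}$, plus $\gamma L\leq1$), so yours is a valid drop-in replacement.
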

\begin{proof}
    Denote $f_t=f_{v_t}(\cdot)$.
    We expand:
    \begin{align*}
        \NRM{\xx_{t+1}-\yy_{t+1}}^2&=\NRM{\xx_{t}-\yy_{t}}^2 - 2\gamma\langle \nabla f_t(\xx_t)-\nabla f_t(\xx^\star),\xx_t-\yy_t\rangle + \gamma^2\NRM{\nabla f_t(\xx_t)-\nabla f_t(\xx^\star)}^2\,.
    \end{align*}
    Using the \emph{three points equality} as \citet{mischenko2020randomreshuffling}, we have:
    \begin{equation*}
        - 2\gamma\langle \nabla f_t(\xx_t)-\nabla f_t(\xx^\star),\xx_t-\yy_t\rangle=-2\gamma D_{f_t}(\yy_t,\xx_t)-2\gamma D_{f_t}(\xx_t,\xx^\star)+2\gamma D_{f_t}(\yy_t,\xx^\star)\,.
    \end{equation*}
    First, $-2\gamma D_{f_t}(\yy_t,\xx_t)\leq -\gamma\mu\NRM{\xx_t-\yy_t}^2$ using strong convexity.
    Then, $-2\gamma D_{f_t}(\xx_t,\xx^\star)$ cancels the term $\gamma^2\NRM{\nabla f_t(\xx_t)-\nabla f_t(\xx^\star)}^2\leq 2\gamma^2LD_{f_t}(\xx_t,\xx^\star)$ for $\gamma\leq 1/L$, using smoothness of $f_t$.
    And finally, using smoothness again, $2\gamma D_{f_t}(\yy_t,\xx^\star)\leq \gamma L\NRM{\yy_t-\xx^\star}^2$, concluding the proof.
\end{proof}

\begin{proof}[Proof of Theorem~\ref{thm:mcsgd_interpolation}]
    Fix some $y_0\in\R^d$ and let $(\yy_t)$ be defined with the recursion
    \begin{equation*}
        \yy_{t+1}=\yy_t-\gamma\nabla f_{v_t}(\xx^\star,\xi_t)\,.
    \end{equation*} 
    Unrolling the previous Lemma, we have, for a fixed time horizon $T$:
    \begin{equation*}
        \NRM{\xx_T-\yy_T}^2\leq (1-\gamma\mu)^T\NRM{\xx_0-\yy_0}^2+\gamma L \sum_{t<T}(1-\gamma\mu)^{T-t}\NRM{\yy_t-\xx^\star}^2\,.
    \end{equation*}
    This is possible, since the descent lemma is deterministic, in the sense that no expectations are taken so far.
    Since we want control over the distance to the optimum, we wish to have $\yy_T=\xx^\star$, leading to:
    \begin{equation*}
        \yy_0=\xx^\star+\gamma\sum_{t<T}\nabla f_{v_t}(\xx^\star,\xi_t)\,,\quad \yy_s=\xx^\star+\gamma\sum_{s\leq t<T}\nabla f_{v_t}(\xx^\star,\xi_t)\,,\quad s<T\,.
    \end{equation*}
    We thus have:
    \begin{align*}
        \E\NRM{\xx_T-\xx^\star}^2&\leq 2(1-\gamma\mu)^T\left(\E\NRM{\xx_0-\xx^\star}^2+ \gamma^2\esp{\NRM{\sum_{s<T}\nabla f_{v_s}(\xx^\star)}^2}\right)\\
        &\quad+\gamma^3 L \sum_{t<T}(1-\gamma\mu)^{T-t}\esp{\NRM{\sum_{t\leq s<T}\nabla f_{v_s}(\xx^\star)}^2}\,.
    \end{align*}
    %With a mean-variance (conditioned on $v_0,\ldots,v_{T-1}$ first) decomposition, we have:
    %\begin{equation*}
     %   \esp{\NRM{\sum_{t\leq s<T}\nabla f_{v_s}(\xx^\star,\xi_s)}^2}=\esp{\NRM{\sum_{t\leq s<T}\nabla f_{v_s}(\xx^\star)}^2}+T\rho_\star^2\,.
    %\end{equation*}
    Using Lemma~\ref{lem:noise-optim}, we have for any $t\leq T$:
    \begin{equation*}
        \esp{\NRM{\sum_{t\leq s<T}\nabla f_{v_s}(\xx^\star)}^2}\leq CT\tau_\mix\sigma_\star^2\,,
    \end{equation*}
    for $C=13$.
    Hence,
    \begin{align*}
        \E\NRM{\xx_T-\xx^\star}^2&\leq 2(1-\gamma\mu)^T\left(\E\NRM{\xx_0-\xx^\star}^2+ \gamma^2TC\tau_\mix\sigma_\star^2\right)\\
        &\quad+\gamma^3 L \sum_{t<T}(1-\gamma\mu)^{T-t}(T-t)C\tau_\mix\sigma_\star^2\\
        &\leq 2(1-\gamma\mu)^T\NRM{\xx_0-\xx^\star}^2+\frac{3\gamma L}{\mu^2}C\tau_\mix\sigma_\star^2\,,
    \end{align*}
    using $\sum_{t<T}(1-x)^tt\leq 1/x^2$ and $(1-z)^xx\leq \frac{1}{eu}\leq 1/(2u)$.
    Finally, for a stepsize choice of 
    \begin{equation*}
        \gamma=\min\left(\frac{1}{L}\,,\, \frac{1}{T\mu} \ln\left(T\frac{\NRM{\xx_0-\xx^\star}^2}{\frac{3\gamma L}{\mu^2}C\tau_\mix\sigma_\star^2}\right)\right)\,,
    \end{equation*}
    we obtain:
    \begin{equation*}
        \esp{\NRM{\xx_T-\xx^\star}}\leq 2e^{-\frac{\mu T}{L}}\NRM{\xx_0-\xx^\star}^2 + \cO\left(\frac{L\tau_\mix\sigma_\star^2}{\mu^3T}\right)\,.
    \end{equation*}
\end{proof}

\subsection{Proof of Proposition~\ref{prop:lower_mcsgd}}

\begin{proof}
Consider the graph $G$ on the set of nodes $\cV=\set{0,1}$, with probability transitions $p_{01}=p_{10}=p$ and $p_{00}=p_{11}=1-p$, for some small $p\in(0,1)$.
The relaxation time $\tau_\mix(1/4)$ of this graph scales as $1/p$.

Consider now $f_0(\xx)=\frac{1}{2}(\xx-1)^2$ and $f_1(\xx)=\frac{1}{2}(\xx+1)^2$ for $\xx\in\R$, so that $\xx^\star=0$.
For $(v_t)$ a Markov chain with the given transition probabilities, started at $v_0$ following the uniform (stationary) distribution on $\cV$, let $\xx_t$ be generated with MC-SGD: $\xx_{t+1}=\xx_t-\gamma \nabla f_{v_t}(\xx_t)$ and $\xx_0=1$, \emph{i.e.},
\begin{equation*}
    \xx_T= (1-\gamma)^T -\gamma \sum_{t<T}(1-\gamma)^{T-t-1}\zeta_t\,,
\end{equation*}
where $\zeta_t\in\set{-1,1}$ takes value $1$ if $v_t=0$ and value $-1$ if $v_t=0$.
We have:
\begin{equation*}
    \esp{(\xx_T-\xx^\star)^2}= (1-\gamma)^{2T}+\gamma^2\sum_{s<t<T}(1-\gamma)^{2T-t-s-2}\esp{\zeta_s\zeta_t}\,.
\end{equation*}
We compute this second term, and show that it is non-negative for $p\leq 1/2$ and of order $\frac{\gamma}{4p}$, so that to reach a given precision $\eps>0$, is required $\gamma\leq 4p\eps$ and thus to make the first term small, $T$ must verify $T=\Omega(1/(2p\eps))$, concluding our reasonning.

For $s<t$, we have $\esp{\zeta_s\zeta_t}=2\proba{v_{t-s}=v_0|v_0\sim\pi^\star}-1$/ Denoting $z_k=\proba{v_{k}=v_0|v_0\sim\pi^\star}$, we have $z_{k+1}=pz_k+(1-p)(1-z_k)$ and $z_0=1$, so that $z_k=\frac{1}{2}(1+(1-2p)^k)$ for $k\geq0$.
This leads to:
\begin{align*}
    \gamma^2\sum_{s<t<T}(1-\gamma)^{2T-t-s-2}\esp{\zeta_s\zeta_t}&=\gamma^2\frac{(1-\gamma)(1-2p)}{1-(1-\gamma)(1-2p)}\\
    &\times \left( \frac{1-(1-\gamma)^{2T}}{1-(1-\gamma)^2}-(1-2p)\frac{(1-\gamma)^T-(1-2p)^T}{2p-\gamma}(1-\gamma)^T\right)
\end{align*}
For $\eps\to0$, in order to have $\esp{(\xx_T-\xx^\star)^2}\leq\eps$, is required $(1-\gamma)^T\leq \eps$ so that $\gamma T\to \infty$. Under $\gamma T\to \infty$, we have
\begin{equation*}
    \gamma^2\sum_{s<t<T}(1-\gamma)^{2T-t-s-2}\esp{\zeta_s\zeta_t}\sim \frac{\gamma}{4p}\,.
\end{equation*}
Finally, to reach precision $\eps$, this quantity needs to be upper-bounded by $\eps(1+o(1))$, so that $\gamma \leq 4p\eps^{-1}(1+o(1))$ is necessary.
Plugging this in $(1-\gamma)^{2T}\leq \eps$ yields $T=\Tilde\Omega(p\eps^{-1})$, the desired result.
\end{proof}

\section{With local noise: proof of Theorem~\ref{thm:local_noise}}

The proof follows the exact same steps as the proof of Theorem~\ref{thm:mcsgd_interpolation}.

\begin{proof}
    First, note that, using Lemma~15 in \citet*{stich2021errorfeedback}, we have:
    \begin{equation*}
        \esp{\NRM{\sum_{t<T} \nabla_\xx F_{v_t} (\xx^\star,\xi_t)  }^2} \leq 2 \esp{\NRM{\sum_{t<T} \nabla f_{v_t} (\xx^\star)  }^2} +2T\sigma_{\rm local}^2\,.
    \end{equation*}

    We then have the following lemma, proved exactly as in the previous section.
    \begin{lemma}
        For any $\yy_t\in\R^d$ and $t\geq0$, denoting $\yy_{t+1}=\yy_t-\gamma\nabla_\xx F_{v_t}(\xx^\star,\xi_t)$, we have:
        \begin{equation*}
            \NRM{\xx_{t+1}-\yy_{t+1}}^2\leq (1-\gamma\mu)\NRM{\xx_t-\yy_t}^2 + \gamma L \NRM{\yy_t-\xx^\star}^2\,.
        \end{equation*}
    \end{lemma}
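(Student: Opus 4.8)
The plan is to reproduce the argument of the analogous lemma in the interpolation regime verbatim, with the per-sample loss $F_t\eqdef F_{v_t}(\cdot,\xi_t)$ playing the role that $f_{v_t}$ had there. The crucial structural observation is that both recursions use the gradient of the \emph{same} realized function $F_{v_t}(\cdot,\xi_t)$ --- namely $\gg_t=\nabla_\xx F_{v_t}(\xx_t,\xi_t)$ for $\xx_{t+1}$ and $\nabla_\xx F_{v_t}(\xx^\star,\xi_t)$ for $\yy_{t+1}$ --- so the statement is a pathwise (deterministic, no-expectation) inequality, and the hypothesis that each $F_v(\cdot,\xi)$ is $\mu$-strongly convex and $L$-smooth is exactly what the argument consumes.

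First I would expand the square, writing $\xx_{t+1}-\yy_{t+1}=(\xx_t-\yy_t)-\gamma(\nabla F_t(\xx_t)-\nabla F_t(\xx^\star))$, to obtain
\begin{equation*}
\NRM{\xx_{t+1}-\yy_{t+1}}^2=\NRM{\xx_t-\yy_t}^2-2\gamma\langle \nabla F_t(\xx_t)-\nabla F_t(\xx^\star),\xx_t-\yy_t\rangle+\gamma^2\NRM{\nabla F_t(\xx_t)-\nabla F_t(\xx^\star)}^2\,.
\end{equation*}
Next I would rewrite the cross term using the three-points identity for the Bregman divergence $D_{F_t}$, giving
\begin{equation*}
-\langle \nabla F_t(\xx_t)-\nabla F_t(\xx^\star),\xx_t-\yy_t\rangle=-D_{F_t}(\yy_t,\xx_t)-D_{F_t}(\xx_t,\xx^\star)+D_{F_t}(\yy_t,\xx^\star)\,.
\end{equation*}

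The three estimates then fall out of the regularity of $F_t$: strong convexity gives $D_{F_t}(\yy_t,\xx_t)\geq\frac{\mu}{2}\NRM{\xx_t-\yy_t}^2$, hence $-2\gamma D_{F_t}(\yy_t,\xx_t)\leq-\gamma\mu\NRM{\xx_t-\yy_t}^2$; smoothness gives $\NRM{\nabla F_t(\xx_t)-\nabla F_t(\xx^\star)}^2\leq 2L D_{F_t}(\xx_t,\xx^\star)$, so that the squared-gradient term is absorbed by $-2\gamma D_{F_t}(\xx_t,\xx^\star)$ as soon as $\gamma\leq 1/L$, leaving a nonpositive residual $-2\gamma(1-\gamma L)D_{F_t}(\xx_t,\xx^\star)\leq 0$; and smoothness again gives $D_{F_t}(\yy_t,\xx^\star)\leq\frac{L}{2}\NRM{\yy_t-\xx^\star}^2$, so $2\gamma D_{F_t}(\yy_t,\xx^\star)\leq\gamma L\NRM{\yy_t-\xx^\star}^2$. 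Collecting the three bounds yields exactly $\NRM{\xx_{t+1}-\yy_{t+1}}^2\leq(1-\gamma\mu)\NRM{\xx_t-\yy_t}^2+\gamma L\NRM{\yy_t-\xx^\star}^2$.

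There is essentially no obstacle at the level of this lemma; the only subtlety worth flagging --- and the reason the bound transfers unchanged from the noiseless case --- is that one must keep $\xi_t$ fixed throughout, using the convexity and smoothness of the \emph{realized} map $F_{v_t}(\cdot,\xi_t)$ rather than of the average $f_{v_t}=\esp{F_{v_t}(\cdot,\xi_{v_t})}$. The randomness in $\xi_t$ (and in $v_t$) is only introduced afterwards, exactly as in the proof of Theorem~\ref{thm:mcsgd_interpolation}: one unrolls this one-step contraction, chooses the shadow sequence $(\yy_t)$ so that $\yy_T=\xx^\star$, takes expectations, and invokes the variance bound on $\esp{\NRM{\sum_{t<T}\nabla_\xx F_{v_t}(\xx^\star,\xi_t)}^2}$ stated at the start of this section together with Lemma~\ref{lem:noise-optim}, which is where the additional $\sigma_{\rm local}^2$ term and the $\tau_\mix$ factor on $\sigma_\star^2$ enter.
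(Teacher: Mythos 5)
Your proof is correct and is essentially identical to the paper's argument: the paper proves this lemma ``exactly as in the previous section,'' i.e.\ via the same expansion of $\NRM{\xx_{t+1}-\yy_{t+1}}^2$, the three-points identity for the Bregman divergence of the realized loss $F_{v_t}(\cdot,\xi_t)$, and the same three bounds from $\mu$-strong convexity and $L$-smoothness (with the absorption of the squared-gradient term requiring $\gamma\leq 1/L$, as you note). Your flagged subtlety --- that the inequality is pathwise, holding for the fixed realization $\xi_t$ before any expectations are taken --- is precisely the point the paper relies on as well.
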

    This leads to:
    \begin{equation*}
        \NRM{\xx_T-\yy_T}^2\leq (1-\gamma\mu)^T\NRM{\xx_0-\yy_0}^2+\gamma L \sum_{t<T}(1-\gamma\mu)^{T-t}\NRM{\yy_t-\xx^\star}^2\,,
    \end{equation*}
    for
    \begin{equation*}
        \yy_0=\xx^\star+\gamma\sum_{t<T}\nabla_\xx F_{v_t}(\xx^\star,\xi_t)\,,\quad \yy_s=\xx^\star+\gamma\sum_{s\leq t<T}\nabla_\xx F_{v_t}(\xx^\star,\xi_t)\,,\quad s<T\,.
    \end{equation*}
    We thus have:
    \begin{align*}
        \E\NRM{\xx_T-\xx^\star}^2&\leq 2(1-\gamma\mu)^T\left(\E\NRM{\xx_0-\xx^\star}^2+ \gamma^2\esp{\NRM{\sum_{s<T}\nabla_\xx F_{v_s}(\xx^\star,\xi_s)}^2}\right)\\
        &\quad+\gamma^3 L \sum_{t<T}(1-\gamma\mu)^{T-t}\esp{\NRM{\sum_{t\leq s<T}\nabla_\xx F_{v_s}(\xx^\star,\xi_s)}^2}\,.
    \end{align*}
    To conclude, we use the first inequality of this proof, and Lemma~\ref{lem:noise-optim}, and proceed as in the proof of Theorem~\ref{thm:mcsgd_interpolation} and Corollary~\ref{thm:mcsgd_interpolation_step}.
\end{proof}

\section{MC-SAG: proof of Theorem~\ref{thm:mcSAG_varying}}\label{app:mcSAG}

\subsection{With perfect initialization: Theorem~\ref{thm:mcSAG_varying}.\ref{thm:mcSAG_varying_init}}

We begin classically by proving a descent lemma. This lemma is deterministic, in the sense that not means $\E$ are present, and it therefore does not use the Markovian properties of the Markov chain.
MC-SAG uses biased gradients, even in the case where $v_t$ are \emph{i.i.d.}, since the algorithm SAG \citep{SAG} is inherently biased (making it unbiased leads to the SAGA iterations \citep{saga}).

Let $\GG_t=\bar\hh_{t+1}$ for $t\geq0$, so that $\xx_{t+1}=\xx_t-\gamma_t\GG_t$.
We recall that for $v\in\cV$ and $t\geq0$, $p_v(t)$ is the next time (strictly) the chain hits node $v$, while $d_v(t)$ is either the last time the chain was at the state $v$ (if that happened), or $0$ in $v$ has not yet been visited.

\begin{lemma}\label{lem:descent_sag}
	Assume that $f$ is $L$-smooth. Then, for any $t\geq0$, we have:
	\begin{align*}
		f(\xx_{t+1})-f(\xx_t)\leq -\frac{\gamma_t}{2}\NRM{\nabla f(\xx_t)}^2 -\frac{\gamma_t}{4}\NRM{\GG_t}^2 + \frac{\gamma_t L^2}{2n} \sum_{v\in\cV}\NRM{\sum_{s=d_{v}(t)}^{t-1}\gamma_s\GG_s}^2\,.
	\end{align*} 
\end{lemma}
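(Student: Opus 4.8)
The plan is to derive this purely deterministic, per-step inequality from $L$-smoothness of $f$ together with the $L$-smoothness of each $f_v$, treating $\GG_t$ as a biased (stale) surrogate for $\nabla f(\xx_t)$; no expectation or Markov structure enters at this stage. First I would apply smoothness of $f$ along the update $\xx_{t+1}=\xx_t-\gamma_t\GG_t$ to get
\[
f(\xx_{t+1})-f(\xx_t)\leq -\gamma_t\langle\nabla f(\xx_t),\GG_t\rangle+\frac{L\gamma_t^2}{2}\NRM{\GG_t}^2,
\]
and then use the polarization identity $\langle a,b\rangle=\tfrac12(\NRM{a}^2+\NRM{b}^2-\NRM{a-b}^2)$ with $a=\nabla f(\xx_t)$ and $b=\GG_t$. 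This yields the two negative quadratics $-\tfrac{\gamma_t}{2}\NRM{\nabla f(\xx_t)}^2$ and $-\tfrac{\gamma_t}{2}\NRM{\GG_t}^2$, plus a positive bias term $\tfrac{\gamma_t}{2}\NRM{\nabla f(\xx_t)-\GG_t}^2$ that must be absorbed into the last term of the claim.

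The key step is bounding this bias. Using the summarized recursion \eqref{eq:mcSAG}, I write $\nabla f(\xx_t)=\frac1n\sum_{v}\nabla f_v(\xx_t)$ and $\GG_t=\frac1n\sum_v\nabla f_v(\xx_{d_v(t)})$, so their difference is an average of the per-node discrepancies $\nabla f_v(\xx_t)-\nabla f_v(\xx_{d_v(t)})$. I would apply convexity of $\NRM{\cdot}^2$ (Jensen) to push the average outside the norm, then $L$-smoothness of each $f_v$ to obtain $\NRM{\nabla f_v(\xx_t)-\nabla f_v(\xx_{d_v(t)})}^2\leq L^2\NRM{\xx_t-\xx_{d_v(t)}}^2$. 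Finally, unrolling the iteration over $[d_v(t),t)$ gives the exact telescoping identity $\xx_t-\xx_{d_v(t)}=-\sum_{s=d_v(t)}^{t-1}\gamma_s\GG_s$, whence the bias term is bounded by $\tfrac{\gamma_t L^2}{2n}\sum_v\NRM{\sum_{s=d_v(t)}^{t-1}\gamma_s\GG_s}^2$, matching the claim precisely.

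It then remains to absorb the leftover $\GG_t$ contribution: collecting coefficients gives a term $-\tfrac{\gamma_t}{2}(1-L\gamma_t)\NRM{\GG_t}^2$, and to upgrade this to $-\tfrac{\gamma_t}{4}\NRM{\GG_t}^2$ I only need $L\gamma_t\leq\tfrac12$. This holds under the adaptive stepsize \eqref{eq:stepsizes}, since $\gamma_t=\bigl(2L(\tau_\hit+\max_{v}(t-d_v(t)))\bigr)^{-1}\leq\tfrac{1}{2L\tau_\hit}\leq\tfrac{1}{2L}$ using $\tau_\hit\geq1$. Combining the three bounds yields exactly the stated inequality.

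I do not expect any genuine obstacle: the statement is a deterministic descent bound with the Markov-chain ingredients deferred to the later summation (where the staleness terms $\sum_v\NRM{\sum_{s=d_v(t)}^{t-1}\gamma_s\GG_s}^2$ will be controlled via the cover/hitting-time machinery of \Cref{lem:stepsizes_bound}). The only points requiring mild care are correctly identifying $\GG_t$ with the stale average $\frac1n\sum_v\nabla f_v(\xx_{d_v(t)})$, and verifying that the condition $\gamma_t\le 1/(2L)$ needed to make $1-L\gamma_t\ge 1/2$ indeed follows from the definition of the stepsizes.
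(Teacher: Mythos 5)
Your proposal is correct and follows essentially the same route as the paper's own proof: smoothness of $f$ along the update, the polarization identity $\langle \nabla f(\xx_t),\GG_t\rangle=\tfrac12(\NRM{\nabla f(\xx_t)}^2+\NRM{\GG_t}^2-\NRM{\nabla f(\xx_t)-\GG_t}^2)$, Jensen to push the average over $v\in\cV$ outside the squared norm, per-node $L$-smoothness, and the telescoping identity $\xx_t-\xx_{d_v(t)}=-\sum_{s=d_v(t)}^{t-1}\gamma_s\GG_s$. Your only addition is the explicit check that the adaptive stepsize \eqref{eq:stepsizes} guarantees $\gamma_t\leq 1/(2L)$, a condition the paper invokes inside the proof (``as long as $\gamma_t\leq 1/(2L)$'') without verifying it there; this is a welcome clarification rather than a deviation.
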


\begin{proof}
    For $t\geq 0$ and $v\in\cV$, let $p_v(t)=\inf\set{s>t|v_s=v}$  and $d_v(t)=\sup\set{s\leq t|v_s=v}$ be the next and the last previous iterates for which $v_t=v$ ($d_v(t)=0$ by convention, if $v$ has not yet been visited).
	%Denote $\Bar\GG_t=\nabla f_{v_t} (\xx_t) -\nabla f_{v_t}(\xx_{d_{v_t}(t)}) + \frac{1}{n}\sum_{v\in\cV}\nabla f_v(\xx_{d_v(t)}) $, so that $\esp{\GG_t-\Bar\GG_t}=0$. 
	Denote $F_t=\esp{f(\xx_t)-f(\xx^\star)}$. We have, using smoothness:
	\begin{equation*}
		f(\xx_{t+1})\leq f(\xx_t) -\gamma_t\langle \nabla f(\xx_t), \GG_t\rangle + \frac{\gamma_t^2L}{2}\NRM{\GG_t}^2\\
	\end{equation*}
	%For $s\geq0$, denote $\zz_s=\gg_{v_s}^s-\nabla f_{v_s}(\xx_s)$, so that $\GG_t-\Bar\GG_t=\zz_t-\zz_{d_{v_t}(t)}+\beta\sum_{v\in\cV}\zz_{d_v(t)}$.
	Together with $\langle \nabla f(\xx_t), \GG_t\rangle= \frac{1}{2}(\NRM{\nabla f(\xx_t)}^2+\NRM{\GG_t}^2-\NRM{\nabla f(\xx_t)-\GG_t}^2)$, we obtain:
	\begin{align*}
		f(\xx_{t+1})&\leq  f(\xx_t) -\frac{\gamma_t}{2}\NRM{\nabla f(\xx_t)}^2+\NRM{\GG_t}^2-\NRM{\nabla f(\xx_t)-\GG_t}^2+ \frac{\gamma_t^2L}{2}\NRM{\GG_t}^2\\
		&\leq f(\xx_t)-\frac{\gamma_t}{2}\NRM{\nabla f(\xx_t)}^2-\frac{\gamma_t}{4}\NRM{\GG_t}^2+\frac{\gamma_t}{2}\NRM{\nabla f(\xx_t)-\GG_t}^2\,,
	\end{align*}
	as long as $\gamma_t\leq 1/(2L)$.
	We thus need to upperbound the bias $\NRM{\nabla f(\xx_t)-\GG_t}^2$. We have:
	%\mathieu{ATTENTION AUX TERMES AVANT LE PREMIER cover !! OU ALORS PRENDRE $\GG_0=\nabla f(\xx_0)$.}
	\begin{align*}
		\NRM{\nabla f(\xx_t)-\GG_t}^2&= \NRM{\frac{1}{n} \sum_{v\in\cV} \nabla f_v(\xx_{d_v(t)})-\nabla f_v(\xx_t)}^2\\
		&\leq \frac{1}{n} \sum_{v\in\cV}\NRM{\nabla f_v(\xx_{d_v(t)})-\nabla f_v(\xx_t)}^2\,.
	\end{align*}
	Fix some $v$ in $\cV$. We have $\NRM{\nabla f_v(\xx_t)-\nabla f_v(\xx_{d_{v}(t)})}^2\leq L^2\NRM{\sum_{s=d_{v}(t)}^{t-1}\gamma_s\GG_s}^2$, leading to:
	\begin{align*}
		f(\xx_{t+1})-f(\xx_t)&\leq -\frac{\gamma_t}{2}\NRM{\nabla f(\xx_t)}^2 -\frac{\gamma_t}{4}\NRM{\GG_t}^2 + \frac{\gamma_t L^2}{2n} \sum_{v\in\cV}\NRM{\sum_{s=d_{v}(t)}^{t-1}\gamma_s\GG_s}^2\,.
	\end{align*} 
\end{proof}

We now proceed with the proof of Theorem~\ref{thm:mcSAG_varying}.\ref{thm:mcSAG_varying_init}.

\begin{proof}[Proof of Theorem~\ref{thm:mcSAG_varying}.\ref{thm:mcSAG_varying_init}]
	We begin with
	\begin{align*}
		f(\xx_{t+1})-f(\xx_t)&\leq -\frac{\gamma_t}{2}\NRM{\nabla f(\xx_t)}^2 -\frac{\gamma_t}{4}\NRM{\GG_t}^2 +\frac{\gamma_t L^2}{2n} \sum_{v\in\cV}\NRM{\sum_{s=d_{v}(t)}^{t-1}\gamma_s\GG_s}^2\,,
	\end{align*} 
	as a starting point. For $v\in\cV$, 
	\begin{align*}
		\gamma_t\NRM{\sum_{s=d_{v}(t)}^{t-1}L^2\gamma_s\GG_s}^2&\leq \sum_{s=d_{v}(t)}^{t-1}(t-d_v(t))L^2\gamma_t\gamma_s^2\NRM{\GG_s}^2\\
		&\leq \sum_{s=d_{v}(t)}^{t-1}L\gamma_s^2\NRM{\GG_s}^2\,,
	\end{align*}
	since $\gamma_t\leq 1/(L(t-d_v(t)))$. Summing for $t<T$, we obtain:
	\begin{align*}
		\sum_{t<T}\frac{\gamma_t}{2}\NRM{\nabla f(\xx_t)}^2\leq F_0 - \sum_{t<T}\frac{\gamma_t}{2}\NRM{\GG_t}^2 + \sum_{t<T} \frac{1}{2n}\sum_{v\in\cV}\sum_{s=d_{v}(t)}^{t-1}L\gamma_s^2\NRM{\GG_s}^2\,.
	\end{align*}
	Then,
	\begin{align*}
		\sum_{t<T}\sum_{s=d_{v}(t)}^{t-1}L\gamma_s^2\NRM{\GG_s}^2 = \sum_{s<T} \NRM{\GG_s}^2L\gamma_s^2 (p_v(s)-s)\,.
	\end{align*}
	For $\gamma_s\leq 1/(2L\tau_{\rm hit})$, we have $\esp{\NRM{\GG_s}^2\gamma_s^2(p_v(s)-s)}\leq \frac{1}{2}\esp{\NRM{\GG_s}^2\gamma_s}$, so that:
	\begin{align*}
		\sum_{t<T}\frac{\gamma_t}{2}\NRM{\nabla f(\xx_t)}^2\leq F_0 + \sum_{t<T}\left(-\frac{\gamma_t}{2}\NRM{\GG_t}^2 + K_t\right)  \,,
	\end{align*}
	where $K_t=\frac{1}{4n}\sum_{v\in\cV}\NRM{\GG_t}^2L\gamma_t^2 (p_v(t)-t)$ verifies $\esp{K_t|\cF_t}\leq \NRM{\GG_t}^2\gamma_t/4$ since $\gamma_t\leq 1/(2\tau_{\rm hit})$, where $\cF_t$ is the filtration up to time $t$:
    \begin{align*}
        \esp{K_t|\cF_t}&=\esp{\frac{1}{4n}\sum_{v\in\cV}\NRM{\GG_t}^2L\gamma_t^2 (p_v(t)-t)|\cF_t}\\
        &=\frac{1}{4n}\sum_{v\in\cV}\NRM{\GG_t}^2L\gamma_t^2 \esp{(p_v(t)-t)|\cF_t} \quad \big(\GG_t\,,\,\gamma_t\text{ are $\cF_t$-measurable}\big)\\
        &\leq \frac{1}{4n}\sum_{v\in\cV}\NRM{\GG_t}^2L\gamma_t^2 \tau_\hit \quad \big(\text{since }\esp{(p_v(t)-t)|\cF_t}=\esp{(p_v(t)-t)|v_t}\leq \tau_\hit\big)\\
        &\leq \frac{1}{8}\gamma_t \NRM{\GG_t}^2 \quad \big(\text{since } \gamma_t\leq 1/(2\tau_\hit)\big)\,.
    \end{align*}
	Finally,
	\begin{align*}
		\esp{\min_{t<T}\NRM{\nabla f(\xx_t)}^2}\leq \esp{\frac{2F_0}{\sum_{t<T}\gamma_t}} + \esp{\frac{\sum_{t<T}\left(-\frac{\gamma_t}{2}\NRM{\GG_t}^2 + K_t\right)}{\sum_{t<T}\gamma_t}}  \,.
	\end{align*}
	Using Lemma~\ref{lem:frac_rdm} and the above bound on $\esp{K_t|\cF_t}$, that last term is non-positive. Using Jensen inequality, we have:
	\begin{align*}
		\esp{\min_{t<T}\NRM{\nabla f(\xx_t)}^2}&\leq \frac{2F_0}{T^2}\sum_{t<T}\esp{\gamma_t^{-1}}\,.
	\end{align*}
	Since $\gamma_t^{-1}=2L(\tau_{\rm hit} + \sup_{v\in\cV} (t-d_v(t)))$, we have $\sum_{t<T}\esp{\gamma_t^{-1}}\leq 2LT(\tau_{\rm hit} + \tau_{\rm cov})$ using Lemma~\ref{lem:stepsizes_bound}, concluding the proof.
\end{proof}

\subsection{With arbitrary initialization: proof of Theorem~\ref{thm:mcSAG_varying}.\ref{thm:mcSAG_varying_arbitrary}}

We no longer assume that $\GG_0=\nabla f(\xx_0)$ or that $\hh_v=\nabla f_v(\xx_0)$: $\hh_v$ are arbitrary and $\GG_0=\frac{1}{n}\sum_{v\in\cV}\hh_v$.
In that case, we have the following descent lemma.

\begin{lemma}\label{lem:descent_sag_2}
	Assume that $f$ is $L$-smooth. 
    Let 
    \begin{equation*}
        \tilde\tau_\cov=\inf\set{t\geq 1\quad \text{such that}\quad \set{v_0,\ldots,v_{t-1}}=\cV}\,.
    \end{equation*}
    Then, for any $t\geq0$, we have:
	\begin{align*}
		f(\xx_{t+1})-f(\xx_t)&\leq -\frac{\gamma_t}{2}\NRM{\nabla f(\xx_t)}^2 -\frac{\gamma_t}{4}\NRM{\GG_t}^2 + \frac{\gamma_t L^2}{n} \sum_{v\in\cV}\NRM{\sum_{s=d_{v}(t)}^{t-1}\gamma_s\GG_s}^2 + \frac{\gamma_t\one_{t<\tilde\tau_\cov}}{n}\sum_{v\in\cV}\NRM{\nabla f_v(\xx_0)-\hh_v}^2\,.
	\end{align*} 
\end{lemma}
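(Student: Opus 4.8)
The plan is to follow the proof of Lemma~\ref{lem:descent_sag} essentially verbatim up to the point where the bias is bounded, the only new ingredient being a careful accounting of the memory variables $\hh_v$ that have not yet been refreshed. First I would reproduce the opening of that argument: by $L$-smoothness and the polarization identity $\langle \nabla f(\xx_t),\GG_t\rangle = \frac12\big(\NRM{\nabla f(\xx_t)}^2 + \NRM{\GG_t}^2 - \NRM{\nabla f(\xx_t)-\GG_t}^2\big)$, for $\gamma_t \leq 1/(2L)$ one obtains
\begin{equation*}
    f(\xx_{t+1}) - f(\xx_t) \leq -\frac{\gamma_t}{2}\NRM{\nabla f(\xx_t)}^2 - \frac{\gamma_t}{4}\NRM{\GG_t}^2 + \frac{\gamma_t}{2}\NRM{\nabla f(\xx_t)-\GG_t}^2\,,
\end{equation*}
so the whole task reduces to bounding the bias $\NRM{\nabla f(\xx_t)-\GG_t}^2$.

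The key structural observation is the exact form of $\GG_t=\bar\hh_{t+1}$ under arbitrary initialization. Tracking the update $\bar\hh_{t+1}=\bar\hh_t + \frac1n(\nabla f_{v_t}(\xx_t)-\hh_{v_t})$ together with the assignment $\hh_{v_t}\leftarrow \nabla f_{v_t}(\xx_t)$, one sees that $\GG_t$ is the average over $v\in\cV$ of the most recent stored gradients: a node already visited by time $t$ contributes $\nabla f_v(\xx_{d_v(t)})$, while each not-yet-visited node still contributes its initial value $\hh_v$ (recall $d_v(t)=0$ by convention in that case). I would therefore decompose
\begin{equation*}
    \nabla f(\xx_t)-\GG_t = \frac1n\sum_{v\in\cV}\big(\nabla f_v(\xx_t)-\nabla f_v(\xx_{d_v(t)})\big) + \frac1n\sum_{v\ \mathrm{unvisited}}\big(\nabla f_v(\xx_0)-\hh_v\big)\,,
\end{equation*}
where the second sum is empty precisely when $t\geq\tilde\tau_\cov$, which is what produces the indicator $\one_{t<\tilde\tau_\cov}$.

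From here the estimate is routine. Applying $\NRM{a+b}^2\leq 2\NRM{a}^2+2\NRM{b}^2$, then Jensen's inequality to each averaged sum, and finally $L$-smoothness with $\xx_t-\xx_{d_v(t)}=-\sum_{s=d_v(t)}^{t-1}\gamma_s\GG_s$ on the first piece, yields $\frac{2L^2}{n}\sum_{v}\NRM{\sum_{s=d_v(t)}^{t-1}\gamma_s\GG_s}^2$ for the delayed part; the extra factor $2$ relative to Lemma~\ref{lem:descent_sag} is exactly what upgrades $\frac{\gamma_t L^2}{2n}$ to $\frac{\gamma_t L^2}{n}$. For the initialization part, Jensen over the subset of unvisited nodes produces a prefactor equal to (number of unvisited nodes)$/n^2\leq 1/n$, so it is controlled by $\frac1n\sum_{v\in\cV}\NRM{\nabla f_v(\xx_0)-\hh_v}^2$. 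Multiplying by $\gamma_t/2$ and substituting both bounds into the descent inequality gives the claim. I expect the only genuinely delicate point to be the bookkeeping in the middle step: correctly verifying that an unvisited node's stored gradient is frozen at $\hh_v$ and that this extra contribution disappears exactly at the cover time $\tilde\tau_\cov$; once the decomposition is pinned down, the remaining inequalities are the same convexity-and-smoothness manipulations as in the perfect-initialization case.
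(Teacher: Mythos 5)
Your proposal is correct and follows essentially the same route as the paper's proof: the same smoothness-plus-polarization opening, then a bound on the bias $\NRM{\nabla f(\xx_t)-\GG_t}^2$ that separates the delayed-gradient contribution from the frozen initial values $\hh_v$ of unvisited nodes, with the indicator $\one_{t<\tilde\tau_\cov}$ arising exactly as you say because the unvisited set empties at the cover time. The only cosmetic difference is bookkeeping order --- the paper applies Jensen first and then $\NRM{a+b}^2\leq 2\NRM{a}^2+2\NRM{b}^2$ per node via the stored values $\hh_v^t$, whereas you split the vector sum before applying Jensen to each piece --- and both yield the identical constants $\frac{\gamma_t L^2}{n}$ and $\frac{\gamma_t}{n}$.
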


\begin{proof}
As in Lemma~\ref{lem:descent_sag}, we have
	\begin{align*}
		f(\xx_{t+1})\leq f(\xx_t)-\frac{\gamma_t}{2}\NRM{\nabla f(\xx_t)}^2-\frac{\gamma_t}{4}\NRM{\GG_t}^2+\frac{\gamma_t}{2}\NRM{\nabla f(\xx_t)-\GG_t}^2\,,
	\end{align*}
	as long as $\gamma_t\leq 1/(2L)$.
	We thus again need to upperbound the bias $\NRM{\nabla f(\xx_t)-\GG_t}^2$. 
    For $t\geq \tilde\tau_\cov$, we have $\GG_t= \frac{1}{n} \sum_{v\in\cV} \nabla f_v(\xx_{d_v(t)})$, and so:
	\begin{align*}
		\NRM{\nabla f(\xx_t)-\GG_t}^2\leq\frac{1}{n} \sum_{v\in\cV}\NRM{\nabla f_v(\xx_{d_v(t)})-\nabla f_v(\xx_t)}^2\,,
	\end{align*}
    hence the result of Lemma~\ref{lem:descent_sag} holds for $t\leq \tilde\tau_\cov$ (and so does that of the Lemma we are proving).
    Then, for $t\leq\tilde\tau_\cov$, $\nabla f(\xx_t)-\GG_t=\frac{1}{n}\sum_{v\in\cV}\nabla f_v(\xx_t)-\hh_v^t$, where $\hh_v^t=\hh_v$ if $v$ hasn't been visited yet, or $\hh_v^t=\nabla f_v(\xx_{d_v(t)})$ otherwise. Hence,
    \begin{align*}
		\NRM{\nabla f(\xx_t)-\GG_t}^2&\leq\frac{1}{n} \sum_{v\in\cV}\NRM{\hh_v^t-\nabla f_v(\xx_t)}^2\\
        &\leq \frac{2}{n} \sum_{v\in\cV}\NRM{\nabla f_v(\xx_{d_v(t)})-\nabla f_v(\xx_t)}^2 + \frac{2}{n} \sum_{v\in\cV}\NRM{\nabla f_v(\xx_0)-\hh_v}^2
	\end{align*}
    Since $\NRM{\nabla f_v(\xx_t)-\nabla f_v(\xx_{d_{v}(t)})}^2\leq L^2\NRM{\sum_{s=d_{v}(t)}^{t-1}\gamma_s\GG_s}^2$, we have:
	\begin{align*}
		f(\xx_{t+1})-f(\xx_t)&\leq -\frac{\gamma_t}{2}\NRM{\nabla f(\xx_t)}^2 -\frac{\gamma_t}{4}\NRM{\GG_t}^2 + \frac{\gamma_t L^2}{n} \sum_{v\in\cV}\NRM{\sum_{s=d_{v}(t)}^{t-1}\gamma_s\GG_s}^2 + \frac{\gamma_t}{n}\sum_{v\in\cV}\NRM{\nabla f_v(\xx_0)-\hh_v}^2\,,
	\end{align*} 
    leading to the desired result.
\end{proof}

\begin{proof}[Proof of Theorem~\ref{thm:mcSAG_varying}.\ref{thm:mcSAG_varying_arbitrary}]
    Starting from
    \begin{align*}
		f(\xx_{t+1})-f(\xx_t)&\leq -\frac{\gamma_t}{2}\NRM{\nabla f(\xx_t)}^2 -\frac{\gamma_t}{4}\NRM{\GG_t}^2 + \frac{\gamma_t L^2}{n} \sum_{v\in\cV}\NRM{\sum_{s=d_{v}(t)}^{t-1}\gamma_s\GG_s}^2 + \frac{\gamma_t\one_{t<\tilde\tau_\cov}}{n}\sum_{v\in\cV}\NRM{\nabla f_v(\xx_0)-\hh_v}^2\,,
	\end{align*}
    and mimicking the proof with initialization, we obtain that for $\gamma_t^{-1}=4L(\tau_{\rm hit} + \sup_{v\in\cV} (t-d_v(t)))$, we have
    \begin{equation*}
        \esp{\min_{t<T}\NRM{\nabla f(\xx_t)}^2}\leq \frac{2F_0}{T^2}\sum_{t<T}\esp{\gamma_t^{-1}} + \frac{2\esp{\tilde\tau_\cov}}{T}\frac{1}{n}\sum_{v\in\cV}\NRM{\nabla f_v(\xx_0)-\hh_v}^2\,,
    \end{equation*}
    and we conclude the proof by noticing that $\esp{\tilde\tau_\cov}\leq\tau_\cov$.
\end{proof}

\section{Numerical illustration of our theory}

\label{sec:exp}

\paragraph{Setting} We place ourselves in the decentralized optimization setting on a graph $G$ with local functions $f_v$, and build two toy problems.
We compare our algorithms MC-SGD and MC-SAGA with Walkman \citep{walkman} and decentralized SGD (D-SGD in Figure~\ref{fig:exp}) \citep{pmlr-v119-koloskova20a,lian2019decentralizedmomemtum} with both randomized gossip communications and fixed gossip matrix.
We consider the non-convex loss function $\ell(\xx,a,b)=(\sigma(\xx^\top a)-b)^2/2$ where $\sigma(t)=1/(1+\exp(-t))$ as in \citet{nonconvexloss}. 
For $v\in\cV$, we take $f_v(x)=\ell(\xx,a_v,b_v)$ for $a_v$ and $b_v$ random variables.
In Figure~\ref{fig:homogeneous}, we take a connected random geometric graph of $n=50$ nodes in $[0,1]^2$ with radius parameter $\rho=0.3$ (nodes are connected if their distance is less than $\rho$).
We consider homogeneous data: $a_v,b_v$ are taken \emph{i.i.d.}, with $a_v\sim\cN(0,1)$ and $b_v$ uniform in $[0,1]$.
In Figure~\ref{fig:heterogeneous}, we take the cycle of $n=50$ nodes and consider heterogeneous data: for two opposite nodes in the graph ($v=0$ and $v=25$ \emph{e.g.}), $a_v$ are $\cN(0,1)$ and $b_v$ uniform in $[0,1]$ (and the functions are re-normalized), while they are taken equal to $0$ in the rest of the graph.
In Figure~\ref{fig:homogeneous}, the graph is well connected and Assumption~\ref{hyp:dissimilarities} is verified for a small enough $\bar\sigma^2$, so that all algorithms perform comparatively well; MC-SGD because of the homogeneity, the gossip-based ones and Walkman thanks to the connectivity.
However, decreasing the connectivity and increasing data heterogeneity in a pathological way, we obtain Figure~\ref{fig:heterogeneous}: MC-SGD fails due to function-heterogeneity ($\bar{\sigma}^2$ too big), while the three others are slowed down by their communication inefficiency, illustrating how depending only on $\tau_\hit$, MC-SAGA outperforms the other algorithms, that rather depend on $n\tau_\mix$ or $n\tau_\mix^2$.

\begin{figure}[h]
    %\hfill
    \subfigure[Geometric graph, homogeneous functions]{
        \includegraphics[width=0.45\linewidth]{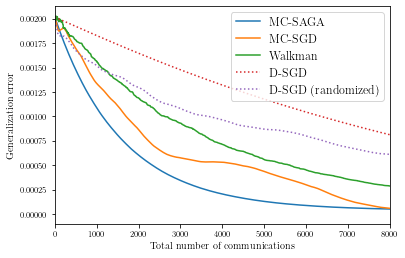}
        \label{fig:homogeneous}
    }
    \hfill
    \subfigure[Cycle graph, heterogeneous functions]{
        \includegraphics[width=0.45\linewidth]{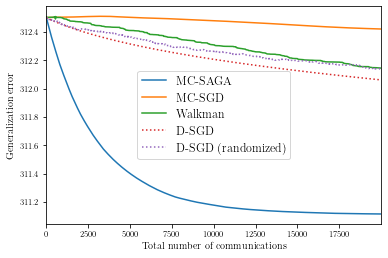}
        \label{fig:heterogeneous}
    }
    %\hfill
    %\vspace{-14pt}
    \caption{Comparison of MC-SAGA and MC-SGD with existing algorithms. In abscissa are the total number of communications, and in ordinate $f(\xx)-f(\xx^\star)$.\label{fig:exp}
    }
    \end{figure}

\end{document}